\newtheorem{thm}{Theorem}[]
\numberwithin{thm}{section}
\newtheorem{defn}[thm]{Definition}
\newtheorem{cor}[thm]{Corollary}
\newtheorem{lem}[thm]{Lemma}
\newtheorem{prop}[thm]{Proposition}
\newtheorem{rmk}[thm]{Remark}
\newcommand{\Fq}{\mathbb{F}_q}
\newcommand{\Pp}{\mathbb{P}}
\newcommand{\Ovd}{\mathcal{O}}
\newcommand{\PG}{\operatorname{PG}}
\newcommand{\ltf}{line-transverse-free}
\newcommand{\ptf}{plane-transverse-free}
\newcommand{\ktf}{$k$-flat-transverse-free}
\newcommand{\ctf}{circle-transverse-free}
\newcommand{\spf}{space-filling}
\newcommand{\SSFS}{\operatorname{SSFS}}
\title{Transverse-freeness in Finite Geometries}
\author{Charlie Bruggemann}
\address{Reed College}
\email{cabruggemann@reed.edu}
\author{Vera Choi}
\address{Tufts University}
\email{vera.choi@tufts.edu}
\author{Brian Freidin}
\address{Auburn University}
\email{bgf0012@auburn.edu}
\author{Jaedon Whyte}
\address{Massachusetts Institute of Technolog}
\email{jnw@mit.edu}
\subjclass[2020]{Primary 14N25; Secondary 14N20, 51E21, 51E23}
\keywords{transverse-free, finite fields, projective space, inversive plane, hyperbolic plane}
\begin{document}

\begin{abstract}
    We study projective curves and hypersurfaces defined over a finite field that are tangent to every member of a class of low-degree varieties. Extending 2-dimensional work of Asgarli, we first explore the lowest degrees attainable by smooth hypersurfaces in $n$-dimensional projective space that are tangent to every $k$-dimensional subspace, for some value of $n$ and $k$. We then study projective surfaces that serve as models of finite inversive and hyperbolic planes, finite analogs of spherical and hyperbolic geometries. In these surfaces we construct curves tangent to each of the lowest degree curves defined over the base field.
\end{abstract}

\maketitle

\section{Introduction}

A classical theorem of Bertini implies that a smooth projective curve defined over an infinite field admits a transverse line, i.e. one that is not tangent to the curve. In \cite{poonen}, Poonen showed that this result fails to hold when everything is defined over a finite field $\Fq$. That is, there are transverse-free curves (ie. curves tangent to every line defined over the (finite) base field) of any sufficiently high degree, c.f. Example 2.3 of \cite{transverse-free}. In \cite{transverse-free}, Asgarli also establishes a tight lower bound on the degrees of such curves, showing that the degree of a smooth transverse-free plane curve is at least $q+2$. Later, in \cite{freidin-asgarli}, Asgarli and Freidin employed combinatorial and linear algebraic techniques to describe the set of all transverse-free curves.

In this paper, we extend the study of transverse-free objects to other settings. These include notions of transverse-free hypersurfaces in higher dimensional projective spaces, as well as curves within a specific projective surface, with or without an additional symmetry. In the context of projective hypersurfaces we explore the different properties of various notions of transverse-freeness, as well as show that lower-dimensional examples can be easily extended to give higher-dimensional examples. In the context of curves within a surface we exploit the algebraic descriptions of curves and tangencies within surfaces in order to demonstrate the existence of low-degree transverse-free curves.

Section~\ref{sec:prelim} is devoted to defining the spaces, along with symmetries, and the objects and properties of study therein. The spaces include projective spaces, a quadric surface $\Ovd$ in the 3-dimensional projective space, as well as an involution $*$ preserving $\Ovd$. The intersection of $\Ovd$ with planes of $\Pp^3$ are called circles. Following \cite{dembowski} and \cite{tits}, the rational points and circles in $\Ovd$ have the incidence structure of a finite inversive plane, an analog of spherical geometry. When $q$ is even, following \cite{hyperbcrowe}, some rational points and $*$-invariant circles have the incidence structure of a finite hyperbolic plane, an analog of hyperbolic geometry.

In projective spaces we will describe hypersurfaces and tangency with $k$-dimensional linear subspaces, which we will refer to as $k$-flats. Within the surface $\mathcal{O}$ we will describe curves and tangency with circles. With all of these definitions we will be able to define the transverse-free objects  that are the focus of this work. In the projective space these are hypersurfaces tangent to every $k$-flat, for some $k$, and in $\Ovd$ these are curves tangent to every circle, or every $*$-invariant circle if we are discussing hyperbolic curves.

In Section~\ref{ndimproj} we study hypersurfaces in higher dimensional projective spaces. Here the notions of being \ktf\ for different $k$ do not always coincide. In particular, we study smooth, \spf\ hypersurfaces (c.f. Definition~\ref{spf}). Such objects only exist with degrees $d\geq q+1$. For hypersurfaces of degree $d=q+1$ and $d=q+2$, we study when these \spf\ hypersurfaces are \ktf\, and for which $k$. Our main results in this section can be summarized as follows.

\begin{thm}[cf. \ref{qp1NeverLTF}, \ref{qp1AlwaysPTF}, \ref{qp2AlwaysLTF}, \ref{qp2NeverPTF}]
    No degree $q+1$ smooth, \spf\ surface in $\Pp^3$ is \ltf, while for odd $n$ and even $k$, every degree $q+1$ smooth \spf\ hypersurface in $\Pp^n$ is \ktf. Every degree $q+2$ smooth, \spf\ hypersurface in $\Pp^n$ is \ltf, while  for odd $q$ no degree $q+2$ smooth, \spf\ hypersurface in $\Pp^3$ is \ptf.
\end{thm}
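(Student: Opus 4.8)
The plan is to treat all four assertions through one reduction followed by one structural observation. For a smooth hypersurface $X=V(F)\subset\Pp^n$ and a $k$-flat $\Lambda$, smoothness of $X$ forces every singular point of the scheme $X\cap\Lambda$ to be a point of tangency, so $\Lambda$ is transverse to $X$ exactly when the hypersurface $V(F|_\Lambda)\subset\Lambda\cong\Pp^k$ is smooth. Since $X$ is \spf, $F|_\Lambda$ vanishes at every $\Fq$-point of $\Pp^k$, and a direct analysis of the vanishing ideal of $\Pp^k(\Fq)$ (dehomogenize, use $I(\mathbb{A}^k(\Fq))=(x_0^q-x_0,\dots,x_{k-1}^q-x_{k-1})$, and track degrees) shows that every form of degree $q+1$ vanishing on $\Pp^k(\Fq)$ lies in the span of the binomials $g_{ij}=x_ix_j^q-x_i^qx_j$, and every form of degree $q+2$ vanishing on $\Pp^2(\Fq)$ lies in the degree-$(q+2)$ component of the ideal they generate. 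This yields normal forms $F|_\Lambda=\sum_{i,j}a_{ij}x_ix_j^q$ with $A=(a_{ij})$ an alternating (skew-symmetric, zero-diagonal) matrix of scalars when $\deg F=q+1$, and $F|_\Lambda=\sum_{i,j}B_{ij}(x)x_ix_j^q$ with $B=(B_{ij})$ an alternating matrix of linear forms when $\deg F=q+2$. Because the $x_j^q$-derivative vanishes in characteristic $p$, evaluating at an $\Fq$-point $p\in\Lambda$ gives $\partial_l(F|_\Lambda)(p)=(Ap)_l$, respectively $(M(p)p)_l$ with $M(p)=(B_{ij}(p))$ again alternating; so $X\cap\Lambda$ is singular at an $\Fq$-point precisely when that point, as a vector, lies in $\ker A$, respectively in $\ker M(p)$.

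For Statements 1 and 3 the flat $\Lambda=\ell$ is a line, so $F|_\ell$ is a binary form of degree $q+1$ (resp. $q+2$) vanishing at the $q+1$ points of $\ell(\Fq)$, or else $\ell\subseteq X$ and $\ell$ is trivially tangent. In degree $q+1$ the zero divisor of $F|_\ell$ is exactly the reduced sum of those $q+1$ points, so $X\cap\ell$ is reduced and $\ell$ is transverse; since a smooth surface of degree $q+1\ge 3$ in $\Pp^3$ contains only finitely many lines while $\Pp^3$ has $(q^2+1)(q^2+q+1)$ lines defined over $\Fq$, some rational line misses $X$ and $X$ is not \ltf. In degree $q+2$ the zero divisor of $F|_\ell$ has degree $q+2$, contains the $q+1$ rational points, and its residual degree-$1$ part is a single $\Fq$-point of $\Pp^1$, hence one of those same points; so $F|_\ell$ has a double root and $\ell$ is tangent to $X$, proving $X$ is \ltf.

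For Statement 2, $\Lambda$ is a $k$-flat with $k$ even, so $A$ is alternating of odd size $k+1$; since the rank of an alternating matrix is even over any field, $\ker A\ne 0$ and contains a nonzero vector defined over $\Fq$ — for instance the vector of signed Pfaffian minors — so $X\cap\Lambda$ has an $\Fq$-rational singular point and $\Lambda$ is tangent. As $\Lambda$ was arbitrary, $X$ is \ktf. (The hypothesis that $n$ be odd serves only to guarantee that such $X$ exist: globally $\partial_lF=\sum_j a_{lj}x_j^q$ has no common zero exactly when the $(n+1)\times(n+1)$ alternating matrix $A$ is invertible, which requires $n+1$ even.)

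Statement 4 is the crux: we must \emph{exhibit} a rational plane $\Pi\subset\Pp^3$ with $X\cap\Pi$ smooth. Writing $F|_\Pi=\sum_{i,j}B_{ij}(x)x_ix_j^q$, the $3\times 3$ matrix $M(p)=(B_{ij}(p))$ is alternating, so $M(p)p=0$ iff $p$ is parallel to the Pfaffian vector $v(p)=(B_{12}(p),-B_{02}(p),B_{01}(p))$; as $v(p)=N_\Pi p$ for the coefficient matrix $N_\Pi$ of the linear forms $B_{ij}$, this says $X\cap\Pi$ is singular at an $\Fq$-point $p$ precisely when $p$ is an eigenvector of $N_\Pi$ over $\Fq$. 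Thus $\Pi$ carries no rational point of tangency exactly when $N_\Pi$ has no eigenvalue in $\Fq$, i.e. when its characteristic cubic is irreducible over $\Fq$ (with a supplementary check that $X\cap\Pi$ is smooth at non-rational points too). The plan is then to understand the family $\Pi\mapsto N_\Pi$ over $(\Pp^3)^\vee$ and to produce a rational $\Pi$ whose characteristic cubic is irreducible; here the hypotheses enter — smoothness of $X$ makes the Gauss map $p\mapsto T_pX$ finite, and for $q$ odd the Hessian of $F$ is a symmetric (rather than, as for $q$ even, alternating) matrix of forms, which one expects to be generically nondegenerate, forcing the Gauss map to be separable so that the dual surface $X^\vee\subset(\Pp^3)^\vee$ cannot equal all of $(\Pp^3)^\vee$. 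Making this quantitative — ruling out, when $q$ is odd, that every rational plane section of $X$ is singular, equivalently that $X^\vee$ is \spf, equivalently that $\Pi\mapsto\operatorname{charpoly}(N_\Pi)$ always lands among reducible cubics — is the main obstacle; I would expect to resolve it either by an explicit choice of $\Pi$ attached to a fixed irreducible cubic over $\Fq$, or by bounding $\#X^\vee(\Fq)$ against $\#(\Pp^3)^\vee(\Fq)=q^3+q^2+q+1$ with the parity of $q$ used to exclude the degenerate case.
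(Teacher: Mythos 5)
Your reduction to normal forms modulo the vanishing ideal of $\Pp^k(\Fq)$ is sound, and it carries two of the four claims. For the second claim your argument is correct and genuinely different from the paper's: the paper simply cites the nonexistence of smooth space-filling degree-$(q+1)$ hypersurfaces in even-dimensional projective space (Proposition 14 of the Homma--Kim-bound paper), applied to each section $S\cap\pi$, whereas you reprove that fact on each $k$-flat by observing that the alternating matrix $A$ of odd size $k+1$ has a rational kernel vector, which is an $\Fq$-rational singular point of the section. That is a cleaner, self-contained route. The third claim you prove essentially as the paper does: the residual linear factor of $F|_\ell$ is defined over $\Fq$, hence vanishes at one of the $q+1$ rational points and produces a double root (the paper phrases the same fact via Frobenius conjugates of a hypothetical lone non-rational intersection point).

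The other two parts have real gaps. For the first claim, after correctly showing that every rational line not contained in $X$ is transverse, you conclude by saying a smooth surface of degree $q+1\geq 3$ contains ``only finitely many lines'' while $\Pp^3$ has $(q^2+1)(q^2+q+1)$ rational lines. Both numbers are finite, so finiteness alone proves nothing; you would need an explicit upper bound on the number of lines on a smooth degree-$(q+1)$ surface that is smaller than $q^4+q^3+2q^2+q+1$, and such bounds are delicate in positive characteristic (Hermitian-type surfaces of degree $d$ already carry on the order of $(d-1)^4$ lines, the same order of magnitude). The paper avoids this entirely with an elementary incidence count: every tangent line at $P$ lies in the single tangent plane $T_PS$, which contains only $q+1$ rational lines through $P$, so at most $q^3+q^2+q+1$ lines can be tangent, fewer than the total. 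For the fourth claim you set up the right objects (the alternating matrix of linear forms, the Pfaffian kernel, the induced linear map $N_\Pi$) but then explicitly defer the decisive step --- exhibiting a rational plane with no rational eigenvector, plus the smoothness check at non-rational points --- to future work; this is the heart of the statement and cannot be left as an expectation. The paper's route is worth comparing: a cited result shows that a tangent plane to a degree-$(q+2)$ space-filling surface must be tangent at a \emph{rational} point, so being \ptf\ would force a bijection between the $q^3+q^2+q+1$ rational points and the equally many rational planes via $P\mapsto T_PS$; restricting the gradient map to a line contained in $S$ (such lines exist by another incidence count) then gives a quadratic map $\Pp^1(\Fq)\to\Pp^1(\Fq)$ whose bijectivity would force $k_0x^2+k_1y^2=0$ to have a unique projective solution, impossible for odd $q$ since its solutions come in pairs. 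Without something playing the role of that bijectivity obstruction, the fourth claim remains unproved.
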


Compare the above to \cite{asgarli-lines} and \cite{asgarli-subspaces}, which establish transversals to hypersurfaces of sufficiently low degree compared with $q$. While there is still some gap between our degree bounds and theirs, we also only address smooth, \spf\ hypersurfaces. It is possibly important, then, that \cite{asgarli-lines} notes that smooth degree $q+2$ \spf\ hypersurfaces in $\Pp^3$ are not known to exist for all $q$.

Then in Section~\ref{sec:cones} we construct \ktf\ hypersurfaces in high dimensional projective spaces as cones over \ktf\ hypersurfaces in lower-dimensional projective spaces. While these cones are not smooth, we can state very precisely where the singularities occur. In particular, extending a smooth hypersurface in $\Pp^n$ to $\Pp^{n+1}$ creates exactly one singular point. In this way we can construct many examples of hypersurfaces satisfying various notions of transverse-freeness from known examples. In particular starting from transverse-free curves in the plane produces many \ltf\ hypersurfaces in arbitrary dimensions.

Finally we construct transverse-free curves within the surface $\Ovd$. In section~\ref{sec:ctf} we construct curves tangent to every circle of $\Ovd$, which we call \ctf\ inversive curves, and in Section~\ref{sec:hyp} we construct curves invariant under $*$ that are tangent to every $*$-invariant circle, which we call \ltf\ hyperbolic curves. In each case one approach to construct transverse-free curves follows the strategy of \cite{freidin-asgarli}. We pair each circle in our ambient surface with a distinct point at which a curve can be made tangent to that circle. While the curves constructed by this approach are not guaranteed to be smooth, we hope they at least have no singularities at rational points. This approach relies on the combinatorics of partial spreads in the incidence structure of the finite projective space, in particular pairwise disjoint sets of lines that are all tangent to a fixed surface as constructed in \cite{spread}. Our results here can be summarized as follows.

\begin{thm}[cf. Theorems~\ref{linctf}, \ref{hypltf}]
    Over $\Fq$, there exist polynomials of all degrees $d>-1+\sqrt{2(q^2+1)}$ defining transverse-free curves in $\Ovd$. There also exist $*$-invariant polynomials of all degrees $d>-3/2+\sqrt{14q^2+10q-95/4}$ defining transverse-free hyperbolic curves.
\end{thm}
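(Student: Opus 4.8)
The plan is to adapt the linear-algebraic framework of \cite{freidin-asgarli}, encoding the condition of being tangent to every circle as a system of linear conditions on the coefficients of a degree $d$ form on $\Ovd$ and then overwhelming it with a dimension count. First I would assemble the incidence data of the elliptic quadric $\Ovd=V(Q)\subset\Pp^3$: it carries $q^2+1$ rational points; through a rational point $P$ there are exactly $q+1$ tangent lines, namely the lines through $P$ in the tangent plane $T_P\Ovd=\ker dQ_P$, and exactly $q^2+q$ circles, and these circles fall into $q+1$ pencils of $q$ circles each, a pencil consisting of all circles with a prescribed tangent line at $P$; there are $q^3+q$ circles in total. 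On the algebraic side, restriction of degree $d$ forms on $\Pp^3$ to $\Ovd$ is surjective (since $H^1(\Pp^3,\mathcal{O}_{\Pp^3}(d-2))=0$) with kernel the multiples of $Q$, so the space of degree $d$ curves on $\Ovd$ has dimension $\binom{d+3}{3}-\binom{d+1}{3}=(d+1)^2$.

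Next I would observe that, for a rational point $P\in\Ovd$, a tangent line $\ell$ of $\Ovd$ at $P$, and a direction vector $v$ of $\ell$, the requirements $F(P)=0$ and $dF_P(v)=0$ are linear and well defined on this $(d+1)^2$-dimensional space (the former because $Q(P)=0$, the latter because $v\in\ker dQ_P$), and that any $F$ not divisible by $Q$ satisfying both defines a curve on $\Ovd$ that at $P$ is either singular or has tangent line $\ell$, hence is not transverse to any of the $q$ circles in the pencil through $P$ with tangent $\ell$. So a single (point, direction) pair eliminates an entire pencil, and the combinatorial core is to choose one tangent direction $\ell_P$ at every point $P$ so that every circle lies in one of the chosen pencils. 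Since each choice kills $q$ circles while $q(q^2+1)=q^3+q$ is exactly the number of circles, such a choice must be an exact cover; I would produce it from the spread of $\PG(3,q)$ by lines tangent to $\Ovd$ supplied by \cite{spread}, taking $\ell_P$ to be the unique spread line through $P$. No circle is killed twice, because the tangent lines of a circle at two of its points both lie in the plane of that circle and so must meet, contradicting the disjointness of two distinct spread lines (and a single line tangent to $\Ovd$ cannot pass through two points of $\Ovd$). Imposing the resulting $2(q^2+1)$ conditions on the $(d+1)^2$-dimensional space leaves a nonzero solution as soon as $(d+1)^2>2(q^2+1)$, that is, $d>-1+\sqrt{2(q^2+1)}$, and any such $F$ defines a \ctf\ inversive curve; this argument does not control smoothness. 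The one genuinely substantial ingredient is the existence of a tangent-line spread with the exact-cover property; everything else is bookkeeping.

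For the hyperbolic statement, with $q$ even, I would rerun the argument inside the subspace of $*$-invariant degree $d$ forms on $\Ovd$, with three modifications. First, I would compute the dimension of this invariant subspace, which is roughly half of $(d+1)^2$ up to a correction term determined by the fixed locus of $*$, since it is this dimension, rather than $(d+1)^2$, that must beat the number of conditions. Second, because a $*$-invariant $F$ is determined by its values on $*$-orbits, the tangency conditions at a non-fixed point $P$ and at $P^*$ coincide and count only once per orbit, while $*$-fixed points must be handled separately. Third, one needs a $*$-invariant analogue of the tangent-line spread that still covers every $*$-invariant circle, furnished by the $*$-invariant partial spreads of \cite{spread} together with the incidence data of the finite hyperbolic plane from \cite{hyperbcrowe}. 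Carrying out the orbit count and the fixed-point analysis precisely and comparing with the invariant dimension yields the stated bound, equivalently $(d+3/2)^2>14q^2+10q-95/4$. As in the inversive case, the crux, and the main obstacle, is establishing the existence of the $*$-invariant family of tangent lines realizing the required cover; once that is available, the remainder is purely a dimension count.
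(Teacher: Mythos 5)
The inversive half of your proposal is essentially the paper's own argument: the same tangent-line spread from \cite{spread}, the same observation that each (point, tangent direction) pair handles a pencil of $q$ circles and that disjointness of the spread lines prevents double-covering, and the same dimension count $(d+1)^2 > 2(q^2+1)$ modulo multiples of the quadric. (One caveat you inherit silently: the tangent-line spread of $q^2+1$ lines exists if and only if $q$ is even, so this half carries a parity hypothesis that the paper makes explicit in its Theorem~\ref{linctf}.)

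The hyperbolic half has a genuine gap. Your plan is to find ``a $*$-invariant analogue of the tangent-line spread that still covers every $*$-invariant circle,'' i.e.\ to cover all hyperbolic lines by tangencies at rational points. This cannot work, for a counting reason the paper flags explicitly: there are $q^2-1$ hyperbolic lines but only $(q^2-q)/2$ point-orbits $\{P,P^*\}$ off the base circle, and --- crucially --- a tangency condition at a rational point $P$ with tangent direction $L_P$ forces tangency to only \emph{one} hyperbolic line, not a pencil of $q$ of them, because among the $q$ circles of $\Ovd$ sharing the tangent line $L_P$ at $P$ exactly one is $xw$-symmetric. So rational tangencies can account for at most $(q^2-q)/2$ of the $q^2-1$ hyperbolic lines, and no choice of invariant spread closes that deficit. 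The paper's actual proof covers the remaining $(q^2+q-2)/2$ hyperbolic lines by imposing tangencies at carefully chosen points of degree $6$ over $\Fq$ (degree $2$ points are unusable since two circles can share a quadratic point; each degree-$6$ tangency costs $2\cdot 6=12$ linear conditions), which is precisely where the constant $14q^2+10q-24$ in the inequality $d^2+3d+2>14q^2+10q-24$ comes from. Your sketch has no mechanism producing these non-rational tangency conditions, so the ``orbit count and fixed-point analysis'' you defer to cannot yield the stated bound; it would at best prove tangency to the $(q^2-q)/2$ hyperbolic lines reachable from rational points.
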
 

Another approach to construct transverse-free curves is to cosndier curves whith singularities that form a blocking set, a set of points that intersects every circle or line in the space. This approach requires results on blocking sets in finite inversive planes from \cite{blocking} and our hyperbolic generalizations thereof. Our results along these lines are as follows.

\begin{thm}[cf. Theorems~\ref{invsingdeg}, \ref{hypsingdeg}]
    For all $d>-1 + \sqrt{3\left(q^2+1-\sqrt[q+1]{\binom{q^2}{q}(q-1)!}\right)}$ there is a degree $d$ polynomial defining a transverse-free curve in $\Ovd$. And for all $d>-3+\sqrt{12q^2-12q+1-24\sqrt[q/2]{\binom{q(q-1)/2}{q/2}\frac{(q/2)!}{q^2-1}}}$ there is a degree $d$, $*$-invariant polynomial defining a transverse-free hyperbolic curve.
\end{thm}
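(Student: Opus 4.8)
\emph{The plan} is to realize these curves as divisors on $\Ovd$ that carry a prescribed singular point on every circle (resp.\ every $*$-invariant circle), and to produce such divisors by a dimension count. Call a set $B$ of $\Fq$-points of $\Ovd$ a \emph{blocking set} for a family of circles if it meets every circle in the family. The key observation is that if a curve $C\subset\Ovd$ is singular at every point of a blocking set $B$, then $C$ is transverse-free: because $\Ovd$ is a smooth elliptic quadric, every circle $\gamma$ is a smooth conic, so if $\gamma$ passes through a point $p\in B$ then $(C\cdot\gamma)_p\geq 2$, i.e.\ $\gamma$ is tangent to (or contained in) $C$. Hence it suffices, for each $d$ past the stated threshold, to produce a nonzero degree-$d$ form on $\Ovd$ vanishing to order $\geq 2$ at every point of a sufficiently small blocking set; such a form has a genuine curve as its zero locus, and if that curve has degree $<d$ we multiply by a power of a linear form (adding only plane-section components, and keeping every singularity and tangency) to reach degree exactly $d$. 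These curves are of course highly singular, which is unavoidable for the method and irrelevant to transverse-freeness.

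\emph{The dimension count and the inversive bound.} Degree-$d$ forms on $\Ovd$ --- degree-$d$ forms on $\Pp^3$ modulo the quadric --- span a space of dimension $\binom{d+3}{3}-\binom{d+1}{3}=(d+1)^2$, and imposing a singular point at a fixed point of the smooth surface $\Ovd$ is at most $3$ linear conditions (the value and the two tangential first derivatives in a local chart). So if $(d+1)^2>3|B|$ there is a nonzero degree-$d$ form singular along $B$. For the combinatorial input I would invoke the bound of \cite{blocking}, which also follows from a union bound: the number of $m$-subsets of the $q^2+1$ points of the inversive plane that contain some circle is at most $q(q^2+1)\binom{q^2-q}{m-q-1}$, which is less than $\binom{q^2+1}{m}$ as soon as $m(m-1)\cdots(m-q)<\binom{q^2}{q}(q-1)!$, and in particular once $m<\sqrt[q+1]{\binom{q^2}{q}(q-1)!}$; complementing such a circle-free set yields a blocking set of size at most $q^2+1-\sqrt[q+1]{\binom{q^2}{q}(q-1)!}$. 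Therefore $d>-1+\sqrt{3\bigl(q^2+1-\sqrt[q+1]{\binom{q^2}{q}(q-1)!}\bigr)}$ forces $(d+1)^2>3|B|$, which gives the first assertion.

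\emph{The hyperbolic bound.} The curve must now be $*$-invariant. I would first obtain a blocking set $B_0$ for the $*$-invariant circles, symmetrize it to $B=B_0\cup B_0^{\,*}$ (so $|B|\leq 2|B_0|$), find a degree-$d_0$ form $F_0$ singular along $B$ with $(d_0+1)^2>3|B|$, and set $F=F_0\cdot(F_0\circ *)$: this is $*$-invariant of degree $2d_0$ and still singular along $B\supseteq B_0$, hence defines a transverse-free hyperbolic curve, with $*$-invariant linear factors filling in the remaining degrees. For $B_0$ I would establish the hyperbolic analogue of \cite{blocking} by running the same union bound over the incidence structure of Crowe's hyperbolic plane --- $\binom q2$ points and $q^2-1$ lines, each of size $q/2$ --- obtaining a line-free set of size $\sqrt[q/2]{\binom{q(q-1)/2}{q/2}\frac{(q/2)!}{q^2-1}}$ and hence $|B_0|\leq\binom q2-\sqrt[q/2]{\binom{q(q-1)/2}{q/2}\frac{(q/2)!}{q^2-1}}$. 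Writing $d=2d_0$, the requirement $(d_0+1)^2>3\cdot 2|B_0|$ rearranges (after absorbing off-by-one terms into the additive constants) to $d>-3+\sqrt{12q^2-12q+1-24\sqrt[q/2]{\binom{q(q-1)/2}{q/2}\frac{(q/2)!}{q^2-1}}}$.

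\emph{The main obstacle} is the combinatorial input in the hyperbolic case: proving the analogue of the blocking-set estimate of \cite{blocking} for $*$-invariant circles. Unlike the inversive plane --- whose points are \emph{all} of $\Ovd(\Fq)$ and whose circles are all plane sections --- the hyperbolic plane occupies a proper subset of $\Ovd(\Fq)$ and its lines are the distinguished $*$-invariant circles, so one must first pin down its incidence parameters and then run the line-free/union-bound estimate carefully; the messier constants in the final bound are the price of this together with the degree doubling forced by symmetrizing a non-invariant form. A secondary point to check is that the symmetrization $F_0\mapsto F_0\cdot(F_0\circ *)$ does not degenerate --- for instance if $F_0$ happened to be $*$-invariant already, in which case one uses $F_0$ directly or perturbs it inside the $*$-invariant subsystem, whose dimension is read off from the action of $*$ on forms (equivalently, from the quotient surface $\Ovd/\langle *\rangle$).
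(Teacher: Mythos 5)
Your treatment of the inversive case is essentially the paper's: take a blocking set of the stated size, charge at most three linear conditions per imposed singularity, and compare against the multiples of $\varphi$ via $\binom{d+3}{3}-\binom{d+1}{3}=(d+1)^2>3|\mathcal{B}|$. Your union-bound derivation of the blocking-set size is a valid, self-contained substitute for the citation of \cite{blocking}, and the analogous union bound over the hyperbolic incidence structure does recover the bound of Theorem~\ref{hypblock}, which the paper instead proves by a greedy/averaging argument; either route supplies the same combinatorial input.

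The gap is in the hyperbolic construction. Symmetrizing a non-invariant form via $F=F_0\cdot(F_0\circ *)$ costs you twice: the degree doubles, and you must impose singularities at both points of each conjugate pair, i.e.\ $6|B_0|$ conditions rather than $3|B_0|$. Your requirement $(d_0+1)^2>6|B_0|$ with $d=2d_0$ unwinds to $d>-2+\sqrt{24|B_0|}$, which is \emph{strictly larger} than the stated threshold $-3+\sqrt{1+24|B_0|}$ (since $\sqrt{A+1}<\sqrt{A}+1$), so the ``off-by-one terms'' cannot be absorbed in the direction you need; moreover the method directly produces only even degrees, and padding by $*$-invariant linear factors to reach odd degrees makes the threshold worse still. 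The paper avoids all of this by working from the outset in the space $\mathcal{H}(d)$ of $xw$-symmetric degree-$d$ forms --- the route you mention only parenthetically in your last sentence. There a singularity imposed at $P$ is automatic at $P^*$, so one needs only $3|\mathcal{B}|$ linear conditions, and the relevant comparison is with $xw$-symmetric multiples of $\varphi$, using $\dim\mathcal{H}(d)-\dim\mathcal{H}(d-2)=(d^2+3d+2)/2$; this yields $d>-3+\sqrt{1+24|\mathcal{B}|}$ exactly as stated. Replacing your symmetrization step by this direct count in $\mathcal{H}(d)$ closes the gap.
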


While these last results may be hard to parse, Remarks~\ref{inv-simp} and \ref{hyp-simp} give the simpler-looking bounds of $d>-1+3\sqrt{q\ln(q)}$ and $d>-3+\sqrt{1+48q\ln(q)}$, respectively.

With the linear algebra we use to derive the results above, a curve with some number of tangencies can always be constructed with lower degree than a curve with the same number of singularities. On the other hand, a transverse-free curve may require fewer singularities than tangency conditions. The result of this balancing act is that the singular curves of Theorems~\ref{invsingdeg} and \ref{hypsingdeg} occur with lower degrees than the potentially non-singular curves of Theorems~\ref{linctf} and \ref{hypltf}. Moreover the degrees of our transverse-free curves are still bounded by linear functions. This is consistent with the degree $q+2$ bounds of \cite{transverse-free} in the projective plane, even while our notions of transverse-freeness are different in each setting.

\subsection*{Acknowledgements} This material is based upon work supported by the National Science Foundation under Grant Number DMS-1851842, and conducted at the MathILy-EST 2023 REU under the supervision of the third author. Any opinions, findings, and conclusions or recommendations expressed in this material are those of the author(s) and do not necessarily reflect the views of the National Science Foundation.

\section{Preliminaries}\label{sec:prelim}

For the rest of this paper, fix a prime power $q$ and the corresponding finite field $\Fq$. We begin by describing the ambient spaces in which our objects of study can be found. We will focus on algebraic descriptions of these spaces, pointing out the combinatorial properties and connections to finite geometries as they come up.

Each space will be described ultimately in terms of vectors in some vector space $\overline{\Fq}^k$ over the algebraic closure of the field $\Fq$. We will often distinguish $\Fq$-points (those that can be realized with coordinates in $\Fq$) from non-$\Fq$ points (those with some coordinate in a non-trivial extension of $\Fq$) . Other objects (lines, planes, curves, etc.) will also be treated algebraically in terms of defining polynomials. We will only deal with polynomials defined over $\Fq$ (i.e. by polynomials with coefficients in $\Fq$) but we still understand them to contain non-$\Fq$ points. To that end, the notation $X(\Fq)$ denotes just the $\Fq$-points of the object $X$.

There is rich algebraic geometry in the connections between the polynomials we will study and geometric objects. We will, however, phrase everything in terms of the polynomials themselves.

\subsection{Projective space}

The $n$-dimensional projective space $\Pp^n$ is the quotient $\big(\overline{\Fq}^{n+1}\backslash\{0\}\big)/\overline{\Fq}^\times$, where $\overline{\Fq}^\times$ acts by scaling. We may represent points in $\overline{\Fq}^{n+1}$ by tuples $(x_0,x_1,\ldots,x_n)$, or their images in $\Pp^n$ with colons as $(x_0:x_1,\cdots:x_n)$.

For a non-constant homogeneous polynomial $f\in\Fq[x_0,\ldots,x_n]$, we will refer to the hypersurface $S_f$ as the image in $\Pp^n$ of the set
\[
    \{P\in\overline{\Fq}^{n+1} \mid f(P)=0\}.
\]
For $\lambda\neq 0$ the hypersurfaces $S_f$ and $S_{\lambda f}$ coincide, so we treat the polynomials $f$ and $\lambda f$ as equivalent. If the polynomial $f$ has degree $d$, we say $S_f$ \textbf{has degree $d$}, and we call $S_f$ a \textbf{degree-$d$ hypersurface}. In $\Pp^3$ we will refer to a hypersurface $S_f$ as a surface. And in $\Pp^2$ we will refer to a hypersurface $S_f$ as a projective curve, to distinguish from inversive and hyperbolic curves defined below.

If $f$ is a linear function, we call the hypersurface $S_f$ a \textbf{hyperplane}. The next simplest kind of object we will intersect with hypersurfaces are what we call $k$-flats, which immediately generalize hyperplanes.
\begin{defn}[$k$-flat]
    For $1\leq k\leq n-1$, a $k$-flat $\pi\subset\Pp^n$ is the intersection of $n-k$ hyperplanes whose defining linear polynomials $L_1,\ldots,L_{n-k}$ are linearly independent in $\Fq[x_0,\ldots,x_n]$.
\end{defn}

When $k=1$ a $k$-flat is simply called a line. When $k=2$ a $k$-flat is simply called a plane. Recall that all functions, and thus all $k$-flats, are defined over $\Fq$.

\begin{rmk}
    The $\Fq$-points, $\Pp^n(\Fq)$, together with the lines of $\Pp^n$ as defined above, have the incidence structure of a finite projective space, known as $\PG(n, q)$.
\end{rmk}

We say a hypersuface $S_f$ is \textbf{smooth} at a point $P\in S_f$ corresponding to $\tilde{P}\in\overline{\Fq}^{n+1}$ if the gradient
\[
    \nabla f(\tilde{P}) = \left(\frac{\partial f}{\partial x_0}(\tilde{P}),\ldots,\frac{\partial f}{\partial x_n}(\tilde{P})\right)
\]
does not vanish. Note that for homogeneous $f$, each $\frac{\partial f}{\partial x_j}$ is also homogeneous, so $\nabla f(\tilde{P})=0$ if and only if $\nabla f(\lambda\tilde{P})=0$ for $\lambda\neq 0$, so the choice of $\tilde{P}$ does not matter. In case $S_f$ is not smooth at $P$, we say $S_f$ is \textbf{singular} at $P$, or that $P$ is a \textbf{singularity} of $S_f$. We say $S_f$ is smooth if it is smooth at every $P\in S_f$.

When a hypersurface is smooth at a point, we can define its tangent space at that point as a hyperplane.

\begin{defn}[Tangent space]
    Let $S=S_f\subset\Pp^n$ be a hypersurface defined by a homogeneous polynomial $f$ and let $P\in S$ be a point. Suppose $\tilde{P}\in\overline{\Fq}^{n+1}$ corresponds to $P\in\Pp^n$. If $S_f$ is smooth at $P$ then the tangent space to $S$ at $P$, denoted $T_PS$, is the hyperplane $\pi\subset\Pp^n$ defined by the linear function
    \[
        \frac{\partial f}{\partial x_0}(P)x_0 + \cdots + \frac{\partial f}{\partial x_n}(P)x_n = 0.
    \]
    
    Note for $\lambda\neq 0$, that $\nabla f(\lambda\tilde{P})$ is a scalar multiple of $\nabla f(\tilde{P})$, so the functions listed above for $\tilde{P}$ and for $\lambda\tilde{P}$ define the same hyperplane.

    If $S_f$ is singular at $P\in S_f$ we define $T_PS_f = \Pp^n$.
\end{defn}

In general for a $k$-flat $\pi\subset\Pp^n$, we say $S_f$ is tangent to $\pi$ at $P$ if $\pi\subset T_PS_f$. More generally, we will say two smooth hypersurfaces, $S_f$ and $S_g$, are tangent at $P\in\Pp^n$ if $P\in S_f\cap S_g$ and $T_PS_f=T_PS_g$. This is equivalent to saying $f(P)=g(P)=0$ and there is a non-zero scalar $\lambda\in\Fq$ so that $\nabla f(P)=\lambda\nabla g(P)$.

Since all the computations we do in coordinates (e.g. smoothness and tagngent plane above) do not depend on the point $\tilde{P}\in\overline{\Fq}^{n+1}$ we choose corresponding to a point $P\in\Pp^n$, we will often ease notation by simply referring to $f(P)$ or $\nabla f(P)$, even though $f$ and $\nabla f$ are really only defined on $\overline{\Fq}^{n+1}$.

It will be useful later on to have some information about duality in projective spaces.

\begin{defn}[Dual]
    The dual of $\Pp^n$ is the projective space $(\Pp^n)^*$ where the hyperplane $\pi\subset\Pp^n$ defined by the function $A_0x_0+A_1x_1+\cdots+A_nx_n$ corresponds to a point $\pi^*=(A_0:A_1:\ldots:A_n)\in(\Pp^n)^*$ and a point $P=(P_0:P_1:\cdots:P_n)\in\Pp^n$ corresponds to a hyperplane $P^*\subset(\Pp^n)^*$ defined by the function $P_0x_0+P_1x_1+\cdots+P_nx_n$.
\end{defn}

In general, a $k$-flat $\pi\subset\Pp^n$ cut out by the hyperplanes $\pi_i$, $1\leq i\leq n-k$, corresponds to the unique $(n-k)$-flat $\pi^*\subset(\Pp^n)^*$ containing the points $\pi^*_1,\ldots,\pi^*_{n-k}$. One can thus see that for flats $\pi_1,\pi_2\subset\Pp^n$, $\pi_1\subset\pi_2$ if and only if $\pi^*_1\supset\pi^*_2$.

Let $\pi_q(k,n)$ be the number of $k$-flats in $\Pp^n$ defined over $\Fq$. By counting the number of linearly independent ordered $k$-tuples in $\Fq^{n+1}$, and dividing by the number of ordered bases in $\Fq^{k+1}$, one sees that
\[
    \pi_q(k,n) = \prod_{i=0}^k\frac{q^{n+1-i}-1}{q^{k+1-i}-1}.
\]

Exploiting the relationship between the incidence structures of $\Pp^n$ and $(\Pp^n)^*$ we also have the following result.

\begin{lem}\label{duallem}
    In $\Pp^n$, the number of $k$-flats that contain a fixed $\ell$-flat $\pi_0$ is equal to $\pi_q(n-k,n-\ell)$
\end{lem}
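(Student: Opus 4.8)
The plan is to derive this from the inclusion-reversing duality recorded after the definition of the dual, reducing the problem to counting the flats contained in a single fixed flat. Assume $1\le\ell\le k\le n-1$; if instead $\ell>k$ then no $k$-flat contains $\pi_0$, and the product formula for $\pi_q$ likewise evaluates to $0$, so that case is immediate. The map $\pi\mapsto\pi^*$ is a bijection from the $k$-flats of $\Pp^n$ to the $(n-k)$-flats of $(\Pp^n)^*$ that reverses inclusion, so it restricts to a bijection between the set of $k$-flats $\pi\subseteq\Pp^n$ with $\pi_0\subseteq\pi$ and the set of $(n-k)$-flats $\sigma\subseteq(\Pp^n)^*$ with $\sigma\subseteq\pi_0^*$. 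Since $\pi_0$ is an $\ell$-flat, its dual $\pi_0^*$ is an $(n-\ell)$-flat. Hence the quantity we want equals the number of $(n-k)$-flats contained in a fixed $(n-\ell)$-flat of an $n$-dimensional projective space over $\Fq$.

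The one substantive step is then the following: \emph{a fixed $d$-flat $\Lambda\subseteq\Pp^N$ contains exactly $\pi_q(m,d)$ flats of dimension $m$.} To prove this I would make an $\Fq$-linear change of coordinates on $\overline{\Fq}^{N+1}$ --- possible over $\Fq$ since $\Lambda$, like every flat, is defined over $\Fq$ --- carrying the $(d+1)$-dimensional $\Fq$-subspace underlying $\Lambda$ onto the span of the first $d+1$ coordinate vectors, so that $\Lambda$ becomes the common zero locus of $x_{d+1},\dots,x_N$ and is thereby identified with a copy of $\Pp^d$ over $\Fq$. Because the projective dimension of a flat is one less than the dimension of its underlying linear subspace, an intrinsic quantity, the $m$-flats of $\Pp^N$ contained in $\Lambda$ are precisely the $m$-flats of this $\Pp^d$, of which there are $\pi_q(m,d)$. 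Applying this with $N=n$, $d=n-\ell$, and $m=n-k$ gives the claimed count $\pi_q(n-k,n-\ell)$.

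I expect the only real obstacle to be the bookkeeping in the second step: one must check that ``an $m$-flat of $\Pp^N$ that happens to lie inside $\Lambda$'' and ``an $m$-flat of $\Lambda$, viewed as a projective space in its own right'' describe the same objects, equivalently that restricting and extending defining linear forms along the inclusion $\overline{\Fq}^{\,d+1}\hookrightarrow\overline{\Fq}^{\,N+1}$ is compatible with the definition of a flat and preserves dimension. Once this identification is set up, the duality reduction is immediate, and no manipulation of the product formula for $\pi_q$ is needed.
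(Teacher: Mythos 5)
Your proof is correct and follows essentially the same route as the paper: dualize to convert ``$k$-flats containing $\pi_0$'' into ``$(n-k)$-flats contained in the $(n-\ell)$-flat $\pi_0^*$,'' then identify $\pi_0^*$ with a copy of $\Pp^{n-\ell}$ and count. You simply spell out the final identification (via a coordinate change) and the degenerate case $\ell>k$ in more detail than the paper does.
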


\begin{proof}
    A $k$-flat $\pi$ of $\Pp^n$ contains the $\ell$-flat $\pi_0$ if and only if the $(n-k)$-flat $\pi^*$ is contained in the $(n-\ell)$-flat $\pi^*_0$ within $(\Pp^n)^*$. Thus the number of $k$-flats containing $\pi_0$ is precisely the number of $(n-k)$-flats contained within $\pi^*_0$. Since $\pi^*_0$ is isomorphic to $\Pp^{n-\ell}$, the number of $(n-k)$ flats it contains is precisely $\pi_q(n-k,n-\ell)$.
\end{proof}

\subsection{Quadric surfaces and Inversive planes}\label{sec:inv}

Let $\gamma\in\Fq$ be such that $t^2+t+\gamma$ is irreducible over $\Fq$. Let $\Ovd$ be the surface $\Ovd=S_\varphi\subset\Pp^3$ defined by the polynomial
\[
    \varphi(x,y,z,w) = xw-y^2-yz-\gamma z^2.
\]
Note that $\nabla\varphi(x,y,z,w) = (w,-2y-z,-y-2\gamma z,x)$ does not vanish at any non-zero vector $(x,y,z,w)$, so $\Ovd$ is a smooth surface in $\Pp^3$.

Given a polynomial $f\in\Fq[x,y,z,w]$, we refer to the curve $C_f\subset\Ovd$ as the intersection of the surface $S_f$ with $\Ovd$, namely
\[
    C_f = S_f\cap\Ovd.
\]
If $\pi$ is a plane in $\Pp^3$ not tangent to $\Ovd$, i.e. $\pi=S_\ell$ for a linear function $\ell$, then we call the curve $C_\ell$ a \textbf{circle}.

The points of $\Ovd$ can also be described parametrically. In addition to $(1:0:0:0)$, the remaining points all have the form
\[
    (s^2+st+\gamma t^2:s:t:1)
\]
for $(s,t)\in\overline{\Fq}^2$.

From \cite{tits} the parametric description of $\Ovd$ above verifies it as an elliptic quadric surface, an ovoid. In particular, the irreducibility of $t^2+t+\gamma$ verifies that no three points of $\Ovd$ lie on an $\Fq$-line, so every line of $\Pp^3$ intersects $\Ovd$ in either 0, 1, or 2 points. According to \cite{dembowski}, the $\Fq$-points of $\Ovd(\Fq)$, together with the circles, form an \textbf{inversive plane}, a finite analog of spherical geometry.

The axioms and properties of finite inversive planes that we will find most useful are summarized here. See e.g. \cite{dembowski} for more details.
\begin{itemize}
    \item Every three points of $\Ovd(\Fq)$ lie in exactly one circle.
    \item If $C$ is a circle containing $P\in\Ovd(\Fq)$ but not $Q\in\Ovd(\Fq)$, there is exactly one circle $C'$ containing $P$ and tangent to $C$ at $Q$.
    \item $\Ovd(\Fq)$ contains $q^2+q$ points and $q(q^2+1)$ circles.
    \item Every circle contains exactly $q+1$ $\Fq$-points., $q(q+1)$ circles pass through each $\Fq$-point, and $q+1$ circles pass through each pair of $\Fq$-points.
\end{itemize}

We will thus refer to curves in $\Ovd$ as \textbf{inversive curves}.

We say the curve $C_f$ is smooth at a point $P\in C_f$ if $S_f$ is smooth at $P$ and $T_P S_f \neq T_P\Ovd$, or more simply put, if $T_P\Ovd\not\subset T_PS_f$. We say $C_f$ is smooth if it is smooth at every $P\in C_f$. With this we can also define tangent lines to smooth curves as projective lines tangnet to $\Ovd$.

\begin{defn}[Tangent line]\label{tangline}
    Let $C = C_f\subset\Ovd$ be a curve and $P\in C$ a point. If $C$ is smooth at $P$ then the tangent line to $C$ at $P$ is the line
    \[
        T_PC \colonequals T_PS_f\cap T_P\Ovd \subset\Pp^3.
    \]
    
    If $C$ is not smooth at $P$, we still define $T_PC = T_PS_f\cap T_P\Ovd$, but in this case it is just $T_P\Ovd$.
\end{defn}

For a line $L\subset T_P\Ovd$, we say a curve $C_f$ is tangent to $L$ at $P$ if $L\subset T_PC_f$. In particular this applies to the tangent line of a circle. In contrast to $\Pp^n$ where distinct hyperplanes passing have distinct tangent spaces, since each hyperplane is its own tangent space, we will show that many circles in $\Ovd$ can share a tangent line.

\begin{lem}\label{circtangency}
    Two circles, $C,C'\subset\Ovd$, are tangent at a point $P$ if and only if $C\cap C' = \{P\}$.
\end{lem}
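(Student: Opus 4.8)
The plan is to trade the condition ``$C$ and $C'$ are tangent at $P$'' for a statement about how the line $\pi\cap\pi'$ meets the ovoid, and then read off the equivalence from the combinatorics of $\Ovd$. Write $C = \Ovd\cap\pi$ and $C' = \Ovd\cap\pi'$, where $\pi = S_\ell$ and $\pi' = S_{\ell'}$ are planes not tangent to $\Ovd$; since a smooth conic spans a unique plane, the hypothesis $C\neq C'$ forces $\pi\neq\pi'$, so $m\colonequals\pi\cap\pi'$ is an $\Fq$-line. I would first record that every circle is a smooth conic: because $\pi$ is not tangent to $\Ovd$ we have $\pi\neq T_Q\Ovd$ for all $Q\in C$, and the smoothness criterion for curves in $\Ovd$ then shows $C$ is smooth at every point, and likewise for $C'$. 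Hence $T_PC = T_PS_\ell\cap T_P\Ovd = \pi\cap T_P\Ovd$ and $T_PC' = \pi'\cap T_P\Ovd$ are genuine lines, and unwinding the definitions, ``$C$ and $C'$ are tangent at $P$'' means precisely that $P\in C\cap C'$ and $T_PC = T_PC'$.

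The geometric heart of the argument is the identity
\[
    C\cap C' = (\Ovd\cap\pi)\cap(\Ovd\cap\pi') = \Ovd\cap(\pi\cap\pi') = \Ovd\cap m .
\]
By the ovoid property of $\Ovd$, the line $m$ meets $\Ovd$ in $0$, $1$, or $2$ points, and in particular $m\not\subseteq\Ovd$. In both directions of the lemma we are given $P\in C\cap C' = \Ovd\cap m$, so $\Ovd\cap m$ is nonempty, and it equals $\{P\}$ exactly when $m$ meets $\Ovd$ only at $P$; since $m\not\subseteq\Ovd$, its intersection with $\Ovd$ is a degree-$2$ divisor, and this divisor is supported at the single point $P$ precisely when $m$ is tangent to $\Ovd$ at $P$, i.e. when $m\subseteq T_P\Ovd$. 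Thus $C\cap C' = \{P\}$ if and only if $m\subseteq T_P\Ovd$.

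It remains to check the purely linear-algebraic equivalences $m\subseteq T_P\Ovd\iff m = T_PC\iff T_PC = T_PC'$, using only that two distinct planes in $\Pp^3$ meet in a line and that a line contained in another line coincides with it. If $m\subseteq T_P\Ovd$, then since always $m\subseteq\pi$ we get $m\subseteq\pi\cap T_P\Ovd = T_PC$, hence $m = T_PC$; conversely $T_PC = \pi\cap T_P\Ovd\subseteq T_P\Ovd$. Next, if $m = T_PC$ then $m\subseteq T_P\Ovd$ together with $m\subseteq\pi'$ gives $m\subseteq\pi'\cap T_P\Ovd = T_PC'$, hence $m = T_PC' = T_PC$; conversely if $T_PC = T_PC'$, this common line lies in $\pi\cap\pi' = m$ and so equals $m$, giving $m = T_PC$. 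Chaining the equivalences of this paragraph with the one of the previous paragraph yields the lemma.

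I do not expect a serious obstacle: the content is elementary incidence geometry in $\Pp^3$ once the ovoid property is in hand. The points that need care are (i) confirming that circles are smooth, so that their tangent lines are honest lines rather than all of $T_P\Ovd$; (ii) correctly unpacking the chain of definitions that produces ``$C$ and $C'$ are tangent at $P$'' from the notion of a curve being tangent to a line; and (iii) excluding the degenerate case $C = C'$, where the stated equivalence would otherwise fail. None of these is deep.
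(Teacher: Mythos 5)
Your proof is correct and follows essentially the same route as the paper's: both reduce the statement to the line $m=\pi\cap\pi'$ and the fact that an $\Fq$-line not contained in $\Ovd$ meets it in at most two points, with tangency at $P$ equivalent to $m\subseteq T_P\Ovd$. Your write-up is in fact the more complete of the two, since the paper only spells out the forward implication (tangent $\Rightarrow$ single intersection point), whereas you verify both directions and check the smoothness of circles explicitly.
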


\begin{proof}
    As tangent circles, $C\cap C'$ must not be empty, and cannot have more than 2 points, else their defining planes would intersect in a line that intersects $\Ovd$ in more than 2 points, an impossibility.
    
    But $T_PC = T_PC'$, which must be the intersection of their defining planes and lie within $T_P\Ovd$. Thus this line cannot intersect $\Ovd$ again, so $C$ and $C'$ cannot intersect again.
\end{proof}

\begin{lem}\label{bundletan}
    At any smooth $\Fq$-point $P$ of a curve $C_f\subset\Ovd$, there are $q$ circles tangent to $C_f$. These circles are tangent to each other at $P$ and their union contains every $\Fq$-point in the ovoid $\Ovd$. 
\end{lem}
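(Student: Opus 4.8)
The plan is to work entirely in the parametric coordinates for $\Ovd$. Fix a smooth $\Fq$-point $P$ of $C_f$. After applying a projective transformation preserving $\Ovd$ (the group acts transitively on $\Ovd(\Fq)$), we may assume $P$ is the special point $(1:0:0:0)$, or alternatively we may keep $P = (s_0^2+s_0 t_0 + \gamma t_0^2 : s_0 : t_0 : 1)$ and argue symmetrically; I would normalize to $P=(1:0:0:0)$ for concreteness. The tangent plane $T_P\Ovd$ is then the plane $w=0$ (read off from $\nabla\varphi(1,0,0,0)=(0,0,0,1)$), and the tangent line $T_PC_f = T_PS_f\cap T_P\Ovd$ is a specific $\Fq$-line $L$ lying in this plane and passing through $P$. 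By Lemma~\ref{circtangency}, a circle $C'$ is tangent to $C_f$ at $P$ exactly when $P\in C'$, $C'$ is smooth at $P$, and $T_PC' = L$; and two such circles are automatically tangent to each other at $P$, again by Lemma~\ref{circtangency}.

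First I would count the circles through $P$ with a prescribed tangent line $L\subset T_P\Ovd$. A circle is $C_\ell = S_\ell\cap\Ovd$ for a linear $\ell$ with $S_\ell$ not tangent to $\Ovd$; the condition $P\in C_\ell$ forces $\ell(P)=0$, and the condition $T_PC_\ell = L$ forces $S_\ell$ to contain the line $L$ (since $T_PC_\ell = S_\ell \cap T_P\Ovd$ when $S_\ell$ is not tangent at $P$). The $\Fq$-planes containing the fixed $\Fq$-line $L$ form a pencil of size $q+1$; exactly one of them is $T_P\Ovd$ itself, which is tangent to $\Ovd$ and hence excluded. This leaves $q$ planes, each meeting $\Ovd$ in a genuine circle through $P$ with tangent line $L$. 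That accounts for the count of $q$ circles tangent to $C_f$ at $P$, and pairwise tangency at $P$ is immediate from Lemma~\ref{circtangency} since any two of these circles meet $\Ovd$ only at $P$ (their defining planes intersect in $L$, which meets $\Ovd$ only at $P$).

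It remains to show the union of these $q$ circles covers all of $\Ovd(\Fq)$. The $q$ excluded-free planes through $L$, together with $T_P\Ovd$, are all $q+1$ planes containing $L$, so every $\Fq$-point of $\Pp^3$ lies on one of these $q+1$ planes; in particular every $\Fq$-point $R$ of $\Ovd$ lies on some plane $\pi$ through $L$. If $\pi\neq T_P\Ovd$ then $R$ lies on the corresponding circle and we are done; if $\pi = T_P\Ovd$, then $R\in \Ovd\cap T_P\Ovd$. But a tangent plane to an ovoid meets it in exactly one point (the line $L$ already meets $\Ovd$ only at $P$, and any other line of $T_P\Ovd$ through a second point $R$ of $\Ovd$ would be a secant line contained in a tangent plane, forcing that line to meet $\Ovd$ in $\leq 2$ points — consistent — so I instead use: $\Ovd\cap T_P\Ovd = \{P\}$ because $\Ovd$ is an elliptic quadric, a standard ovoid fact, or verify directly in coordinates that $\varphi = 0, w = 0$ forces $y^2+yz+\gamma z^2 = 0$, hence $y=z=0$ by irreducibility of $t^2+t+\gamma$). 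So the only $\Fq$-point of $\Ovd$ on $T_P\Ovd$ is $P$ itself, which lies on every one of the $q$ circles. Hence every $\Fq$-point of $\Ovd$ lies on at least one of the $q$ circles, proving the union contains $\Ovd(\Fq)$.

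The main obstacle is bookkeeping rather than depth: I must be careful that each of the $q$ planes through $L$ other than $T_P\Ovd$ is genuinely \emph{not} tangent to $\Ovd$ (so that $C_\ell$ is actually a circle and not a degenerate intersection) and that the resulting circle is smooth at $P$ with the correct tangent line $L$. This follows because $\Ovd$ has exactly one tangent plane at each of its points and $P$ is the unique point of $\Ovd$ on any such plane through $L$; a plane $\pi\ni L$ with $\pi\neq T_P\Ovd$ is therefore not the tangent plane at $P$, and it is not tangent at any other point of $\Ovd$ either, since tangency at $R\in\pi\cap\Ovd$, $R\neq P$, would make $\pi = T_R\Ovd$ meet $\Ovd$ only at $R$, contradicting $P\in\pi\cap\Ovd$. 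With that verified, smoothness of $C_\ell$ at $P$ and $T_PC_\ell = \pi\cap T_P\Ovd = L$ are automatic.
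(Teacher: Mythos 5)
Your proof is correct and follows essentially the same route as the paper: take the pencil of $q+1$ planes through the tangent line $L=T_PC_f$, discard $T_P\Ovd$, and observe that the remaining $q$ planes cut out circles mutually tangent at $P$. The only differences are cosmetic — you finish the covering claim by noting every point of $\Ovd(\Fq)$ lies on some plane of the pencil and that $\Ovd(\Fq)\cap T_P\Ovd=\{P\}$, whereas the paper counts $q(q+1-1)+1=q^2+1$ points directly; and you explicitly verify that the $q$ non-tangent planes really are non-tangent, a detail the paper leaves implicit.
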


\begin{proof}
    Because $P$ is smooth, we can construct the tangent line $L=T_PC_f$ as in Definition~\ref{tangline}. There are $q+1$ planes containing $L$, of which one is $T_P\Ovd$. Each of the remaining $q$ planes intersects $\Ovd$ in a circle.
    
    Each of the $q$ circles considered above is defined by a plane containing $L\subset T_P\Ovd$ and so $L$ is its tangent line at $P$. Thus all of these $q$ circles are tangent to each other at $P$. By Lemma~\ref{circtangency} they share no more intersection points. Each of these $q$ circles contains $q+1$ points, of which one is $P$ itself. Thus their union contains $q^2+1$ $\Fq$-points, which is all of $\Ovd(\Fq)$.
\end{proof}

\subsection{Involutions and Hyperbolic planes}\label{hyperbolic algebra} 

For this subsection, and for Section~\ref{sec:hyp} that relies on this one, we restrict our attention to even values of $q$.

Define an involution $*\colon\Pp^3\to\Pp^3$ by
\[
    (x:y:z:w)^* = (w:y:z:x).
\]
The fixed points $(x:y:z:w)$ of $*$ are those that satisfy $x=w$. But since $q$ is even, $x=w$ is the same as $x=-w$, so the fixed points of $*$ are precisely the points on the plane defined by the polynomial $b(x,y,z,w)=x+w$. To that end, fix the circle $B = C_b$ in $\Ovd$ as a `base' circle.

We will call a polynomial $f\in\Fq[x,y,z,w]$ \textbf{$xw$-symmetric} if $f(x,y,z,w) = f(w,y,z,x)$. The corresponding surface $S_f\subset\Pp^3$ is also called \textbf{$xw$-symmetric}. One can see that $xw$-symmetric surfaces are invariant under $*$, that is $P^*\in S_f$ if and only if $P\in S_f$.

Crucially, $\Ovd$ is $xw$-symmetric since $\Ovd$ is defined by the $xw$-symmetric polynomial $\varphi(x,y,z,w) = xw-y^2-yz-\gamma z^2$. For $xw$-symmetric functions $f$, we will also call the corresponding curves $C_f$ $xw$-symmetric. In particular, the circle $B$ is $xw$-symmetric.

In \cite{hyperbcrowe}, models for finite hyperbolic planes are constructed from inversive planes. The crucial point we will use from that paper is the following.
\begin{lem}[\cite{hyperbcrowe}]\label{crowe}
    Let $IG(2,q)$ be an inversive plane of even order $q$. If $P$ is a point not on the circle $C$, then the circles containing $P$ and tangent to $C$ are exactly the $q+1$ circles containing $P$ and some other point $P^*$. We call the pair $P,P^*$\textit{ hyperbolic conjugates}.
\end{lem}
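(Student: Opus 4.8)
The plan is to pass to a projective model of the inversive plane and exploit the even-characteristic fact that an oval has a \emph{nucleus} through which all of its tangent lines pass. Since $q$ is even, a finite inversive plane of order $q$ is egg-like (see \cite{dembowski}), so we may take $IG(2,q)$ to be the inversive plane of an ovoid $\Ovd\subset\PG(3,q)$; indeed, for this paper it suffices to take for $\Ovd$ the quadric of Section~\ref{sec:inv}. Then circles are the sections $\Ovd\cap\pi$ by planes meeting $\Ovd$ in more than one point, and by Lemma~\ref{circtangency} a circle $\Ovd\cap\pi_D$ is tangent to $C=\Ovd\cap\pi_C$ at a point $Q\in C$ exactly when $\pi_D\cap\pi_C$ is the tangent line $T_QC=T_Q\Ovd\cap\pi_C$ of $C$ at $Q$. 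The aim is to produce a single line $m\subset\PG(3,q)$ that is forced to lie in the cutting plane of \emph{every} circle through $P$ tangent to $C$, and then to let $P^*$ be the second point in which $m$ meets $\Ovd$.

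The key input is the nucleus. In the plane $\pi_C\cong\PG(2,q)$ the circle $C$ is an oval with one tangent line at each of its $q+1$ points; because $q$ is even these $q+1$ tangent lines are concurrent at a point $N\notin C$, the nucleus of $C$, and since there are exactly $q+1$ lines of $\pi_C$ through $N$, \emph{every} line of $\pi_C$ through $N$ is a tangent line of $C$ — so no secant and no external line of $C$ passes through $N$. For the quadric $\Ovd$ one can even write $N$ down: if $\pi_C=\{ax+by+cz+dw=0\}$, then $N=(d:c:b:a)$ is the radical inside $\pi_C$ of the alternating bilinear form attached to $\varphi$.

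Next I would fix $P\in\Ovd\setminus\pi_C$ and set $m=\langle P,N\rangle$, a well-defined $\Fq$-rational line since $P\notin\pi_C\ni N$, and show $m$ is a secant of $\Ovd$. As $P\in m\cap\Ovd$, it is enough to rule out $m\subset T_P\Ovd$, i.e.\ to show $N\notin T_P\Ovd$. But $T_P\Ovd\cap\pi_C$ is a line of $\pi_C$ whose points of $\Ovd$ all lie in $T_P\Ovd\cap\Ovd=\{P\}$, which is disjoint from $\pi_C$; hence $T_P\Ovd\cap\pi_C$ is external to the oval $C$ and therefore avoids $N$. Thus $m\cap\Ovd=\{P,P^*\}$ with $P^*\neq P$, and since $m$ is $\Fq$-rational with the $\Fq$-point $P$ on it, $P^*$ is also an $\Fq$-point; moreover $m\cap\pi_C=\{N\}$ with $N\notin\Ovd$, so $P^*\notin C$.

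Finally I would match the two families. For each $Q\in C$, the unique circle $D_Q$ through $P$ tangent to $C$ at $Q$ is cut by the plane $\langle T_QC,P\rangle$ (note $P\notin T_QC$ because $P\notin T_Q\Ovd$), which contains $N\in T_QC$ and $P$, hence contains $m$ and so $P^*$; thus $P^*\in D_Q$. Conversely, any circle through $P$ and $P^*$ is cut by a plane $\pi_D\neq\pi_C$ containing $m$, so $\pi_D\cap\pi_C$ is a line through $N$, hence a tangent line $T_QC$, and the circle is tangent to $C$ at $Q$. Therefore the circles through $P$ tangent to $C$ are precisely the circles through $P$ and $P^*$, of which there are exactly $q+1$ by the axioms of Section~\ref{sec:inv}; this is the assertion, with $\{P,P^*\}$ the pair of hyperbolic conjugates. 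I expect the real content to be the secant claim — specifically the two observations that $T_P\Ovd\cap\pi_C$ is an external line of $C$ and that external lines miss the nucleus — since the rest is bookkeeping resting on Lemma~\ref{circtangency} and the inversive-plane axioms; a purely combinatorial count gets close (each candidate second point lies on an odd, hence positive, number of the $q+1$ tangent circles through $P$, with total excess $q$ over the all-ones distribution), but forcing one point to carry all of that excess really seems to need the egg-like structure that the nucleus packages, so getting the reduction to the ovoid model right is the other thing to be careful about.
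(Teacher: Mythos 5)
Your proof is correct. Note that the paper does not actually prove this lemma --- it is quoted verbatim from \cite{hyperbcrowe} --- so there is no internal argument to compare against; what you have written is essentially the classical proof underlying Crowe's construction, specialized to the quadric model. The two load-bearing facts are exactly the ones you isolate: (i) for $q$ even the $q+1$ tangents of the oval $C\subset\pi_C$ concur at a nucleus $N$, so the lines of $\pi_C$ through $N$ are precisely the tangents of $C$ (for the conic $C=\Ovd\cap\pi_C$ your explicit $N=(d:c:b:a)$ checks out against the alternating form $B(u,v)=x_1w_2+x_2w_1+y_1z_2+y_2z_1$ of $\varphi$ in characteristic $2$, and $N\notin\Ovd$ precisely because $\pi_C$ is a secant plane); and (ii) $N\notin T_P\Ovd$ because $T_P\Ovd\cap\pi_C$ is an external line of $C$, so $m=\langle P,N\rangle$ is a secant and $P^*$ exists, is $\Fq$-rational, and avoids $C$. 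The bijection between tangency points $Q\in C$ and the circles through $\{P,P^*\}$ then follows from Lemma~\ref{circtangency} and the incidence axioms as you say, and your closing caveat is well placed: the parity count alone shows every second point lies on at least one tangent circle through $P$ but cannot force a single point to absorb the whole excess, so the nucleus (equivalently, the egg-like structure) is genuinely needed. If this argument were to be included in the paper, the only things worth adding are a citation for Qvist's concurrency theorem (or the one-line conic computation you sketch) and the observation, already made in Section~\ref{hyperbolic algebra}, that for $\pi_C=S_b$ the point $P^*$ produced here agrees with the involution $(x:y:z:w)^*=(w:y:z:x)$.
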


In our case, $\Ovd(\Fq)$ is a model of an inversive plane, and $q$ is even. Any circle $C\subset\Ovd$ that is tangent to $B$ at a point $Q$ must be defined by a linear combination of the functions defining $B$ and $T_Q\Ovd$. Since both of those defining functions are $xw$-symmetric, so is the circle $C$. So if $C$ is a circle tangent to $B$ and containing a point $P\not\in B$, then $P^*\in C$ as well. That is, the $P^*$ defined in terms of the involution $*$ is the same as the $P^*$ in Lemma~\ref{crowe}.

A finite hyperbolic plane can thus be constructed using pairs $\{P,P^*\}$ for $P\in\Ovd(\Fq)\backslash B$ as points, and $xw$-symmetric circles (aside from $B$) as lines. The axioms and properties of finite hyperbolic planes that we will find most useful are summarized here. See e.g. \cite{hyperbcrowe} for more details.
\begin{itemize}
    \item Every pair of points in $(\Ovd\backslash B)/*$ determines a unique line, i.e. $xw$-symmetric circle containing the corresponding points in $\Ovd$.
    \item There are $\frac{q^2-q}{2}$ points in $(\Ovd\backslash B)/*$, and $q^2-1$ lines.
    \item There are $q/2$ points on each line, and $q+1$ lines through each point.
    \item Given a line $L$ and a point $P\not\in L$, there are $\frac{q}{2}+1$ lines containing $P$ and not intersecting $L$.
\end{itemize}

We thus refer $xw$-symmetric curves in $\Ovd$ as \textbf{hyperbolic curves}. Since every hyperbolic curve can be viewed also as an inversive curve, we can find a projective line in $\Pp^3$ tangent to the curve. To define the hyperbolic tangent line we must find a hyperbolic line, i.e. an $xw$-symmetric curcle, that is also tangent to that projective line when viewed as an inversive circle. In particular, the tangent line to a hyperbolic line should be the line itself.

Given a hyperbolic curve $C_f$ and a smooth point $P\in\Ovd\backslash B$, the projective line $L = T_PC_f = T_PS_f\cap T_P\Ovd$ from Definitions~\ref{tangline} is tangent to both $S_f$ and $\Ovd$ at $P$. The inversive circles through $P$ and tangent to $B$ are all $xw$-symmetric, and no pair of them are tangent to each other. Thus precisely one of them has the tangent line $L$.

\begin{defn}[Hyperbolic tangent line]\label{hyptangline}
    Let $C = C_f\subset\Ovd$ be a hyperbolic curve defined by an $xw$-symmetric function $f$, and $P\in C\backslash B$ a point. If $C$ is smooth at $P$ then the hyperbolic tangent line to $C$ at $P$ is the unique $xw$-symmetric circle $C_\ell$ such that $T_PS_\ell\cap T_P\Ovd = T_PS_f\cap T_P\Ovd$.
    
    If $C$ is not smooth at $P$, we define $T_PC = \Ovd$.
\end{defn}

\subsection{Transverse-freeness}

We begin with a very useful lemma for understanding tangencies to hypersurfaces.

\begin{lem}\label{cbez}
    The intersection set of a $k$-flat $\pi$ and a hypersurface $S_f$ of degree $d > 1$ defined over $\Fq$ is either $\pi$ itself or can be identified with a hypersurface of degree $d$ in $\Pp^k$.
\end{lem}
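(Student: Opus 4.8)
The statement is essentially a restriction-of-scalars / change-of-coordinates argument. A $k$-flat $\pi\subset\Pp^n$ is, by definition, cut out by $n-k$ linearly independent linear forms $L_1,\dots,L_{n-k}\in\Fq[x_0,\dots,x_n]$. I would first extend $\{L_1,\dots,L_{n-k}\}$ to a basis $\{L_1,\dots,L_{n-k},M_0,\dots,M_k\}$ of the space of linear forms over $\Fq$ — possible precisely because the $L_i$ are linearly independent. This linear change of coordinates is invertible over $\Fq$, hence induces an automorphism of $\Pp^n$ defined over $\Fq$ carrying $\pi$ to the standard $k$-flat $\{x_{k+1}=\dots=x_n=0\}$, whose points are those of the form $(y_0:\dots:y_k:0:\dots:0)$ and which is canonically identified with $\Pp^k$ via $(y_0:\dots:y_k)$.

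**Key steps, in order.** (1) Perform the coordinate change above so that WLOG $\pi=\{x_{k+1}=\dots=x_n=0\}\cong\Pp^k$. (2) Observe that the points of $\pi$ lying on $S_f$ are exactly those $(y_0:\dots:y_k)\in\Pp^k$ with $f(y_0,\dots,y_k,0,\dots,0)=0$; that is, the intersection is $S_g\subset\Pp^k$ where $g(y_0,\dots,y_k)\colonequals f(y_0,\dots,y_k,0,\dots,0)$ is the restriction of $f$ obtained by setting the last $n-k$ coordinates to zero. (3) Since $f$ is homogeneous of degree $d$, so is $g$, because substituting $0$ for some variables preserves homogeneity and cannot raise the degree. (4) Dichotomize on whether $g$ is the zero polynomial: if $g\equiv 0$ then every point of $\pi$ lies on $S_f$, so $\pi\cap S_f=\pi$; otherwise $g$ is a nonzero homogeneous polynomial of degree exactly $d$ (it cannot drop degree while being nonzero of a fixed homogeneous degree — every monomial of $f$ restricts either to a degree-$d$ monomial in $y_0,\dots,y_k$ or to $0$), and $S_g$ is a genuine degree-$d$ hypersurface in $\Pp^k$ with which $\pi\cap S_f$ is identified under the coordinate change. (5) Note the coordinate change is defined over $\Fq$, so $S_g$ is defined over $\Fq$ and the identification respects $\Fq$-points, matching the paper's convention that everything is over $\Fq$.

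**Main obstacle.** There is no serious obstacle here; the only subtlety is the possible degeneration, and it is handled by the explicit dichotomy in step (4): either $g$ vanishes identically (giving $\pi\subset S_f$, the first alternative) or $g$ is honestly of degree $d$. One should be mildly careful that "degree $d$" is meant in the homogeneous sense — a homogeneous polynomial of degree $d$ that is not identically zero has degree exactly $d$, since all its monomials have total degree $d$ — so there is no possibility of $g$ being nonzero but of strictly smaller degree. The hypothesis $d>1$ in the statement is not actually needed for the argument as phrased, but it does no harm; I would simply remark that the same proof works for $d=1$ (where one recovers the familiar fact that a hyperplane section of a $k$-flat is either the flat or a $(k-1)$-flat inside it).
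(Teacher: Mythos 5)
Your proof is correct and takes essentially the same route as the paper: the paper parametrizes $\pi$ as a graph $x_i=G_i(x_0,\dots,x_k)$ after permuting variables and substitutes into $f$, while you move $\pi$ to the standard $k$-flat by an $\Fq$-linear change of coordinates and set the last variables to zero --- two equivalent ways of identifying $\pi$ with $\Pp^k$ before restricting $f$. The dichotomy on whether the restricted polynomial vanishes identically, and the observation that a nonzero restriction remains homogeneous of degree exactly $d$, match the paper's argument.
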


\begin{proof}
    The $k$-flat $\pi$ is defined as the intersection $S_{L_1}\cap\cdots\cap S_{L_{n-k}}$ for linearly independent linear forms $L_1,\ldots,L_{n-k}$. After rearranging the variables $x_0,\ldots,x_n$ if necessary, we can find linear forms $G_{k+1},\ldots,G_n$ so that
    \[
        P=(x_0:x_1:\cdots:x_n)\in\pi \qquad\Longleftrightarrow\qquad x_i = G_i(x_0,\ldots,x_k)\text{ for all }k+1\leq i\leq n.
    \]

    Now $(x_0:x_1:\cdots:x_n)\in \pi\cap S_f$ if $x_i=G_i(x_0,\ldots,x_k)$ for all $k+1\leq i\leq n$ and
    \[
        f(x_0,\ldots,x_k,G_{k+1}(x_0,\ldots,x_k),\ldots,G_n(x_0,\ldots,x_k)) = 0.
    \]
    This is either the zero polynomial (in which case $\pi\cap S_f=\pi$), or a degree $d$ homogeneous polynomial in $\Fq[x_0, \dots, x_k]$. In the latter case, $\pi\cap S_f$ is identified with the degree $d$ curve in $\Pp^k$ defined by the function $f(x_0,\ldots,x_k,G_{k+1}(x_0,\ldots,x_k),\ldots,G_n(x_0,\ldots,x_k))$.
\end{proof}

Using the identification of Lemma\ref{cbez}, we can identify $k$-flat tangencies with singularities of the intersection hypersurface. 
\begin{lem}\label{singtang}
    A $k$-flat $\pi\subset\Pp^n$ is tangent to the smooth hypersurface $S_f$ if and only if $\pi\cap S_f$ is identified, via Lemma~\ref{cbez}, with a singular hypersurface in $\Pp^k$.
\end{lem}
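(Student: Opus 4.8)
The plan is to reduce the statement to a local computation in the chart provided by Lemma~\ref{cbez}, and then match the condition ``$\pi \subset T_P S_f$'' against the condition ``the restricted polynomial has vanishing gradient at $P$.'' Fix the $k$-flat $\pi$ and choose, as in the proof of Lemma~\ref{cbez}, linear forms $G_{k+1}, \ldots, G_n$ so that $\pi$ is parametrized by $(x_0 : \cdots : x_k) \mapsto (x_0 : \cdots : x_k : G_{k+1} : \cdots : G_n)$, and let
\[
    g(x_0, \ldots, x_k) = f(x_0, \ldots, x_k, G_{k+1}(x_0,\ldots,x_k), \ldots, G_n(x_0,\ldots,x_k))
\]
be the restricted polynomial, so that $\pi \cap S_f$ is identified with $S_g \subset \Pp^k$ when $g \not\equiv 0$. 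Since $S_f$ is smooth, $S_f$ does not contain $\pi$ (a hyperplane would be needed, but more to the point a smooth hypersurface of degree $d>1$ contains no linear flat; in any case $\pi \cap S_f = \pi$ would force $g \equiv 0$, and one checks this cannot happen for smooth $S_f$ of degree $>1$ — I would dispatch this edge case at the start), so $g$ is a genuine degree-$d$ form and the identification applies.

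**Next I would** compute $\nabla g$ via the chain rule. Writing $\Phi(x_0,\ldots,x_k) = (x_0,\ldots,x_k,G_{k+1},\ldots,G_n)$, for $0 \le j \le k$ we have
\[
    \frac{\partial g}{\partial x_j}(P) = \frac{\partial f}{\partial x_j}(\Phi(P)) + \sum_{i=k+1}^{n} \frac{\partial f}{\partial x_i}(\Phi(P)) \, \frac{\partial G_i}{\partial x_j}(P).
\]
The key observation is that the right-hand side is exactly the value at $P$ of the linear form obtained by substituting $x_i = G_i(x_0,\ldots,x_k)$ into the linear form $\nabla f(\tilde Q) \cdot (x_0,\ldots,x_n)$ defining $T_Q S_f$, where $Q = \Phi(P) \in \pi \cap S_f$. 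In other words, the restriction to $\pi$ of the hyperplane $T_Q S_f$ is the hyperplane of $\Pp^k$ cut out by $\nabla g(P)$ — and this restriction is all of $\Pp^k$ (equivalently, $\nabla g(P) = 0$) precisely when $\pi \subseteq T_Q S_f$, i.e. when $\pi$ is tangent to $S_f$ at $Q$. Thus $Q$ is a singular point of $S_g$ if and only if $\pi$ is tangent to $S_f$ at $Q$.

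**Finally**, I would assemble the biconditional: $\pi$ is tangent to $S_f$ iff it is tangent at some point $Q \in \pi \cap S_f$, iff the corresponding point of $S_g$ is a singularity, iff $S_g$ is a singular hypersurface in $\Pp^k$. One small point worth stating carefully is that $g$ is homogeneous, so $\nabla g$ is well-defined on $\Pp^k$ up to the usual scaling ambiguity, and Euler's identity $\sum x_j \partial g/\partial x_j = d \cdot g$ guarantees that a point $P$ with $\nabla g(P) = 0$ automatically satisfies $g(P) = 0$ (here using that $\operatorname{char}\Fq$ need not divide $d$ is \emph{not} needed for this direction, only that $g(P)=0$ follows from $\nabla g(P)=0$ when $d$ is invertible; if $d \equiv 0$ one argues instead that $Q \in S_f$ directly since $Q \in \pi \cap S_f$ by hypothesis).

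**The main obstacle** I anticipate is purely bookkeeping: making the chain-rule identity above transparently say ``the defining form of $T_Q S_f$, restricted to $\pi$, equals the defining form of $T_P S_g$,'' rather than an opaque sum of partials. The cleanest way to handle this is to note that restriction-to-$\pi$ is a linear map on linear forms (sending $x_i \mapsto G_i$ for $i > k$ and fixing $x_0,\ldots,x_k$), that this map commutes with the substitution defining $g$ from $f$, and that composing a linear functional with the affine-linear parametrization $\Phi$ is exactly what the chain rule computes; then ``$\pi \subseteq T_Q S_f$'' becomes ``the restricted functional is zero,'' which is ``$\nabla g(P) = 0$,'' with no index-chasing required.
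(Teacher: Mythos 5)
Your argument is essentially the paper's proof written out in coordinates. The paper asserts directly that $T_P(\pi\cap S_f)=T_PS_f\cap\pi$ and then splits into the cases $\pi\subset T_PS_f$ or not; you verify the same identity by the chain rule, showing that $\nabla g(P)$ is exactly the coefficient vector of the restriction to $\pi$ of the linear form cutting out $T_QS_f$ at $Q=\Phi(P)$. That computation is correct, and the pointwise equivalence (singularity of $S_g$ at $P$ iff tangency of $\pi$ to $S_f$ at $Q$) follows as you say; the Euler-identity aside is unnecessary, since $g(P)=f(Q)=0$ is already part of both conditions being compared.

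One assertion in your opening paragraph is false and should be removed: a smooth hypersurface of degree $d>1$ \emph{can} contain a linear flat. A smooth quadric surface such as $S_{xw-yz}\subset\Pp^3$ contains lines, and indeed this paper itself counts the $\Fq$-lines contained in a smooth degree $q+2$ space-filling surface (Proposition~\ref{inclines}) and shows every tangent line to a smooth degree $q+1$ space-filling hypersurface is contained in it (Lemma~\ref{(Ammel Gnizama)}). So the edge case $g\equiv 0$, i.e.\ $\pi\subset S_f$, genuinely occurs and cannot be dispatched by claiming it never happens. It is harmless rather than fatal: if $\pi\subset S_f$ then $\pi=T_P\pi\subset T_PS_f$ at every $P\in\pi$, so $\pi$ is tangent to $S_f$, and $\pi\cap S_f=\pi$ is not identified with any smooth hypersurface of $\Pp^k$, which is consistent with the definition of transversality and with how the lemma is applied later. (The paper's own proof also quietly assumes the intersection is a hypersurface, so this is a shared blemish, but your fix for it needs to be the convention above, not the nonexistence claim.)
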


\begin{proof}
    The identification of Lemma~\ref{cbez} identifies the $k$-flat $\pi$ with $\Pp^k$. Working for now in $\pi\subset\Pp^n$, we see $\bar{S}=S_f\cap\pi$ has tangent space $T_P\bar{S} = T_PS_f\cap\pi$ at a point $P\in S_f\cap\pi$. Since $S_f$ is smooth, $T_Pf$ is a hyperplane. There are two cases now; either $\pi\subset T_PS_f$, or else $T_P\bar{S}$ is a $(k-1)$-flat.

    In the first case, viewing things again in $\Pp^k$, we see $T_P\bar{S}$ is all of $\Pp^k$, which means $\bar{S}$ is singular at $P$. In the second case, $T_P\bar{S}$ is the only $(k-1)$-flat within $\pi$ that is tangent to $S_f$ at $P$. After identifying with $\Pp^k$, there is exactly one hyperplane tangent to the hypersurface identified with $S_f\cap\pi$, so this hypersurface is smooth at $P$.

    Taken together, the hypersurface of $\Pp^k$ identified with $S_f\cap\pi$ is smooth if it is smooth at all points, if and only if $\pi$ is nowhere tangent to $S_f$.
\end{proof}

We will define transverse $k$-flats to be those that are not tangent, and using Lemma~\ref{singtang} we can phrase this definition in terms of the smoothness of the intersection.

\begin{defn}[Transverse $k$-flats]
    We say the $k$-flat $\pi$ is transverse to $S$ if $S\cap\pi$ corresponds, via Lemma~\ref{cbez}, to a smooth hypersurface in $\Pp^k$.
\end{defn}

\begin{rmk}\label{dim0smth}
    In the case of $k = 1$, i.e. $\pi=L$ is a line, the intersection $S\cap L$ is identified with a ``hypersurface" in $\Pp^1$. The defining function can be written as a homogeneous polynomial $g(x,y)$. If $S$ has degree $d$ then so does $g$, and we can factor $g(x,y)$ as a product of $d$ homogeneous linear factors. Then the singularity condition $g(x,y)=0$, $\nabla g(x,y)=0$ translates to a repeated factor. Thus we may say that $S\cap L$ is smooth if $S\cap L$ consists of exactly $d=\deg(g)$ distinct points.
\end{rmk}

One can see the combination of Lemma~\ref{singtang} and Remark~\ref{dim0smth} as saying that a line $L\subset\Pp^n$ is tangent to a degree $d$ hypersurface $S$ if $L\cap S$ does not consist of exactly $d$ points. From  B\'{e}zout's theorem, we know $d$ is the maximum finite number of intersections a degree $d$ hypersurface and a line can have. Now we may define the basic object of study for Sections~\ref{ndimproj} and \ref{sec:cones} below.

\begin{defn}\label{def:ktf}
    A projective hypersurface $S \subset \Pp^n$ defined over $\Fq$ is \ktf\, if no $k$-flat in $\Pp^n$ defined over $\Fq$ is transverse to $S$, i.e. if every $k$-flat defined over $\Fq$ is tangent to $S$.

    If $S$ is \ktf\ for $k=1$, we say $S$ is \ltf. If $S$ is \ktf\ for $k=2$, we say $S$ is \ptf.
\end{defn}

In the ovoid $\Ovd$, circles are defined in terms of planes in $\Pp^3$, and the tangent line to a circle at a point $P$ lies both in its corresponding plane and in $T_P\Ovd$. Thus we may view a tangency between an inversive curve and a circle within the plane that defines a circle. Likewise we can define transverse circles to inversive curves.

\begin{defn}[Transverse circles]
    The circle $C_\ell\subset\Ovd$ is transverse to the curve $C_f\subset\Ovd$ if $C_\ell$ is not tangent to $C_f$ at any $P\in C_\ell\cap C_S$.
\end{defn}

Circles in $\Ovd$ are defined in terms of planes in $\Pp^3$, and from Lemma~\ref{cbez} we can identify the intersection of a plane with any other surface as a curve in $\Pp^2$. One can phrase the above notion of transversality in terms of the number of intersections of these curves, analogous to the observation about using B\'{e}zout's theorem for intersections of hypersurfaces and lines. However we found this connection to be not extremely useful for our purposes, so we will not pursue it.

For inversive curves, our main object of study are those that are tangent to every circle.

\begin{defn}[Circle-transverse-free]
    An inversive curve $C\subset\Ovd$ defined over $\Fq$ is \ctf\ if it is tangent to every $\Fq$-circle in $\Ovd$.
\end{defn}

And a hyperbolic curve is simply an $xw$-symmetric curve in $\Ovd$. The lines in the hyperbolic plane are simply the $xw$-symmetric circles in $\Ovd$ aside from $B$. Using the same notions of tangency and transversality as above, we may define hyperbolic \ltf\ curves as follows.

\begin{defn}
    A hyperbolic curve $C$ defined over $\Fq$ is \ltf\ if it is tangent to every hyperbolic line, i.e. every $xw$-symmetric $\Fq$-circle in $\Ovd$.
\end{defn}

\section{Transverse-free hypersurfaces in projective space}\label{ndimproj}

Our first results are focused on smooth, \spf\, \ktf\, hypersurfaces in $\Pp^n$ of low degree. By \spf\, we mean containing $\Pp^n(\Fq)$. 

\begin{defn}[Space-filling]\label{spf}
    A hypersurface $S_f \subset \Pp^n$ is \spf\ if $\Pp^n(\Fq) \subseteq S_f$, i.e. $f(P)=0$ for all $P\in\Pp^n(\Fq)$.
\end{defn}

For the sake of convenience, we denote

\[
    \SSFS_n(d) = \{\text{smooth, \spf\ hypersurfaces in }\Pp^n\text{ of degree }d\}.
\]

\begin{rmk}\label{minimal degree}
    It is well known that \spf\ hypersurfaces exist only in degrees $d\geq q+1$. And it is shown in Proposition 14 of \cite{hk-bound} that if the dimension $n$ is even, $\SSFS_n(q+1)$ is empty.

    The set $\SSFS_2(q+2)$ is shown to be non-empty in \cite{hk-singular}, and \cite{asgarli-lines} claims that $\SSFS_3(q+2)$ is non empty when $q\leq31$ is prime.. For odd $n$, \cite{space-filling} gives examples of hypersurfaces in $\SSFS_n(q+1)$.
\end{rmk}

Inspired by the examples of smooth \ltf\ curves in $\Pp^2$ exhibited in \cite{transverse-free}, in this section, we analyze low-degree smooth \spf\ hypersurfaces in order to produce some examples of \ktf\ hypersurfaces for  $k=1$ and $k$ even. We also obtain some results showing the relationship (or lack thereof) between \ltf\ and \ptf\ hypersurfaces of low degree.

\subsection{Degree q+1 hypersurfaces}

We first consider the minimal degree $d = q+1$, with $n$ odd. From Remark~\ref{minimal degree} we can immediately conclude:

\begin{thm}{\label{qp1AlwaysPTF}}
    If $n$ is odd and $k\geq2$ is even, then every $S \in \SSFS_n(q + 1)$ is \ktf.
\end{thm}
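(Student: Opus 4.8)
The plan is to reduce the question of $k$-flat tangency to a dimension count using Lemma~\ref{singtang} together with the parity obstruction from Remark~\ref{minimal degree}. Fix $S=S_f\in\SSFS_n(q+1)$ with $n$ odd, and fix an even $k$ with $2\le k\le n-1$. Let $\pi\subset\Pp^n$ be an arbitrary $\Fq$-defined $k$-flat; I want to show $\pi$ is tangent to $S$.

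First I would invoke Lemma~\ref{cbez} to identify $\pi\cap S$ with a hypersurface in $\Pp^k$ cut out by the polynomial $g(x_0,\dots,x_k) = f(x_0,\dots,x_k,G_{k+1},\dots,G_n)$, where the $G_i$ are linear forms as in that lemma. The key observations are: (i) $g$ is homogeneous of degree $q+1$, since $f$ has degree $q+1$ and $S$ has no linear components (being smooth, hence irreducible — or at any rate the substitution does not drop the degree because $S$ is not a union of hyperplanes; more carefully, if $g$ were the zero polynomial then $\pi\subset S$, but a smooth hypersurface of degree $q+1>1$ contains no $k$-flat, since its intersection with that flat would be the flat itself, forcing $g\equiv 0$, whereas restricting $f$ to a generic line in $\pi$ gives a nonzero degree-$(q+1)$ form because $S$ is smooth — so $g\not\equiv 0$); and (ii) $g$ vanishes at every $\Fq$-point of $\Pp^k$, because $S$ is space-filling and every $\Fq$-point of $\pi$ is an $\Fq$-point of $\Pp^n$ lying on $S$. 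Hence the hypersurface $S_g\subset\Pp^k$ is a space-filling hypersurface of degree $q+1$.

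Now comes the decisive step: since $k$ is even, Remark~\ref{minimal degree} (Proposition 14 of \cite{hk-bound}) tells us that $\SSFS_k(q+1)$ is empty, i.e. there is no \emph{smooth} space-filling hypersurface of degree $q+1$ in $\Pp^k$. Therefore $S_g$ must be singular somewhere. By Lemma~\ref{singtang}, a $k$-flat $\pi$ is tangent to the smooth hypersurface $S_f$ precisely when $\pi\cap S_f$ is identified via Lemma~\ref{cbez} with a singular hypersurface in $\Pp^k$ — which is exactly what we have just established. Hence $\pi$ is tangent to $S$. Since $\pi$ was an arbitrary $\Fq$-defined $k$-flat, $S$ is \ktf, and since $S\in\SSFS_n(q+1)$ was arbitrary, the theorem follows.

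The only genuine subtlety — and the step I would write most carefully — is point (i) above, namely ruling out $g\equiv 0$ (equivalently, ruling out $\pi\subset S$) so that $S_g$ is honestly a degree-$(q+1)$ hypersurface in $\Pp^k$ to which the emptiness of $\SSFS_k(q+1)$ applies. This is where smoothness of $S$ is used: a smooth hypersurface of degree $>1$ cannot contain a positive-dimensional linear space, because along such a space the tangent hyperplane would have to contain the space at every point, which is fine, but more to the point the restriction $g$ of $f$ to $\pi$ cannot vanish identically — if it did, every point of the $k$-flat $\pi$ (including non-$\Fq$ points) would lie on $S$, and one checks that a linear space contained in $S_f$ forces $f$ to have a common factor with the linear forms defining $\pi$, contradicting irreducibility/smoothness of $S_f$ when $\deg f = q+1 \ge 3$. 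Everything else is a direct chaining of the already-established Lemmas~\ref{cbez} and~\ref{singtang} with the cited emptiness result, so no new computation is required.
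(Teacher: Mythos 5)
Your overall strategy is the paper's: restrict $f$ to the $k$-flat via Lemma~\ref{cbez}, observe the restriction is space-filling of degree $q+1$ in $\Pp^k$, invoke the emptiness of $\SSFS_k(q+1)$ for $k$ even (Remark~\ref{minimal degree}) to conclude the restriction is singular, and apply Lemma~\ref{singtang}. That chain is fine. The problem is precisely the step you single out as the ``only genuine subtlety'': your claim that a smooth hypersurface of degree $q+1\ge 3$ cannot contain a $k$-flat is false, and so is the algebraic justification you sketch for it. Containment of $\pi=S_{L_1}\cap\cdots\cap S_{L_{n-k}}$ in $S_f$ means $f$ lies in the ideal $(L_1,\dots,L_{n-k})$, not that $f$ shares a common factor with one of the $L_i$; for instance $x_0x_1+x_2x_3\in(x_0,x_2)$ defines a smooth irreducible quadric containing the line $x_0=x_2=0$. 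Worse, the claim is contradicted by hypersurfaces of exactly the kind this theorem is about: in $\Pp^5$ the polynomial $\sum_{i=0}^{2}\bigl(x_{2i}x_{2i+1}^q-x_{2i}^qx_{2i+1}\bigr)$ is smooth (its gradient is $(x_1^q,-x_0^q,x_3^q,-x_2^q,x_5^q,-x_4^q)$), space-filling, of degree $q+1$, and contains the $2$-flat $x_0=x_2=x_4=0$. Indeed, elsewhere in this paper (Lemma~\ref{(Ammel Gnizama)}, Proposition~\ref{inclines}) it is shown that smooth space-filling hypersurfaces of these degrees necessarily contain many lines, so smoothness simply does not exclude linear subspaces of small dimension.

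The fix is that you do not need to rule out $\pi\subset S$ at all: if $\pi\subset S$ then $S\cap\pi=\pi$, which by definition is not a smooth hypersurface in $\Pp^k$, so $\pi$ is not transverse to $S$ (equivalently, at every $P\in\pi$ one has $\pi=T_P\pi\subset T_PS$, so $\pi$ is tangent at every point). This is exactly how the paper's proof handles that branch --- it treats ``$S\cap\pi=\pi$'' as an immediate instance of tangency rather than an obstruction. With that one-line replacement your argument is correct and coincides with the paper's.
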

\begin{proof}
    By Definition~\ref{def:ktf}, $S$ is \ktf\ iff for every $k$-flat $\pi$, $S \cap \pi$ has a singularity or $S \cap \pi = \pi$. From Lemma \ref{cbez}, every variety $S \cap \pi$ is either $\pi$, which implies $\pi$ is tangent to $S$, or is identified with a hypersurface of degree $q+1$ in $\Pp^k$. Because $S$ is \spf, $S \cap \pi$ is \spf. Proposition 14 of \cite{hk-bound} says that no smooth \spf\ hypersurfaces in $\Pp^k$ of degree $q+1$ exist because $k$ is even. Hence in this case too, it follows that $\pi$ is tangent to $S$.
\end{proof}

We further comment on the \ltf-ness of \spf\ hypersurfaces of minimum degree.

\begin{lem}\label{(Ammel Gnizama)}
    If $L$ is an $\Fq$-line tangent to some $S \in \SSFS_n(q+1)$, then $L$ is completely contained within the hypersurface $S$, i.e. $L \subset S$.
\end{lem}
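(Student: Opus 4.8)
The plan is to reduce to the one-dimensional picture using Lemma~\ref{cbez} and Remark~\ref{dim0smth}. Let $L$ be an $\Fq$-line tangent to $S = S_f \in \SSFS_n(q+1)$. By Lemma~\ref{cbez}, the intersection $L \cap S$ is either all of $L$ — in which case we are done — or it is identified with a hypersurface in $\Pp^1$ defined by a homogeneous polynomial $g(x,y)$ of degree $q+1$. So assume for contradiction that we are in the latter case.

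First I would use the hypothesis that $S$ is \spf. Every $\Fq$-point of $L$ lies in $S$, and $L$ has exactly $q+1$ $\Fq$-points, namely $\Pp^1(\Fq)$. So $g(x,y)$ vanishes at all $q+1$ points of $\Pp^1(\Fq)$. The standard fact here is that the product $\prod_{(a:b)\in\Pp^1(\Fq)}(bx - ay)$ equals (up to scalar) $x^q y - x y^q$, a polynomial of degree exactly $q+1$; since $g$ has degree $q+1$ and vanishes at all these points, $g$ must be a scalar multiple of $x^q y - x y^q = xy(x^{q-1} - y^{q-1})$. In particular $g$ has a repeated root whenever $q \geq 2$: indeed $x^q y - xy^q = xy\prod_{c \in \Fq^\times \setminus\{1\}}\cdots$ has distinct roots, so actually I should be more careful — $x^q y - xy^q$ has $q+1$ distinct roots over $\overline{\Fq}$, so it is \emph{squarefree}. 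So the contradiction does not come from a repeated root of $g$ directly; rather, tangency of $L$ to $S$ means (via Lemma~\ref{singtang} and Remark~\ref{dim0smth}) that $L \cap S$ is \emph{not} exactly $q+1$ distinct points, i.e. $g$ is \emph{not} squarefree. But we just showed $g$ is forced to be a scalar multiple of the squarefree polynomial $x^q y - xy^q$. This contradiction shows the first case cannot hold, so $L \cap S = L$, i.e. $L \subset S$.

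The main obstacle — really the only subtlety — is pinning down exactly why a space-filling degree-$(q+1)$ restriction $g$ is forced to equal $x^qy - xy^q$ up to scalar, and confirming that this polynomial is squarefree; this is the pigeonhole/degree-count step: $q+1$ prescribed vanishing points on a degree-$q+1$ binary form determine it up to scalar. Once that identification is made, the tangency hypothesis combined with Remark~\ref{dim0smth} immediately forces the contradiction, since tangency is precisely the failure of $L \cap S$ to consist of $\deg g = q+1$ distinct points. One bookkeeping point to handle: the case $q$ such that $q+1$ could exceed the number of available points is impossible since $|\Pp^1(\Fq)| = q+1$ exactly, so the count is tight and the argument is clean.
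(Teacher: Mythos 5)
Your proposal is correct and rests on the same underlying idea as the paper's proof: a degree count on $L\cap S$ showing that the $q+1$ rational points of $L$ already exhaust the degree-$(q+1)$ intersection, leaving no room for a tangency. The paper phrases this by invoking B\'{e}zout's theorem and adding multiplicities ($q+1$ points of multiplicity $\geq 1$ plus one of multiplicity $\geq 2$ exceeds $q+1$), whereas you identify the restriction explicitly as a scalar multiple of the squarefree form $x^qy-xy^q$; this is a slightly more self-contained packaging of the same argument and even yields the marginally sharper conclusion that a non-contained line meets $S$ exactly in $\Pp^1(\Fq)$, transversally.
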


\begin{proof}
     Given a line $L$ tangent to $S$, B\'{e}zout's Theorem tells us that if $L$ and $S$ intersect at finitely many points, then there are exactly $\deg{S}=q+1$ intersection points counted with multiplicity. Because $S$ is space filling, it must intersect each of the $q+1$ $\Fq$-points on $L$, and since $L$ is tangent to $S$ at one of these points, one of those intersections has multiplicity $\geq2$.

     So if $L$ is not contained in $S$ then it must have at least $q+2$ intersections with $S$, counted with multiplicity. But this is greater than the $q+1$ intersections that B\'{e}zout's Theorem decrees, a contradiction. Thus $L$ must be contained in $S$.
\end{proof}

Using Lemma~\ref{(Ammel Gnizama)}, we arrive at the following result, which completes our tour of hypersurfaces of degree $q+1$.

\begin{thm}{\label{qp1NeverLTF}}
   No hypersurface $S \in \SSFS_n(q+1)$ is \ltf. 
\end{thm}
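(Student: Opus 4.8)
The plan is a short proof by contradiction resting on Lemma~\ref{(Ammel Gnizama)}. Suppose some $S = S_f \in \SSFS_n(q+1)$ were \ltf\ (we may take $n \geq 2$, since for $n = 1$ there are no $\Fq$-lines and the statement is vacuous). Being \ltf, every $\Fq$-line is tangent to $S$, so Lemma~\ref{(Ammel Gnizama)} upgrades this to the statement that every $\Fq$-line is in fact \emph{contained} in $S$. From here I would localize at a single point to reach an absurdity.

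Concretely, I would fix any $P \in \Pp^n(\Fq)$; since $S$ is \spf\ we have $P \in S$, and since $S$ is smooth the tangent space $T_P S$ is a genuine hyperplane, defined by the linear form $\ell_P(x) = \nabla f(P) \cdot x$. The key step is to observe that every $\Fq$-line $L = \langle P, Q\rangle$ through $P$ lies inside $T_P S$: from $L \subseteq S$ one gets $f(P + tQ) \equiv 0$ as a polynomial in $t$, so its degree-one coefficient $\nabla f(P)\cdot Q = \ell_P(Q)$ vanishes, while Euler's identity together with $f(P) = 0$ gives $\ell_P(P) = \nabla f(P)\cdot P = (q+1)f(P) = 0$; since $\ell_P$ is linear it then vanishes on all of $\langle P, Q\rangle$. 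As $Q$ ranges over $\Pp^n(\Fq)$, the line $\langle P,Q\rangle$ is an $\Fq$-line, so this forces $\Pp^n(\Fq) \subseteq T_P S$. That is impossible, because no hyperplane contains all of $\Pp^n(\Fq)$ — for instance the coordinate points $(1:0:\cdots:0),\ldots,(0:\cdots:0:1)$ already span $\Pp^n$. This contradiction completes the proof.

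I do not anticipate a real obstacle here; the argument is essentially a one-line consequence of Lemma~\ref{(Ammel Gnizama)} plus the tangent-hyperplane computation. The only things to keep honest are that extracting the linear coefficient of $f(P+tQ)$ is a formal, denominator-free operation valid in any characteristic, and that Euler's relation is used only in the unconditional form $\nabla f(P)\cdot P = d\,f(P)$ (no division by $d$). If one prefers an even more self-contained route that avoids Lemma~\ref{(Ammel Gnizama)}, one can rerun its Bézout argument inline: for each $\Fq$-line $L$ through $P$, the $q+1$ $\Fq$-points of $L$ all lie on $S$ and tangency forces an intersection multiplicity $\geq 2$, giving $\geq q+2$ intersections unless $L\subseteq S$, and then the same tangent-hyperplane contradiction applies.
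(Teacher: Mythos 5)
Your proof is correct, and after the shared first step it takes a genuinely different route from the paper's. Both arguments invoke Lemma~\ref{(Ammel Gnizama)} to upgrade the \ltf\ hypothesis to ``every $\Fq$-line is contained in $S$.'' The paper then argues globally, by double-counting incidences $(P,L)$ with $L$ tangent to $S$ at the $\Fq$-point $P$: each of the $\pi_q(0,n)$ rational points lies on $\pi_q(n-3,n-2)$ lines inside its tangent hyperplane, each tangent line is counted $q+1$ times because it is contained in $S$, and the resulting number of tangent lines falls strictly short of the total $\pi_q(1,n)$. You instead argue locally at a single rational point $P$: every $\Fq$-line through $P$ lies in $S$, the formal identity $f(P+tQ)=f(P)+t\,(\nabla f(P)\cdot Q)+\cdots$ (denominator-free, hence valid in characteristic $p$) together with Euler's relation then forces every $\Fq$-point into the hyperplane $T_PS$, and no hyperplane contains all of $\Pp^n(\Fq)$. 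Your version is shorter, dispenses with the incidence combinatorics, and isolates exactly where smoothness enters (it is what makes $T_PS$ a proper hyperplane); the paper's count has the side benefit of producing the exact number of tangent lines such a configuration could account for, in the same spirit as the computation reused in Proposition~\ref{inclines}. The only detail worth stating explicitly in a write-up is that the containment $L\subset S$ furnished by Lemma~\ref{(Ammel Gnizama)} is containment of the whole line over $\overline{\Fq}$, which is what justifies $f(P+tQ)\equiv 0$ as a polynomial identity in $t$ rather than merely a vanishing statement at the $q+1$ rational points.
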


\begin{proof}
    Suppose for the sake of contradiction that a hypersurface $S\in\SSFS_n(q+1)$ is \ltf. $S$ is space filling, so each of the $\pi_q(0,n)=\frac{q^{n+1}-1}{q-1}$ points of $\Pp^n(\Fq)$ is contained in $S$, and since $S$ is smooth it has a tangent hyperplane at each of these points. According to Lemma~\ref{duallem}, the number of lines within a hyperplane which pass through a given point is $\pi_q(n-3,n-2)=\frac{q^{n-1}-1}{q-1}$.
    
    Each of these lines is, by our \ltf\ assumption, tangent to $S$ and thus, by Lemma~\ref{(Ammel Gnizama)}, contained within $S$. Consider the set pairs $(P,L)$ where $L$ is an $\Fq$-line tangent to $S$ at the $\Fq$-point $P$. From Lemma~\ref{(Ammel Gnizama)}, each $P\in S(\Fq)$ contributes a number of pairs to this set equal to $\pi_q(n-3,n-2)$, so we compute
    \[
        \#\{(P,L)\mid S\text{ is tangent to }L\text{ at }P\} = \pi_q(0,n) \pi_q(n-3,n-2) = \frac{(q^{n+1}-1)(q^{n-1}-1)}{(q-1)^2}.
    \]
    
    Note that each of the $\frac{q^n-1}{q-1}$ lines defined over $\Fq$ is included in this count since every line $L$ is tangent to $S$. And from Lemma~\ref{(Ammel Gnizama)} we know that each line tangent to the hypersurface $S$ must be completely contained within $S$, and so each line has $q+1$ $\Fq$-points of tangency. Thus each line appears $q+1$ times in the count above. In other words, the number of lines accounted for in the expression is
    \[
        \frac{(q^{n+1}-1)(q^{n-1}-1)}{(q^2-1)(q-1)}.
    \]
    But this is less than $\pi_q(1,n) = \frac{(q^{n+1}-1)(q^{n}-1)}{(q^2-1)(q-1)}$, the total number lines in $\Pp^n(\Fq)$. This contradicts our assumption that $S$ was \ltf.
\end{proof}

The two theorems so far show an interesting dichotomy between \ktf\-ness for different values of $k$. While we do not address the case $k>1$ odd, we have seen that every $S\in\SSFS_n(q+1)$ is \ktf\ for all $k\geq 2$ even but not for $k=1$.

\subsection{Degree q+2 hypersurfaces}

Next we turn our attention to the next smallest case of smooth \spf\ hypersurfaces of degree $d=q+2$. Again we can say something about intersections with lines.

\begin{prop}\label{clsfying-lines}
    For $S \in \SSFS_n(q + 2)$, every $\Fq$-line is either tangent to $S$ at exactly one $\Fq$-point or is completely contained in $S$ and therefore tangent to $S$ at $q+1$ $\Fq$-points.
\end{prop}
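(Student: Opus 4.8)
The plan is to use B\'{e}zout's theorem applied to a line $L$ and the degree $q+2$ hypersurface $S$, together with the space-filling hypothesis. Since $S$ is space-filling, every one of the $q+1$ $\Fq$-points on $L$ lies in $S\cap L$. By Lemma~\ref{cbez} and Remark~\ref{dim0smth}, $S\cap L$ is either all of $L$ (i.e. $L\subset S$) or is cut out by a nonzero homogeneous polynomial $g(x,y)$ of degree $q+2$ on $L\cong\Pp^1$, whose zero set (with multiplicity) consists of exactly $q+2$ points. So first I would dispose of the case $L\subset S$: then $L$ meets $S$ at all $q+1$ of its $\Fq$-points, and being contained in $S$ it is tangent to $S$ at each of them (since $L\subset T_PS$ for every $P\in L$), giving the second alternative.

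Next, assume $L\not\subset S$, so $g$ has exactly $q+2$ roots counted with multiplicity in $\overline{\Fq}$, of which at least $q+1$ are the $\Fq$-points of $L$. The remaining single root is either a $q+2$nd distinct point or a doubling of one of the existing $\Fq$-points. In the first subcase all $q+2$ roots are distinct, so by Remark~\ref{dim0smth} the intersection is smooth, i.e. $L$ is transverse to $S$ — but wait, that would contradict nothing a priori, so this is actually the case we must handle carefully: I would need to argue this subcase \emph{cannot} occur, or rather restate the proposition. Let me re-read: the proposition claims every line is tangent. Hmm — so I need a reason a line can't be transverse.

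The key missing ingredient: $S$ is space-filling of degree $q+2$ in $\Pp^n$. I claim this forces $L\cap S$ to have a repeated point. The point is that an extra, non-$\Fq$ root of $g(x,y)$ would have to be defined over some extension, but $g$ has coefficients in $\Fq$ and already has $q+1$ roots in $\Fq$ (accounting for degree $q+1$); the leftover linear factor has coefficients in $\Fq$, hence its root is in $\Fq$, hence it coincides with one of the existing $q+1$ $\Fq$-roots, producing a double point. Concretely, factor out the $q+1$ linear forms vanishing at the $\Fq$-points of $L$: $g(x,y) = \big(\prod_{[a:b]\in\Pp^1(\Fq)}(bx-ay)\big)\cdot h(x,y)$ where $h$ is a degree-$1$ form over $\Fq$, so $h$ vanishes at some $\Fq$-point $P_0$ of $L$, and then $g$ vanishes to order $\geq 2$ at $P_0$. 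By Remark~\ref{dim0smth} (repeated factor $\Leftrightarrow$ singular) and Lemma~\ref{singtang}, $L$ is tangent to $S$ at $P_0$. It remains to see tangency occurs at exactly one $\Fq$-point: since $\deg g = q+2$ and it has $q+1$ distinct $\Fq$-roots, the only repeated root is $P_0$ with multiplicity exactly $2$, so every other $\Fq$-point of $L$ is a simple root and $L$ is not tangent there; combined with the fact that all tangency points of a line lie among $L\cap S \subseteq L(\Fq)$ (no non-$\Fq$ intersection points exist since the whole factorization is over $\Fq$), tangency happens at exactly that one point $P_0$.

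The main obstacle is the bookkeeping in the non-contained case: one must be careful that "$q+2$ roots with multiplicity, at least $q+1$ of which are the distinct $\Fq$-points of $L$" genuinely forces a double point rather than, say, a pair of conjugate non-real extra roots — which is impossible here only because there is a single leftover root after dividing by $q+1$ linear factors, and a degree-one factor over $\Fq$ must have its root in $\Fq$. I would make this rigorous via the explicit factorization $g = \big(\prod_{P\in L(\Fq)}\ell_P\big)\cdot h$ with $\deg h = 1$, noting $\prod_{P\in L(\Fq)}\ell_P$ is (up to scalar) $x^q y - x y^q = xy(x^{q-1}-y^{q-1})$-type product of degree $q+1$. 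Everything else is a direct application of Lemma~\ref{cbez}, Remark~\ref{dim0smth}, Lemma~\ref{singtang}, and the definition of tangency to a line, so no genuinely hard step remains once the factorization argument is in place.
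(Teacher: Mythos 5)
Your proof is correct and follows essentially the same route as the paper: B\'{e}zout's theorem plus the space-filling hypothesis force the $q+2$ intersection points (with multiplicity) of a non-contained line to consist of the $q+1$ rational points plus one extra root, which must itself be rational and hence double up one of them. The only cosmetic difference is how rationality of the extra root is justified --- you divide out the product of the $q+1$ rational linear forms and observe that the degree-one quotient is defined over $\Fq$, whereas the paper notes that a lone non-$\Fq$ intersection point would have a distinct Frobenius conjugate also lying in the intersection, overshooting the degree.
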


\begin{proof}
    By B\'{e}zout's Theorem, either the line is included in $S$ and thus intersects $S$ at infinitely many points, or it intersects $S$ at $q + 2$ points counted with multiplicity. We need only explore the latter case. Since $S$ is \spf, we have that every $\Fq$-point on the line has intersection multiplicity at least one.
    
    Hence, we need only eliminate the case where all intersection multiplicities between $S$ and some line $L$ are exactly one. Under this assumption, $L$ must intersect $S$ at exactly one non-$\Fq$-point, $P=(p_0:p_1:\cdots,p_n)$. But since both the line and $S$ are defined over $\Fq$, the Frobenius conjugate, $P^{\sigma}=(p_0^q:p_1^q:\cdots,p_n^q)$, is also an intersection point. But then $P$ was not the only non-$\Fq$-point in $S\cap L$, a contradiction. Thus one $\Fq$-point $Q\in S\cap L$ must have multiplicity 2, causing $L$ to be tangent to $S$ at $Q$. In other words, every line not contained in $S$ is tangent to $S$ at exactly one $\Fq$-point.
\end{proof}

An immediate consequence of the above Proposition is the following.

\begin{cor}{\label{qp2AlwaysLTF}}
    Every $S \in \SSFS_n(q + 2)$ is \ltf.
\end{cor}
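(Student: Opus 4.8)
The plan is to read the corollary off directly from Proposition~\ref{clsfying-lines}. Recall from Definition~\ref{def:ktf} that a hypersurface $S\subset\Pp^n$ is \ltf\ exactly when no $\Fq$-line is transverse to it, i.e.\ when every $\Fq$-line is tangent to $S$. So the whole task reduces to checking that an arbitrary $\Fq$-line $L$ is tangent to $S$.

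First I would invoke Proposition~\ref{clsfying-lines}, which already partitions the $\Fq$-lines of $\Pp^n$ into two families: those tangent to $S$ at exactly one $\Fq$-point, and those contained in $S$ (hence tangent at all $q+1$ of their $\Fq$-points). In the first family $L$ is tangent to $S$ by definition. In the second family $S\cap L = L$, which via Lemma~\ref{cbez} and the conventions recorded in Remark~\ref{dim0smth} and Lemma~\ref{singtang} is not the configuration of $q+2$ distinct points that transversality would require, so $L$ is tangent to $S$ in this case too. Since these two families exhaust all $\Fq$-lines, $S$ is \ltf.

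There is essentially no obstacle here; the mathematical content lives entirely in Proposition~\ref{clsfying-lines}, and this corollary is just a restatement in the language of Definition~\ref{def:ktf}. The one point to handle with a little care is the bookkeeping convention that a line contained in $S$ counts as a tangent line rather than as an exceptional case, which is precisely what the setup around Lemma~\ref{cbez}, Lemma~\ref{singtang}, and Remark~\ref{dim0smth} was arranged to provide, so I would simply cite it rather than re-derive it.
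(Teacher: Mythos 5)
Your proposal is correct and matches the paper exactly: the paper states this corollary as an immediate consequence of Proposition~\ref{clsfying-lines}, and your argument simply spells out why the two cases of that proposition (tangent at one $\Fq$-point, or contained in $S$ and hence tangent) exhaust all $\Fq$-lines. Nothing further is needed.
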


Above we saw that the notions of \ltf\ and \ptf\ do not coincide for smooth \spf\ hypersurfaces of degree $q+1$. In degree $q+2$ we have just seen that every smooth \spf\ hypersurface is \ltf. We will proceed to show that in dimension $n=3$, no $S\in SSFS_3(q+2)$ is \ptf when $q$ is odd. With this, we have two classes of surfaces, $\SSFS_3(q+1)$ and $\SSFS_3(q+2)$ for $q$ odd, one of which is always \ltf\ and never \ptf, and the other is always \ptf\ and never \ltf.

The tangent lines to $S$ give us some information about the tangent planes, and in particular we can leverage Proposition \ref{clsfying-lines} to give the following result which will prove essential to our goal.

\begin{prop}\label{inclines}
    Exactly $\frac{(q^{n+1}-1)(q^{n-2}-1)}{(q^2-1)(q-1)}$ $\Fq$-lines are included in every $S \in \SSFS_n(q + 2)$.
\end{prop}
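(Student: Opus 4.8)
The plan is to double count the set
\[
    \Sigma = \{(P,L)\mid L\text{ is an }\Fq\text{-line tangent to }S\text{ at the }\Fq\text{-point }P\},
\]
exactly as in the proof of Theorem~\ref{qp1NeverLTF}, but now feeding in the sharper dichotomy supplied by Proposition~\ref{clsfying-lines}.

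First I would count $\Sigma$ by its first coordinate. Since $S$ is \spf, we have $S(\Fq)=\Pp^n(\Fq)$, which has $\pi_q(0,n)=\frac{q^{n+1}-1}{q-1}$ points, and since $S$ is smooth each such point $P$ carries a genuine tangent hyperplane $T_PS\cong\Pp^{n-1}$. An $\Fq$-line $L$ through $P$ is tangent to $S$ at $P$ if and only if $L\subset T_PS$, so the lines contributing to $\Sigma$ through a fixed $P$ are precisely the $\Fq$-lines of $T_PS$ passing through $P$; by Lemma~\ref{duallem} there are $\pi_q(n-3,n-2)=\frac{q^{n-1}-1}{q-1}$ of them. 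Hence
\[
    |\Sigma| = \frac{q^{n+1}-1}{q-1}\cdot\frac{q^{n-1}-1}{q-1} = \frac{(q^{n+1}-1)(q^{n-1}-1)}{(q-1)^2}.
\]

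Next I would count $\Sigma$ by its second coordinate. Write $N=\pi_q(1,n)=\frac{(q^{n+1}-1)(q^n-1)}{(q^2-1)(q-1)}$ for the total number of $\Fq$-lines in $\Pp^n$ and $I$ for the number of $\Fq$-lines contained in $S$ (the quantity we want). By Proposition~\ref{clsfying-lines}, each of the $N-I$ lines not contained in $S$ is tangent to $S$ at exactly one $\Fq$-point and so contributes one pair to $\Sigma$, while each of the $I$ lines contained in $S$ is tangent to $S$ at all $q+1$ of its $\Fq$-points and so contributes $q+1$ pairs. Therefore $|\Sigma| = (N-I)+(q+1)I = N+qI$.

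Equating the two expressions for $|\Sigma|$ gives
\[
    qI = \frac{(q^{n+1}-1)(q^{n-1}-1)}{(q-1)^2} - \frac{(q^{n+1}-1)(q^n-1)}{(q^2-1)(q-1)};
\]
pulling out $\frac{q^{n+1}-1}{q-1}$ and combining the remaining fractions over $q^2-1$, the numerator $(q^{n-1}-1)(q+1)-(q^n-1)$ collapses to $q(q^{n-2}-1)$, so dividing by $q$ yields $I = \frac{(q^{n+1}-1)(q^{n-2}-1)}{(q^2-1)(q-1)}$, as claimed. I do not expect a real obstacle here: the one substantive input — that every $\Fq$-line meets $S$ in the clean ``one-or-all'' pattern of tangencies — is already packaged in Proposition~\ref{clsfying-lines}, and the only points needing attention are the appeals to smoothness and to $S$ being \spf\ that make the point-side count legitimate, together with the routine algebraic simplification at the end.
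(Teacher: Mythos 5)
Your proposal is correct and follows essentially the same route as the paper: both double-count the tangency pairs $(P,L)$, using smoothness and the \spf\ property with Lemma~\ref{duallem} to get $\pi_q(0,n)\pi_q(n-3,n-2)$ pairs on the point side, and Proposition~\ref{clsfying-lines} to get $N+qI$ on the line side (the paper phrases this as the linear system $\ell_0+\ell_1=N$, $(q+1)\ell_0+\ell_1=\pi_q(0,n)\pi_q(n-3,n-2)$). The algebraic simplification you indicate checks out.
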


In particular, we will use this result for $n=3$, which gives $q^2+1$ included lines.

\begin{proof}
    We proceed by counting the number of pairs $(P,L)$ where the line $L$ is tangent to $S$ at the $\Fq$-point $P$. Let $\ell_0$ be the number of lines contained in $S$ and let $\ell_1$ be the number of lines tangent to $S$ at exactly one $\Fq$-point. We know the total number of $\Fq$-lines in $\Pp^n(\Fq)$ is $\pi_q(1,n)$, so Proposition \ref{clsfying-lines} gives us the partition
    
    \[
        \ell_0 + \ell_1 = \pi_q(1,n).
    \]
    
    Since $S$ is smooth, \spf, and defined over $\Fq$, for every $P\in\Pp^n(\Fq)$ there is exactly one $\Fq$-hyperplane $\pi$ tangent to $S$ at $P$. Just as in the proof of Theorem~\ref{qp1NeverLTF}, we have from Lemma \ref{duallem} the number of lines in $\pi$ that pass through $P$ as $\pi_q(n-3,n-2)$. All of these $\pi_q(n-3,n-2)$ lines are tangent to $S$.
    
    For all $P\in\Pp^n(\Fq)$, there are $\pi_q(n-3,n-2)$ lines tangent to $S$ at $P$. so there are $\pi_q(0,n)\pi_q(n-3,n-2)$ point-line tangency pairs. Alternatively, each line contained within the hypersurface contributes $q+1$ points of tangency, while every other line contributes a unique point of tangency. We can equate these two ways of counting to see
    \[
        (q + 1)\ell_0 + \ell_1 = \pi_q(0,n)\pi_q(n-3,n-2).
    \]
    Solving the resultant system of equations shows that
    \[
        \ell_0 = \frac{(q^{n+1}-1)(q^{n-2}-1)}{(q^2-1)(q-1)}
    \]
    as claimed.
\end{proof}

There is one more ingredient we'll need before we can analyze the property of being \ptf.

\begin{lem}\label{hklem}
    For $S \in \SSFS_n(q+2)$, if $\pi$ is an $\Fq$-plane tangent to $S$, then $\pi$ has at least one $\Fq$-point of tangency with $S$.
\end{lem}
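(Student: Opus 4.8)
\emph{Plan and reduction.}
The plan is to push everything into the plane $\pi$, where $S\cap\pi$ becomes a plane curve, and then run a double count relating $\Fq$-points of tangency to $\Fq$-lines lying on $S$. If $\pi\subseteq S$ then $\pi\subseteq T_QS$ for every $\Fq$-point $Q\in\pi$, so every such $Q$ is already a point of tangency. So assume $\pi\not\subseteq S$. By Lemma~\ref{cbez} the set $C\colonequals S\cap\pi$ is identified with a degree-$(q+2)$ curve in $\pi\cong\Pp^2$, and since $S$ is space-filling and this identification is defined over $\Fq$, the curve $C$ is space-filling in $\pi$. The argument in the proof of Lemma~\ref{singtang} shows, pointwise, that for $Q\in\pi(\Fq)$ one has $\pi\subseteq T_QS$ if and only if $C$ is singular at $Q$; and the hypothesis that $\pi$ is tangent to $S$ says exactly that $C$ is singular at some (possibly non-$\Fq$) point of $\pi$. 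So the lemma reduces to: such a singular curve $C$ has a singular $\Fq$-point.

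\emph{The double count.}
Count pairs $(Q,L)$ with $Q\in\pi(\Fq)$ and $L$ an $\Fq$-line of $\pi$ through $Q$ with $L\subseteq T_QS$ (note $Q\in S$ automatically, as $S$ is space-filling). Summing over $L$ and using Proposition~\ref{clsfying-lines}: an $\Fq$-line of $\pi$ contained in $S$ contributes all $q+1$ of its $\Fq$-points, while every other $\Fq$-line of $\pi$ contributes exactly one pair. Summing over $Q$: the lines contributing at $Q$ are the $\Fq$-lines through $Q$ lying in the $\Fq$-flat $\pi\cap T_QS$, which (as $T_QS$ is a hyperplane) is all of $\pi$ precisely when $\pi\subseteq T_QS$, i.e. when $C$ is singular at $Q$ (then $q+1$ lines), and is a single line through $Q$ otherwise (then one line). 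Equating,
\[
    \#\{\,Q\in\pi(\Fq)\ :\ C\text{ singular at }Q\,\}\;=\;\#\{\,\Fq\text{-lines of }\pi\text{ contained in }S\,\}.
\]
Hence it suffices to produce one $\Fq$-line inside $C=S\cap\pi$, or directly one singular $\Fq$-point of $C$.

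\emph{Routine cases.}
If $C$ is non-reduced, the space-filling lower bound $\deg\geq q+1$ (Remark~\ref{minimal degree}) forces the repeated part of $C$ to be a single line, and this line is $\Fq$-rational (its Frobenius orbit would otherwise have degree at least $2$), so $C$ contains an $\Fq$-line. So assume $C$ is reduced, and assume $C$ has no singular $\Fq$-point (otherwise we are done). Then no $\Fq$-point of $\pi$ lies on two components of $C$, and an $\Fq$-point on a non-$\Fq$-rational component would also lie on its conjugate component; hence every $\Fq$-point of $\pi$ lies on exactly one $\Fq$-rational component of $C$. If some such component is a line we are done, so assume every $\Fq$-rational component of $C$ has degree $\geq 2$. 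Finally, if the minimal Galois orbit of singular points of $C$ contained two points lying on a common $\Fq$-line $\Lambda$, then $\Lambda$ would meet $C$ with total multiplicity at least $2\cdot 2+(q+1)>q+2=\deg C$, forcing $\Lambda\subseteq C$ by B\'ezout and again producing an $\Fq$-line in $C$; so every singular Galois orbit of $C$ has size $\geq 3$ and spans $\pi$.

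\emph{The crux.}
It remains to handle the case where $C$ is reduced, has no $\Fq$-line component, every $\Fq$-rational component has degree $\geq 2$, and $C$ is singular only along Galois orbits of size $\geq 3$. Writing $C=\bigcup D_i$ with the $D_i$ the $\Fq$-rational irreducible components (degrees $d_i\geq 2$, $\sum d_i\leq q+2$), the previous step shows the $q^2+q+1$ $\Fq$-points of $\pi$ are partitioned among the $D_i$, so a point-count estimate for absolutely irreducible plane curves over $\Fq$ (in the spirit of the Homma–Kim bounds of \cite{hk-bound} and \cite{hk-singular}, or of the degree-$(q+2)$ analysis of \cite{transverse-free}) forces $C$ to consist of a single component of degree exactly $q+2$ carrying the maximal possible number $q^2+q+1$ of $\Fq$-rational points; the classification of these extremal curves then contradicts $C$ being singular. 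Establishing and invoking this extremality statement is the main obstacle — everything before it is bookkeeping with B\'ezout, Frobenius, and the double count above.
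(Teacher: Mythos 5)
Your reduction to the plane is the same as the paper's, and your double count is correct: it yields the clean identity that the number of singular $\Fq$-points of $C=S\cap\pi$ equals the number of $\Fq$-lines of $\pi$ contained in $S$ (a planar analogue of the count in Proposition~\ref{inclines}). The non-reduced case and the exclusion of Galois orbits of singular points of size $1$ or $2$ are also sound. However, there is a genuine gap exactly where you flag it. The ``crux'' paragraph asserts, without proof or a precise citation, that a reduced plane-filling curve of degree $q+2$ with no singular $\Fq$-point must be a single absolutely irreducible curve of degree $q+2$ carrying $q^2+q+1$ rational points, and that a classification of such extremal curves then rules out singularities. Neither claim is established, and the second in particular cannot follow from a generic ``extremal curves are smooth'' principle: singular plane-filling curves of degree $q+2$ certainly exist (e.g.\ a line times a degree-$(q+1)$ plane-filling curve), so what must be excluded is precisely a singular one all of whose singularities are non-rational. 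That dichotomy --- a plane-filling curve of degree $q+2$ is singular if and only if it is singular at an $\Fq$-point --- is the entire content of the lemma once the reduction to $\pi$ is made, and it is exactly what the paper imports from Claims 1 and 2 in the proof of Theorem 3.2 of \cite{hk-singular}. Your route would have to reprove that Homma--Kim analysis to close.

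In short: everything up to the crux is correct bookkeeping, and the tangency-count identity is a nice observation, but the proof is not complete; the missing step is not a technicality but the substance of the statement.
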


\begin{proof}
    We are done if $\pi \subset S$ so assume otherwise. By Lemma \ref{cbez}, $S\cap\pi$ is identified with a curve in $\Pp^2$ of degree $q + 2$. Since $S$ is \spf, $S\cap\pi$ is necessarily plane-filling. Moreover by Lemma~\ref{singtang}, we know the curve $S\cap\pi$ is singular. By Claims 1 and 2 in the proof of Theorem 3.2 in \cite{hk-singular}, a plane-filling curve of degree $q + 2$ is singular if and only if it is singular at an $\Fq$-point. This is precisely our desired result.
\end{proof}

In dimension $n=3$ we have the following corollary.

\begin{cor}\label{ptplcor}
    If $S \in \SSFS_3(q + 2)$ is \ptf\, then there exists a perfect matching between $\Fq$-planes and $\Fq$-points induced by tangency to $S$.
\end{cor}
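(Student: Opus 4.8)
The plan is to show that the assignment of each $\Fq$-point to its tangent plane to $S$ is itself a perfect matching, so that no appeal to Hall's theorem is needed. Since $S$ is \spf, every $P\in\Pp^3(\Fq)$ lies on $S$, and since $S$ is smooth the tangent plane $T_PS$ is a genuine plane; because $f$ has coefficients in $\Fq$ and $P$ has coordinates in $\Fq$, the linear form $\sum_i\tfrac{\partial f}{\partial x_i}(P)x_i$ defining $T_PS$ has coefficients in $\Fq$, so $T_PS$ is an $\Fq$-plane. This gives a map $\Phi\colon\Pp^3(\Fq)\to\{\Fq\text{-planes in }\Pp^3\}$, $\Phi(P)=T_PS$. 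The point to record is that, because $S$ is smooth at $P$, a plane $\pi$ is tangent to $S$ at $P$ precisely when $\pi\subset T_PS$, i.e.\ precisely when $\pi=T_PS=\Phi(P)$; hence the edge set $M=\{(\Phi(P),P):P\in\Pp^3(\Fq)\}$ consists of genuine tangency pairs, and every $\Fq$-point lies in exactly one edge of $M$.

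Next I would show $\Phi$ is surjective. Let $\pi$ be any $\Fq$-plane. By the \ptf\ hypothesis $\pi$ is tangent to $S$, so Lemma~\ref{hklem} provides an $\Fq$-point $P$ of tangency between $\pi$ and $S$; by the observation above, $\Phi(P)=\pi$. Thus every $\Fq$-plane lies in at least one edge of $M$.

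To finish, I would compare cardinalities. By projective self-duality of point--hyperplane counts (or directly from the formula, $\pi_q(0,3)=\pi_q(2,3)=q^3+q^2+q+1$), the number of $\Fq$-points equals the number of $\Fq$-planes. Summing $\#\Phi^{-1}(\pi)$ over all $\Fq$-planes $\pi$ yields $\#\Pp^3(\Fq)$, which equals the number of $\Fq$-planes; since each summand is at least $1$ by surjectivity, each is exactly $1$. Therefore every $\Fq$-plane also lies in exactly one edge of $M$, so $M$ is the desired perfect matching between $\Fq$-planes and $\Fq$-points induced by tangency to $S$.

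I do not expect a serious obstacle here: the real content — that a tangent $\Fq$-plane must touch $S$ at an $\Fq$-point — is exactly Lemma~\ref{hklem}, and the ``point side'' of the graph having degree one is immediate from smoothness together with the \spf\ hypothesis. The only delicate bookkeeping is checking that ``$\pi$ tangent to $S$ at $P$'' forces $\pi=T_PS$ when $S$ is smooth, which is what turns the point-to-plane correspondence into an honest function rather than a mere incidence relation.
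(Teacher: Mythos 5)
Your proposal is correct and follows essentially the same route as the paper: both arguments rest on the equality $\pi_q(0,3)=\pi_q(2,3)$, the uniqueness of the tangent plane at each point (smoothness plus \spf), and Lemma~\ref{hklem} supplying an $\Fq$-point of tangency on every plane, with a finite counting step to upgrade surjectivity to bijectivity. Your write-up just makes the point-to-plane map $\Phi$ and the cardinality bookkeeping more explicit than the paper does.
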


\begin{proof}
    Note that $\pi_q(0,3)=\pi_q(2,3)$, i.e. the number of $\Fq$-points and planes in $\Pp^3$ are the same. Moreover no two planes can be tangent to $S$ at the same point because $S$ is smooth. The perfect matching therefore follows from Lemma \ref{hklem}.
\end{proof}


Our strategy for analyzing smooth, \spf\ \ptf\ hypersurfaces will be to show that a perfect matching is never attainable. Namely we will show that for every $S \in \SSFS_3(q+2)$, there exists an $\Fq$-plane that is tangent to $S$ at more than one $\Fq$-point.

\begin{thm}{\label{qp2NeverPTF}}
    When $q$ odd, there do not exist \ptf\ hypersurfaces $S_f \in \SSFS_3(q + 2)$.
\end{thm}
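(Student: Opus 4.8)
The plan is to assume $S$ is \ptf\ and reach a contradiction. Under \ptf\ every $\Fq$-plane is tangent to $S$, so by Lemma~\ref{hklem} each $\Fq$-plane has at least one $\Fq$-point of tangency; since each $\Fq$-point $P$ lies on $S$ and has the single tangent plane $T_PS$, and since $\#\{\Fq\text{-planes}\}=\#\{\Fq\text{-points}\}$, counting tangent $(\text{plane},\text{point})$ pairs two ways forces every $\Fq$-plane to be tangent to $S$ at \emph{exactly} one $\Fq$-point — this is the perfect matching of Corollary~\ref{ptplcor}. Thus it suffices to produce a single $\Fq$-plane tangent to $S$ at two distinct $\Fq$-points. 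The raw material will be the $q^2+1$ included lines of Proposition~\ref{inclines}, which I call $\mathcal{L}$, exploited through a B\'{e}zout argument that uses $q$ being odd.

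First I would handle $\Fq$-planes containing two included lines. If $L_1,L_2\in\mathcal{L}$ meet at a point $Q$, then both lines lie in $T_QS$, so their span $\pi$ equals $T_QS$ and is tangent to $S$ at $Q$; by Lemma~\ref{cbez} we may write $\pi\cap S=L_1\cup L_2\cup C$ with $\deg C=q$. Since $q$ is odd, $\deg L_1\cdot\deg C=q$ is odd, while the non-$\Fq$-points of $L_1\cap C$ occur in Frobenius-conjugate pairs contributing even intersection multiplicity; so B\'{e}zout forces an $\Fq$-point $R\in L_1\cap C$. As $R$ lies on two components of $\pi\cap S$, that curve is singular at $R$, hence by Lemma~\ref{singtang} the plane $\pi$ is tangent to $S$ at $R$; if $R\neq Q$ we are finished. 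Ruling out the residual case (every such $R$ equals $Q$) I expect to force all included lines through $Q$ to be concurrent at $Q$, and ultimately to reduce to the situation in which $\mathcal{L}$ is a partial spread of the maximal size $q^2+1$, i.e.\ a spread.

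In the spread case I would analyze a single included line $\ell$. The $q+1$ $\Fq$-planes through $\ell$ are exactly the tangent planes $T_PS$ with $P\in\ell(\Fq)$ (two points of $\ell$ with equal tangent plane would give a plane tangent at two points), so for each $\pi=T_PS$ we have $\pi\cap S=\ell\cup C_\pi$ with $\deg C_\pi=q+1$ and $\pi$ tangent to $S$ only at $P$; hence $P$ is the unique $\Fq$-point of $\ell\cap C_\pi$, and because $\deg\ell\cdot\deg C_\pi=q+1$ is even the conjugate-pair parity argument now forces the local intersection multiplicity $I_P(\ell,C_\pi)$ to be even, so $\ell$ and $C_\pi$ are tangent at $P$. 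In particular each $C_\pi$ is a plane-filling curve of degree $q+1$ whose $\Fq$-points are precisely the $q^2$ $\Fq$-points of $\pi$ off $\ell$ together with $P$. The hard part will be extracting a contradiction from this rigid configuration: I expect one must invoke the structure of (singular) plane-filling curves from \cite{hk-singular} — the same Theorem~3.2 used for Lemma~\ref{hklem} — applied to the curves $C_\pi$, or directly to $T_QS\cap S$, to show that for odd $q$ a singular plane-filling curve of degree $q+2$ over $\Fq$ cannot have only one singular $\Fq$-point, which by Lemma~\ref{singtang} contradicts the matching. Degenerate sub-cases are cheaper: if $T_QS\cap S$ were a union of $q+2$ lines through $Q$, counting $\Fq$-points gives $1+jq=q^2+q+1$, so $j=q+1$ of the lines are $\Fq$-rational and a further component remains that cannot be Galois-stable. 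I would also keep in reserve a Riemann--Hurwitz analysis of the separable degree-$(q+1)$ map $\mu_\ell\colon\ell\to\{\text{planes}\supseteq\ell\}$, which the above shows is a bijection on $\Fq$-points ramifying to even order over every $\Fq$-point of its target — a state of affairs I would try to exclude when $q$ is odd.
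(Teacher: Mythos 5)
Your reduction to Corollary~\ref{ptplcor} and the dichotomy ``two of the $q^2+1$ included lines meet'' versus ``they form a spread'' are sound, but the argument has two genuine problems. First, the parity step you lean on twice is false as stated: the non-rational points of an $\Fq$-rational divisor on a line form Frobenius \emph{orbits} of size at least $2$, not pairs, and an orbit of odd size $r\geq 3$ contributes $r\cdot m$ to the intersection number, which can be odd. Concretely, $L_1\cap C$ is cut out on $L_1\cong\Pp^1$ by a degree-$q$ binary form over $\Fq$, which may well be irreducible, giving a single Galois orbit of size $q$ and no rational point $R$ at all; likewise the claim in the spread case that $I_P(\ell,C_\pi)$ must be even does not follow from ``$q+1$ is even.'' Second, and more decisively, the two hard cases are never closed: for the residual case you write ``I expect to force\dots'' and for the spread case ``the hard part will be extracting a contradiction\dots I expect one must invoke\dots''. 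No contradiction is actually derived, so this is a proof plan with its central steps missing rather than a proof.

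For comparison, the paper closes the argument algebraically using a single included line $L\subset S$ from Proposition~\ref{inclines}. Writing $f=\sum L_{i,j}(x_i^qx_j-x_ix_j^q)$ (the ideal of polynomials vanishing on $\Pp^3(\Fq)$), the Gauss map $P\mapsto\nabla f(P)$ restricted to $L(\Fq)$ is given by a pair of degree-$2$ binary forms $(x:y)\mapsto(P(x,y):Q(x,y))$; if this were a bijection of $\Pp^1(\Fq)$, two of its fibers would be cut out by $k_0x^2$ and $k_1y^2$, whence a third fiber is cut out by $k_0x^2+k_1y^2$, which has either $0$ or $2$ rational zeros when $q$ is odd. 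This breaks the matching of Corollary~\ref{ptplcor} directly. If you want to salvage your geometric route, the spread case is where the real content lies, and it requires a structural input (e.g.\ from \cite{hk-singular} or an honest analysis of the map $\mu_\ell$) that you have not supplied.
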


\begin{proof}
    Consider the map $\nabla{f}: S \to (\Pp^3)^*$ which maps $P\in S$ to the point $\nabla f(P)\in(\Pp^3)^*$ that is dual to the tangent plane of $S$ at $P$. This map is well-defined since $f$ is smooth. Our goal now is to prove $\nabla f$ is not bijective on $\Fq$-points. Then, using Corollary~\ref{ptplcor} we will show $S=S_f$ is not \ptf.
        
    Following \cite{hk-singular}, the set of polynomials in $\Fq[x_0,x_1,x_2,x_3]$ that vanish on all of $\Pp^3(\Fq)$ is generated by $\{X_i^qX_j - X_iX_j^q: 0 \leq i < j \leq 3\}$. Therefore the polynomial $f$ defining $S$ can be written
    \[
        f(x_0, x_1, x_2, x_3) = \sum_{0 \leq i < j \leq 3} L_{i,j}(x_i^qx_j - x_ix_j^q) = 0
    \]
    for some linear polynomials $L_{i,j}$.
        
    For the purposes of symmetry we'll also define $L_{j,i} = L_{i,j}$ for $i < j$. Let $[i,j]$ be $1$ when $i > j$ and $-1$ when $i < j$. Then at $\Fq$-points we know each $x_i^qx_j-x_ix_j^q = 0$ and since $q=0$ in $\Fq$, $\frac{\partial}{\partial{x_i}}(x_i^qx_j-x_ix_j^q)=x_j^q$. Hence restricted to $\Fq$-points,
    \[
        \frac{\partial{f}}{\partial{x_i}} = \sum_{j\neq i}[j,i]L_{i,j}x_j.
    \]
    As such the restriction of $\nabla{f}$ to $\Pp^3(\Fq)$ can be represented as a map of degree $2$.

    Now consider a line $L$ contained in $S$, which exists by Proposition~\ref{inclines}. At any $\Fq$-point $P\in L$, the tangent plane $\pi$ necessarily includes $L$. So we can say $L\subset\pi$, or in other words, $\pi^*\in L^*$. As such, the restriction of $\nabla f$ to the $\Fq$-points of $L$ corresponds to a map $\nabla f\vert_{L(\Fq)}:\Pp^1(\Fq)\to(\Pp^1(\Fq))^*$. And just as above, this restriction can be represented as a map of degree $2$.
    
    We will now proceed to show that this restriction is not bijective, which will imply the original map $\nabla f:\Pp^3(\Fq)\to(\Pp^3(\Fq))^*$ is not bijective.

    Algebraically we know that the degree 2 map $\nabla f\vert_{L(\Fq)}$ can be written as
    \[
        (x:y)\mapsto (P(x, y): Q(x, y))
    \]
    for $P, Q$ homogeneous polynomials of degree 2. Thus a preimage under $\nabla{f}$ can be computed as the solution to a degree 2 equation, and generically there are either 0 or 2 solutions over $\Fq$. More precisely, for fixed $(u:v)\in\Pp^1(\Fq)$, $\nabla{f}\vert_{L(\Fq)}(x:y) = (u:v)$ if and only if
    \[
        (vP - uQ)(x, y) = 0.
    \]
    
    Suppose $\nabla f\vert_{L(\Fq)}$ is bijective. Then there exists $(u_0: v_0) \in \Pp^1(\Fq)$ such that $(0:1)$ is the sole zero of $(v_0P - u_0Q)$, and therefore $(v_0P - u_0Q) = k_0x^2$ for some nonzero $k_0 \in \Fq$. Similarly, we can find $(u_1: v_1) \in \Pp^1(\Fq)$ such that $(v_1P - u_1Q) = k_1y^2$ for some nonzero $k_1 \in \Fq$. Therefore, $\nabla{f}\vert_{L(\Fq)}(x:y) = (u_0 + u_1: v_0 + v_1) \in \Pp^1(\Fq)$ if and only if
    \[
        k_0x^2 + k_1y^2 = 0.
    \]
    The solutions to this equation always come in non-degenerate pairs when $1 \neq -1$, which is true for $q$ odd. We thus conclude that $\nabla f\vert_{L(\Fq)}$ is not bijective.
    
    As a result, $\nabla f$ does not induce a perfect correspondence between $\Fq$-points and $\Fq$-planes of $\Pp^3$ and so by Corollary~\ref{ptplcor}, $S_f$ is not \ptf.
\end{proof}

\section{Examples of transverse-free hypersurfaces: cones}\label{sec:cones}

In this section, we'll explore a construction of \ktf\ hypersurfaces in high-dimensional projective spaces.

\begin{defn}{(Cones)}\label{def:cone}
    Given a hypersurface $S_f \subseteq \Pp^n$, define the \textit{cone over $S$ in dimension $n+k$} to be the hypersurface $S_F \subseteq \Pp^{n+k}$, where
    \[
        F(x_0,x_1,\ldots,x_{n+k}) = f(x_0,x_1,\ldots,x_n).
    \]
\end{defn}

We call these constructed hypersurfaces \emph{cones} based on another way to construct them. Given any point $(x_0:x_1:\cdots:x_n)\in S_f$ and any point $(x_{n+1}:\cdots:x_{n+k})\in\Pp^{k-1}$, we can manipulate their representatives in $\overline{\Fq}^{n+1}$ and $\overline{\Fq}^k$ to construct the line
\[
    \{(ax_0:ax_1:\cdots:ax_n:bx_{n+1}:\cdots:bx_{n+k})\mid (a:b)\in\Pp^1\}.
\]

From the homogeneity of $f$ and $F$, every point on this line is contained in the hypersurface $S_F$. And for fixed $P\in S$ and $Q$ ranging over $\Pp^{k-1}$, all these lines pass through the same point $P$, just as the lines of a cone pass through the vertex. In the special case $k=1$, $\Pp^0$ is a point and the hypersurface $S_F$ looks like a simple cone over $S$.

It is also worth noting the degree to which these hypersurfaces are singular. Even if we start with a smooth hypersurface $S$ and form its cone in one more dimension, the resulting hypersurface will be singular at the point $(0:0:\ldots,0:1)$. In general we can classify the singularities of $S_F$ in terms of the singularities of $S$ and the parameter $k$.

\begin{lem}\label{sing-cone}
    Let $S\subset\Pp^n$ be a degre $d\geq 2$ hypersurface with a set $\Sigma\subset S$ of singular points. The cone $S_k\subset\Pp^{n+k}$ is singular at $(x_0:x_1:\cdots:x_{n+k})$ if and only if $(x_0:x_1:\cdots:x_n)\in\Sigma$ or $x_0=x_1=\cdots=x_n=0$.
\end{lem}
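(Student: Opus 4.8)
The plan is to compute the gradient of $F$ directly and read off when it vanishes, using that $F$ only involves the first $n+1$ variables. Write $P = (x_0:x_1:\cdots:x_{n+k})$ and let $\bar P = (x_0:\cdots:x_n)$ be its image under forgetting the last $k$ coordinates (when this is well-defined, i.e. when not all of $x_0,\ldots,x_n$ vanish). First I would observe that $\frac{\partial F}{\partial x_j} = \frac{\partial f}{\partial x_j}(x_0,\ldots,x_n)$ for $0\le j\le n$, while $\frac{\partial F}{\partial x_j} = 0$ identically for $n+1\le j\le n+k$, since $F$ does not depend on those variables. Hence $\nabla F(P) = \bigl(\nabla f(\bar P),\, 0,\ldots,0\bigr)$, so $\nabla F(P) = 0$ if and only if $\nabla f(\bar P) = 0$ (interpreting $\nabla f$ on the representative in $\overline{\Fq}^{\,n+1}$).

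Next I would split into the two cases that appear in the statement. If $x_0 = x_1 = \cdots = x_n = 0$, then $\bar P$ is the zero vector; since $\deg f = d\ge 2$, each partial $\frac{\partial f}{\partial x_j}$ is homogeneous of degree $d-1\ge 1$ and therefore vanishes at the origin, so $\nabla F(P) = 0$. But we must also check that $P$ actually lies on $S_k$: indeed $F(P) = f(0,\ldots,0) = 0$ as $f$ is homogeneous of positive degree, so $P\in S_k$ and $S_k$ is singular there. If instead not all of $x_0,\ldots,x_n$ vanish, then $\bar P$ is a genuine point of $\Pp^n$, and $P\in S_k$ iff $f(\bar P) = 0$ iff $\bar P\in S$; in that case $S_k$ is singular at $P$ iff $\nabla f(\bar P) = 0$, i.e. iff $S$ is singular at $\bar P$, i.e. iff $\bar P\in\Sigma$. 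Combining the two cases gives exactly the claimed characterization.

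I do not expect a serious obstacle here; the one point requiring a little care is the bookkeeping about representatives — $\nabla f$ and $\nabla F$ are only defined on the affine vector spaces $\overline{\Fq}^{\,n+1}$ and $\overline{\Fq}^{\,n+k+1}$, not on projective space — but the paper already notes that vanishing of the gradient is independent of the choice of representative (for homogeneous $f$, $\nabla f(\lambda \tilde P) = \lambda^{d-1}\nabla f(\tilde P)$), so this is routine. The only genuinely substantive input is the hypothesis $d\ge 2$, which is what makes $\nabla f$ vanish at the origin and hence forces the whole "vertex" $\{x_0=\cdots=x_n=0\}$, a $\Pp^{k-1}$, into the singular locus; without it (i.e. if $S$ were a hyperplane) the statement would fail. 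I would flag this explicitly at the point where the origin case is handled.
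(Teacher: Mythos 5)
Your proposal is correct and follows essentially the same route as the paper's proof: compute $\nabla F$ as $\nabla f$ padded with $k$ zeros, then split into the case $(x_0:\cdots:x_n)\in S$ and the vertex case $x_0=\cdots=x_n=0$, using homogeneity of the degree-$(d-1)$ partials to handle the latter. If anything you are slightly more careful than the paper, which asserts the vertex is singular without explicitly checking $F$ vanishes there and which misstates the degree of $\nabla f$ as $d$ rather than $d-1$; your version gets both details right.
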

\begin{rmk}
     The latter case corresponds to the space $\Pp^{k-1}\subset\Pp^{n+k}$. In analogy with the case $k=1$ described above, we think of this subspace as the vertex of the cone, which is expected to be singular.
\end{rmk}

\begin{proof}
    First note $\nabla F(x_0,\ldots,x_{n+k})$ is simply $\nabla f(x_0,\ldots,x_n)$ appended with $k$ 0's. Also, $F(x_0,\ldots,x_{n+k})=0$ precisely when $f(x_0,\ldots,x_n)=0$, which happens either when $(x_0:\cdots:x_n)\in S$ or $x_0=\cdots=x_n=0$.

    In the first case, where $(x_0:\cdots:x_n)\in S$, the point $(x_0:\cdots:x_{n+k})$ is a singularity for $S_F$ if $\nabla F(x_0,\ldots,x_{n+k})=0$, which happens precisely if $\nabla f(x_0,\ldots,x_n)=0$, i.e. $(x_0:\cdots:x_n)$ is a singularity for $S_f$. In the second case, where $x_0=\cdots=x_n=0$, we are at the vertex described above, where $\nabla f(0,\ldots,0) = 0$ as $\nabla f$ is homogeneous of degree $d\geq 1$.
\end{proof}

Now we can see that our construction of cones preserves the property of being \ktf.

\begin{thm}
    Let $S_f\subset\Pp^n$ be a \ktf\ hypersurface. For any $m\geq 1$, the cone $S_F\subset\Pp^{n+m}$ from Definition~\ref{def:cone} is also \ktf.
\end{thm}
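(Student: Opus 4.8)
The plan is to reduce a $k$-flat tangency question in $\Pp^{n+m}$ to the corresponding question in $\Pp^n$, using the cone structure from Definition~\ref{def:cone}. First I would fix an arbitrary $\Fq$-defined $k$-flat $\pi\subset\Pp^{n+m}$ and aim to show $\pi$ is tangent to $S_F$. The key computational fact is the one recorded in the proof of Lemma~\ref{sing-cone}: $\nabla F(x_0,\ldots,x_{n+m})$ is just $\nabla f(x_0,\ldots,x_n)$ with $m$ trailing zeros appended, and $F$ vanishes exactly where $f(x_0,\ldots,x_n)=0$. In particular the ``vertex'' $V=\Pp^{m-1}\subset\Pp^{n+m}$ cut out by $x_0=\cdots=x_n=0$ lies entirely in $S_F$, and every point of $V$ is singular on $S_F$ by Lemma~\ref{sing-cone}, hence has tangent space all of $\Pp^{n+m}$.

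The main case split is according to whether $\pi$ meets the vertex. If $\pi\cap V(\Fq)\neq\emptyset$, pick an $\Fq$-point $Q\in\pi\cap V$; then $Q\in S_F$ and $S_F$ is singular at $Q$, so $T_QS_F=\Pp^{n+m}\supseteq\pi$, and $\pi$ is tangent to $S_F$ at $Q$ by definition. If instead $\pi\cap V=\emptyset$, then the projection away from $V$ (the linear map forgetting the last $m$ coordinates) restricts to an isomorphism of $\pi$ onto a $k$-flat $\bar\pi\subset\Pp^n$ — here I would check that disjointness from the vertex exactly guarantees this map is injective on $\pi$, so $\bar\pi$ is genuinely a $k$-flat and not something of smaller dimension. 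By hypothesis $S_f$ is \ktf, so $\bar\pi$ is tangent to $S_f$: there is an $\Fq$-point $\bar P\in\bar\pi\cap S_f$ with $\bar\pi\subseteq T_{\bar P}S_f$ (interpreted as in Lemma~\ref{singtang}, i.e. $S_f\cap\bar\pi$ is singular at $\bar P$, or $\bar\pi\subseteq S_f$). Lifting $\bar P$ back to the unique point $P\in\pi$ over it, the cone relations give $F(P)=f(\bar P)=0$ and $\nabla F(P)=(\nabla f(\bar P),0,\ldots,0)$, so the tangent hyperplane $T_PS_F$ is the pullback under the projection of $T_{\bar P}S_f$ together with the whole vertex direction; since $\bar\pi\subseteq T_{\bar P}S_f$ and $\pi$ projects into $\bar\pi$, we get $\pi\subseteq T_PS_F$, i.e. $\pi$ is tangent to $S_F$ at $P$. (If the degenerate subcase $\bar\pi\subseteq S_f$ occurs, then $\pi\subseteq S_F$ directly from $F(x)=f(x_0,\ldots,x_n)$.)

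Finally, by induction on $m$ it suffices to treat $m=1$, or one can simply run the argument for general $m$ at once as above. The step I expect to be the main obstacle is the bookkeeping in the second case: carefully relating the tangency datum for $\bar\pi$ and $S_f$ in $\Pp^n$ — which Lemma~\ref{singtang} phrases via singularity of the intersection after the coordinate change of Lemma~\ref{cbez} — to an honest inclusion $\pi\subseteq T_PS_F$ in $\Pp^{n+m}$, and making sure the identification of $\pi$ with $\bar\pi$ is compatible with these intersection-with-a-flat constructions. A clean way to sidestep some of this is to observe that $S_F\cap\pi$, viewed in the coordinates of Lemma~\ref{cbez}, is defined by exactly the same polynomial as $S_f\cap\bar\pi$ (since $F$ restricted to $\pi$ only sees the first $n+1$ coordinates, which are free coordinates on $\bar\pi$), so $S_F\cap\pi$ is singular iff $S_f\cap\bar\pi$ is; then Lemma~\ref{singtang} applied in both $\Pp^{n+m}$ and $\Pp^n$ immediately transfers tangency. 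I would likely present the proof in this last form, as it avoids gradient computations entirely.
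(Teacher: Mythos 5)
Your proposal is correct and follows essentially the same route as the paper's proof: split the $k$-flats according to whether they meet the vertex, use the singularity of the cone along the vertex in the first case, and in the second case identify $S_F\cap\pi$ with $S_f\cap\bar\pi$ (defined by the same polynomial after the coordinate change of Lemma~\ref{cbez}) to transfer tangency from $\Pp^n$. The only cosmetic difference is that you treat general $m$ in one step by projecting away from the vertex $\Pp^{m-1}$, whereas the paper inducts on $m$ and handles one added coordinate at a time via the substitution $x_{n+1}=a_0x_0+\cdots+a_nx_n$; these amount to the same identification.
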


\begin{proof}
    To prove this theorem, we will show that the cone in one dimension higher is also \ktf. The result for cones in higher dimensions then follows inductively. To that end, let $F(x_0,\ldots,x_{n+1}) = f(x_0,\ldots,x_n)$.

    Based on the singularities discussed in Lemma~\ref{sing-cone}, we can say that any $k$-flat passing through the vertex $(0:\cdots:0:1)$ of the cone $S_F$ is not transverse to $S_F$ since the cross section also has a singularity. at that point. A $k$-flat in $\Pp^{n+1}$ is defined by $n+1-k$ linear functions in the variables $x_0,\ldots,x_{n+1}$. The $k$-flat contains the point $(0:\cdots:0:1)$ if each of those linear functions has the form
    \[
        a_0x_0+a_1x_1+\cdots+a_nx_n.
    \]

    A $k$-flat in $\Pp^{n+1}$ not containing $(0:\cdots:0:1)$ is also defined by $n+1-k$ linear functions, at least one of which can be written in the form
    \[
        a_0x_0+a_1x_1+\cdots+a_nx_n - x_{n+1}.
    \]
    So for a given $k$-flat $\pi$, we can write $\pi = \pi_0\cap\pi_1\cap\cdots\cap\pi_{n-k}$ as the intersection of hyperplanes, where $\pi_0$ has the form above.

    Now, just as in Lemma~\ref{cbez}, the intersection of $S_F$ and $\pi_0$ can be identified with a hypersurface in $\Pp^n$, and the identification simply replaces $x_{n+1}$ in each function by the expression $a_0x_0+\cdots+a_nx_n$. Since $F$ has no dependence on $x_{n+1}$, the intersection hypersurface is defined by $f$, namely it is $S$. Identifying the intersections $\pi_1\cap\pi_0,\ldots,\pi_{n-k}\cap\pi_0$ with hyperplanes $\pi'_1,\ldots,\pi'_{n-k}\subset\Pp^n$ we see they intersect in a $k$-flat $\pi'=\pi'_1\cap\cdots\pi'_{n-k}$. Now we can see the intersection $S_F\cap\pi$ is identified via Lemma~\ref{cbez} with $S\cap\pi'$. Since $S$ is \ktf, this intersection is singular, so the original $k$-flat $\pi$ is tangent to $S_F$.

    Since $S_F$ is tangent to all $k$-flats that contain the vertex $(0:\cdots:0:1)$, as well as all $k$-flats that don't, it is \ktf.
\end{proof}

\section{Curves in the inversive plane}\label{sec:ctf}

In this section we investigate low-degree, \ctf\ inversive curves, with the goal of giving bounds on degrees in which \ctf\ curves are guaranteed to exist.


In analog to the \spf\ hypersurfaces of Section~\ref{ndimproj}, one can search for \ctf\ curves that contain $\Ovd(\Fq)$. If $\Ovd(\Fq)\subset S_f$ and $C_f$ is a smooth curve, then an application of B\'{e}zout's theorem shows the degree of $f$ must be at least $\lceil \frac{q+1}{2} \rceil$. Unfortunately we found equality cannot hold, i.e. no \ctf\ curves of these degrees containing $\Ovd(\Fq)$ exist.

Nevertheless, we may apply some linear algebraic tools, together with combinatorial results about finite projective and inversive geometries, to construct \ctf\ curves in relatively small degrees. The next susbsection develops the linear algebraic tools we will use both in this section, and in Section~\ref{sec:hyp} below for hyperbolic curves.

\subsection{Tangency and singularity conditions for curves}\label{curve conditions}

We can construct \ctf\ inversive curves and \ltf\ hyperbolic curves similarly to the constructions of transverse free curves in $\Pp^2$ exhibited in \cite{freidin-asgarli}. The strategy is to specify a number of tangency and singularity conditions for curves to satisfy which will ensure the curve are transverse-free. These tangency and singularity conditions will impose a system of constraints on the coefficients of a polynomial defining such a curve.

An inversive curve is defined as $C_f=S_f\cap\Ovd$ for some homogeneous polynomial $f$, and a hyperbolic curve is defined by a $xw$-symmetric homogeneous polynomial. So consider the $\Fq$-vector space $\mathcal{F}(d)$ of degree $d$ homogeneous polynomials in $n+1$ variables, and the subspace $\mathcal{H}(d)$ of $xw$-symmetric polynomials. We must first know their dimensions.

\begin{lem}\label{dims}
    \[
        \dim(\mathcal{F}(d)) = \binom{d+3}{3}
    \]
    and
    \[
        \dim(\mathcal{H}(d))=\begin{cases}
            \frac{1}{2}(\binom{d+3}{3}+\frac{1}{4}(d+2)^2 & \text{$d$ even} \\ \frac{1}{2}(\binom{d+3}{3}+\frac{1}{4}(d+1)(d+3)) & \text{$d$ odd}.
        \end{cases}
    \]
\end{lem}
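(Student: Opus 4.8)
The plan is to compute each dimension by exhibiting an explicit monomial basis and counting. For $\dim(\mathcal{F}(d))$ the argument is entirely standard: a basis for $\mathcal{F}(d)$ consists of the monomials $x^a y^b z^c w^e$ with $a+b+c+e = d$, and the number of such monomials is the number of weak compositions of $d$ into $4$ parts, namely $\binom{d+3}{3}$. I would state this in one sentence and move on.

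The substance is the computation of $\dim(\mathcal{H}(d))$. First I would observe that the involution $f(x,y,z,w)\mapsto f(w,y,z,x)$ is a linear involution on the finite-dimensional space $\mathcal{F}(d)$, so $\mathcal{F}(d)$ decomposes as a direct sum of the $+1$-eigenspace (the $xw$-symmetric polynomials, which is $\mathcal{H}(d)$) and the $-1$-eigenspace (the $xw$-antisymmetric polynomials). Since we are over $\Fq$ with $q$ even, I should be a little careful here: the standard averaging trick $f\mapsto \tfrac12(f+f^*)$ is unavailable. However, the monomial basis is permuted by the involution — $x^a y^b z^c w^e \leftrightarrow x^e y^b z^c w^a$ — so I can instead argue combinatorially. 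Group the monomials of degree $d$ into orbits under the swap $a\leftrightarrow e$: each orbit is either a singleton (when $a=e$) or a pair (when $a\neq e$). For a pair-orbit $\{m, m^*\}$, the symmetric polynomial $m+m^*$ spans the symmetric part of $\mathrm{span}(m,m^*)$ and $m-m^* = m+m^*$ over characteristic $2$ — this is exactly why I must avoid eigenspace language and instead directly check that $\{m+m^* : \text{pair orbits}\} \cup \{m : a=e\}$ is a basis of $\mathcal{H}(d)$. Linear independence is clear since distinct orbits involve disjoint sets of monomials; spanning follows because any $xw$-symmetric $f$ has, for each pair orbit, equal coefficients on $m$ and $m^*$, so its restriction to $\mathrm{span}(m,m^*)$ is a multiple of $m+m^*$.

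It then remains to count orbits. The number of fixed monomials is the number of solutions to $2a + b + c = d$ with $a,b,c\ge 0$: writing $d-2a = b+c$, for each $a$ with $0\le a\le \lfloor d/2\rfloor$ there are $d-2a+1$ choices, and summing this arithmetic-like series gives $\lceil \tfrac14(d+2)^2\rceil$, which equals $\tfrac14(d+2)^2$ for $d$ even and $\tfrac14(d+1)(d+3)$ for $d$ odd. Call this quantity $F$. The remaining $\binom{d+3}{3} - F$ monomials fall into pair-orbits, contributing $\tfrac12\!\left(\binom{d+3}{3}-F\right)$ basis elements. Hence
\[
    \dim(\mathcal{H}(d)) = F + \tfrac12\!\left(\binom{d+3}{3} - F\right) = \tfrac12\binom{d+3}{3} + \tfrac12 F,
\]
which matches the stated formula after substituting the two cases for $F$. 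The only genuine obstacle is the characteristic-$2$ subtlety flagged above — making sure the decomposition into a symmetric basis is justified by direct monomial bookkeeping rather than by an averaging projector — and after that the proof is just the two elementary counting computations.
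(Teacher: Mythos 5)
Your proposal is correct and follows essentially the same route as the paper: count the $\binom{d+3}{3}$ monomials for $\mathcal{F}(d)$, take as a basis of $\mathcal{H}(d)$ the $xw$-symmetric monomials together with the symmetrized pairs $m+m^*$, count the solutions of $2a+b+c=d$, and conclude $\dim(\mathcal{H}(d))=\tfrac12\bigl(\binom{d+3}{3}+F\bigr)$; your explicit verification that this set is a basis (rather than invoking an averaging projector, which fails in characteristic $2$) is a welcome extra care the paper leaves implicit. One tiny slip: the number of fixed monomials is $\bigl\lfloor\tfrac14(d+2)^2\bigr\rfloor$, not $\bigl\lceil\tfrac14(d+2)^2\bigr\rceil$ (e.g.\ $d=1$ gives $2$, not $3$), though your final case-by-case values are correct.
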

\begin{proof}
    The space $\mathcal{F}(d)$ is spanned by homogeneous monomials, that is polynomials of the form $x^Ay^Bz^Cw^D$ with $A+B+C+D=d$. It is well known that there are $\binom{d+3}{3}$ choices of $A,B,C,D\geq 0$ satisfying this equation.
    
    The space $\mathcal{H}(d)$ is spanned by $xw$-symmetric monomials of degree $d$, alongside polynomials of the form $m(x,y,z,w)+m(w,y,z,x)$ for non-$xw$-symmetric monomials $m$. The $xw$-symmetric monomials have the form $x^ay^bz^cw^a$ where $2a+b+c=d$. For each fixed $a$ between 0 and $d/2$, there are $d-2a+1$ choices of exponents $b$ and $c$ to complete a monomial of degree $d$. Thus the total number of $xw$-symmetric monomials of degree $d$ is
    \[
        \sum_{a=0}^{\lfloor d/2\rfloor}(d-2a+1) = (d+1-\lfloor d/2\rfloor)\left(\lfloor d/2\rfloor+1\right) = \begin{cases} (d+2)^2/4 & d\text{ even}\\(d+1)(d+3)/4 & d\text{ odd}.\end{cases}
    \]

    Out of the original $\binom{d+3}{3}$ monomials of degree $d$, the remaining ones are not $xw$-symmetric. Pairing them up as described above, gives the rest of the basis for $\mathcal{H}(d)$. If $s$ is the number of $xw$-symmetric polynomials computed above, the dimension of $\mathcal{H}(d)$ is then
    \[
        s+\frac{1}{2}\left(\binom{d+3}{3}-s\right) = \frac{1}{2}\left(\binom{d+3}{3}+s\right).
    \]
\end{proof}

Now suppose $f$ is a homogeneous polynomial, or an $xw$-symmetric polynomial. Then it can be written as a linear combination of the basis for $\mathcal{F}(d)$, or the basis for $\mathcal{H}(d)$, described above. That is, we can write $f=a_1f_1+\cdots+a_Nf_N$ where the polynomials $f_i$ form a basis for the relevant space.

Now we can write conditions for the coefficients $a_1,\ldots,a_N$ satisfied by a polynomial $f$ that defines a curve with various conditions. We begin with the larger space $\mathcal{F}(d)$.

\begin{lem}\label{contains-pt}
    Fix $P\in\Fq^4$. The space of polynomials $f\in \mathcal{F}(d)$ with $f(P)=0$ has codimension 1 in $\mathcal{F}(d)$, i.e. dimension $\dim(\mathcal{F}(d))-1$.
\end{lem}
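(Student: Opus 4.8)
The plan is to realize the described space as the kernel of a single linear functional on $\mathcal{F}(d)$ and then to check that this functional is not identically zero.

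Concretely, I would introduce the evaluation map $\operatorname{ev}_P\colon\mathcal{F}(d)\to\Fq$ defined by $\operatorname{ev}_P(f)=f(P)$. Evaluation at a fixed point is additive in $f$ and commutes with scaling by elements of $\Fq$, so $\operatorname{ev}_P$ is $\Fq$-linear, and its target $\Fq$ is one-dimensional. The space of $f\in\mathcal{F}(d)$ with $f(P)=0$ is exactly $\ker(\operatorname{ev}_P)$, so by rank--nullity its codimension in $\mathcal{F}(d)$ equals $\dim_{\Fq}(\operatorname{im}(\operatorname{ev}_P))$, which is either $0$ or $1$. It therefore suffices to rule out codimension $0$ by exhibiting one polynomial in $\mathcal{F}(d)$ that does not vanish at $P$.

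For that, I would use that $P$, being (a representative of) a point, is a nonzero vector in $\Fq^4$; pick a variable $v\in\{x,y,z,w\}$ in which the corresponding coordinate of $P$ is nonzero. Then $f=v^d$ lies in $\mathcal{F}(d)$ and $f(P)$ is that coordinate raised to the $d$-th power, which is nonzero in $\Fq$. Hence $\operatorname{ev}_P$ is surjective, so $\ker(\operatorname{ev}_P)$ has codimension exactly $1$, i.e. dimension $\dim(\mathcal{F}(d))-1$, as claimed.

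There is essentially no obstacle here beyond the bookkeeping observation that the statement implicitly assumes $P\neq 0$ (if $P=0$ every positive-degree homogeneous polynomial vanishes at $P$ and the codimension is $0$), which is automatic since $P$ represents a point of $\Pp^3$. The same template---cutting out the kernel of a short list of $\Fq$-linear functionals on $\mathcal{F}(d)$ (or on $\mathcal{H}(d)$) and bounding its codimension by the number of functionals---is exactly what will be reused for the subsequent tangency and singularity conditions.
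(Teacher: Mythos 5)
Your proof is correct and follows essentially the same route as the paper: the vanishing condition is a single linear constraint (the kernel of the evaluation functional) on the coefficients of $f$ with respect to the monomial basis, hence codimension at most $1$. Your extra step of exhibiting $v^d$ with $v$ a coordinate where $P$ is nonzero actually tightens the paper's argument, which asserts codimension exactly $1$ without explicitly ruling out that the linear relation is trivial.
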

\begin{proof}
    The condition $f(P) = 0$ can be rewritten
    \[
        a_1f_1(P) + \cdots + a_Nf_N(P) = 0.
    \]
    Each $f_i(P)$ is a constant, so this imposes one linear relation in the coefficients $a_1,\ldots,a_N$. Thus the space of polynomials satisfying this condition has codimension 1, i.e. dimension $dim(\mathcal{F}(d))-1$.
\end{proof}

\begin{lem}\label{tang-cond}
    Fix $P\in\Ovd(\Fq)$ and a line $L\subset T_P\Ovd$ containing $P$. The space of polynomials $f\in\mathcal{F}(d)$ defining curves $C_f$ tangent to $L$ at $P$ has codimension at most 2 in $\mathcal{F}(d)$.
\end{lem}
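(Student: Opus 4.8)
The plan is to mimic the proof of Lemma~\ref{contains-pt}, but account for the fact that tangency of $C_f$ to the line $L$ at $P$ is a two-part requirement: first that $P \in C_f$, and second that the tangent line $T_PC_f = T_PS_f \cap T_P\Ovd$ actually contains $L$. The first part is exactly the single linear condition $f(P)=0$ from Lemma~\ref{contains-pt}. For the second part, I would use Definition~\ref{tangline}: since $L \subset T_P\Ovd$ already, the condition $L \subset T_PC_f$ is equivalent (assuming $C_f$ is smooth at $P$, and the singular case only makes tangency easier) to $L \subset T_PS_f$, i.e. the hyperplane $T_PS_f$ defined by $\nabla f(P)$ contains the line $L$.

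Concretely, I would pick two distinct points $P, Q \in \overline{\Fq}^4$ spanning the line $L$ (so $P$ is the given point and $Q$ is any second point on $L$). The containment $L \subset T_PS_f$ holds if and only if both $P$ and $Q$ lie on the hyperplane $\sum_j \frac{\partial f}{\partial x_j}(P)\, x_j = 0$. The condition at $P$ reads $\sum_j \frac{\partial f}{\partial x_j}(P) P_j = 0$, which by Euler's identity for the homogeneous polynomial $f$ of degree $d$ equals $d \cdot f(P)$; so as long as $d$ is coprime to the characteristic this is implied by $f(P)=0$ and imposes nothing new, and even if $d \equiv 0$, it is at worst one more linear condition that we will absorb into the count. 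The genuinely new condition is the one at $Q$: $\sum_j \frac{\partial f}{\partial x_j}(P)\, Q_j = 0$. Writing $f = a_1 f_1 + \cdots + a_N f_N$, each $\frac{\partial f}{\partial x_j}(P)$ is a fixed $\Fq$-linear combination of $a_1,\dots,a_N$ (the coefficients being $\frac{\partial f_i}{\partial x_j}(P)$, which are constants), and so $\sum_j \frac{\partial f}{\partial x_j}(P) Q_j$ is again a single $\Fq$-linear form in $a_1,\dots,a_N$. Hence the full tangency requirement is cut out by the vanishing of the linear form $f(P)$ together with the vanishing of this second linear form, so it has codimension at most $2$ in $\mathcal{F}(d)$.

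The main point to be careful about — the only real obstacle — is the smoothness caveat. The clean statement ``$L \subset T_PC_f \iff L \subset T_PS_f$'' uses $T_PC_f = T_PS_f \cap T_P\Ovd$ together with $L \subset T_P\Ovd$; this identity holds verbatim by Definition~\ref{tangline} regardless of whether $C_f$ is smooth at $P$. If $C_f$ is singular at $P$ (either $S_f$ singular at $P$, or $T_P\Ovd \subset T_PS_f$), then $T_PC_f = T_P\Ovd \supseteq L$ automatically, so $C_f$ is tangent to $L$ at $P$ and the tangency conditions are vacuously satisfied on that locus; a fortiori the space of $f$ satisfying the tangency condition still contains the codimension-$\le 2$ subspace cut out by the two linear forms above, and we only ever need the upper bound on codimension, so this causes no trouble. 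I would also remark that the codimension can genuinely be $1$ rather than $2$ in degenerate situations (e.g.\ if the second linear form happens to be a multiple of $f(P)$, or is identically zero, for the given $P$, $L$), which is exactly why the lemma only claims ``at most $2$''. Assembling these observations gives the result.
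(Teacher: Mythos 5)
Your proposal is correct and follows essentially the same route as the paper: one linear condition from $f(P)=0$, one more from evaluating $\sum_j \frac{\partial f}{\partial x_j}(P)\,x_j$ at a second point of $L$ (the paper phrases this with the direction vector $v=Q-P$ rather than $Q$ itself, making your Euler-identity digression unnecessary), and the observation that these two conditions need not be independent, giving codimension at most $2$.
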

\begin{proof}
    A curve $C_f$ is tangent to $L$ at $P$, as a curve in $\Pp^3$, if $P\in C_f$ and $L$ is tangent to $S_f$. The condition $P\in C_f$ just says $f(P) = 0$, and by Lemma~\ref{contains-pt} this imposes 1 linear relation on the coefficients $a_1,\ldots,a_N$.

    If $P=(p_0:p_1:\cdots:p_n)$, and $Q=(p_0+v_0:p_1+v:\cdots:p_n+v_n)$ is another $\Fq$-point on $L$, then the condition $L\subset T_PS$ is encoded by the equation
    \[
        v_0\frac{\partial f}{\partial x_0}(P) + \cdots + v_n\frac{\partial f}{\partial x_n}(P) = 0.
    \]
    Each $\frac{\partial f}{\partial x_i}(P)$ is a linear combination of the coefficients $a_1,\ldots,a_N$, so this tangency condition imposes one more relation on the coefficients.

    These two linear constrainst may not be linearly independent if $d$ is too small, but the polynomials satisfying both of them lie in a subspace of dimension at least $\dim(\mathcal{F}(d))-2$, i.e. codimension at most 2.
\end{proof}

Note that $C_f$ is tangent to the line $L$ if and only if $C_f$ is tangent to any (and thus all) of the $q$ circles of $\Ovd$ that are themselves tangent to $L$.

\begin{lem}\label{sing-cond}
    Fix $P\in\Ovd(\Fq)$. The space of polynomials $f\in\mathcal{F}(d)$ defining curves $C_f$ singular at $P$ has codimension at most 3 in $\mathcal{F}(d)$.
\end{lem}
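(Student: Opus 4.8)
The plan is to mimic the proof of Lemma~\ref{tang-cond}, but now encoding the stronger requirement that $C_f$ be \emph{singular} at $P$ as a curve in $\Ovd$ rather than merely tangent to one particular line. Recall that $C_f$ is singular at $P$ precisely when either $S_f$ is singular at $P$, or $S_f$ is smooth at $P$ but $T_P S_f = T_P\Ovd$; in either case the tangent line $T_PC_f = T_PS_f\cap T_P\Ovd$ degenerates to all of $T_P\Ovd$, so $C_f$ becomes tangent to \emph{every} line through $P$ inside $T_P\Ovd$. So first I would observe that it suffices to force $T_PS_f$ to contain the whole plane $T_P\Ovd$: this is exactly the condition that $C_f$ is tangent at $P$ to two distinct lines $L_1,L_2\subset T_P\Ovd$ through $P$, equivalently tangent to all of them.

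Next I would translate this into linear conditions on the coefficients $a_1,\ldots,a_N$ of $f = a_1 f_1 + \cdots + a_N f_N$, following the computation in Lemma~\ref{tang-cond}. The condition $P\in C_f$ is the single equation $f(P)=0$, which by Lemma~\ref{contains-pt} is one linear relation. Then, picking two $\Fq$-points $Q_1 = P + v^{(1)}$ and $Q_2 = P + v^{(2)}$ on $T_P\Ovd$ whose difference vectors $v^{(1)}, v^{(2)}$ span the plane $T_P\Ovd$ modulo $P$, the requirement that both directions lie in $T_PS_f$ gives the two equations $\sum_i v^{(1)}_i \frac{\partial f}{\partial x_i}(P) = 0$ and $\sum_i v^{(2)}_i \frac{\partial f}{\partial x_i}(P) = 0$, each of which is a linear relation in the $a_j$ since every partial derivative $\frac{\partial f}{\partial x_i}(P)$ is a fixed linear combination of the coefficients. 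Together with $f(P)=0$ these are three linear equations, so the solution space has codimension at most $3$ in $\mathcal{F}(d)$.

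The one thing to verify carefully is that imposing the two gradient conditions relative to $v^{(1)}$ and $v^{(2)}$ genuinely captures ``$T_P\Ovd \subset T_PS_f$'' (and hence singularity of $C_f$ at $P$), including the case where $S_f$ itself is singular at $P$. This is fine: if $\nabla f(P) = 0$ then all the derivative equations hold trivially and $T_PS_f$ is defined to be all of $\Pp^3$, which certainly contains $T_P\Ovd$; and if $\nabla f(P)\neq 0$ then $T_PS_f$ is a genuine hyperplane, and a hyperplane containing $P$ and two independent directions $v^{(1)},v^{(2)}$ spanning $T_P\Ovd/P$ must contain the plane $T_P\Ovd$, so $T_PC_f = T_PS_f\cap T_P\Ovd = T_P\Ovd$ and $C_f$ is singular at $P$ by Definition~\ref{tangline}. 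Conversely any $f$ with $C_f$ singular at $P$ satisfies all three equations, so the described subspace is exactly the set of such $f$.

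I do not expect a real obstacle here; the proof is a routine elaboration of Lemma~\ref{tang-cond}. The only mild subtlety worth a sentence in the writeup is the phrase ``at most $3$'': the three linear constraints need not be independent when $d$ is small, so one should only claim codimension $\le 3$, exactly as the earlier lemmas phrase their bounds.
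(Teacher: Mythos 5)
Your proposal is correct and follows essentially the same route as the paper: one linear condition from $f(P)=0$ plus two more from requiring $\nabla f(P)$ to annihilate two independent directions spanning $T_P\Ovd$, giving codimension at most $3$. The extra verification that this captures both the case $\nabla f(P)=0$ and the case $T_PS_f=T_P\Ovd$ is a welcome (if routine) elaboration of what the paper leaves implicit.
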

\begin{proof}
    The curve $C_f$ is singular at $P\in\Ovd(\Fq)$ if $T_P\Ovd$ is tangent to the surface $S_f$ at $P$. This is captured by saying $P\in S_f$ and $\nabla f(P)$ is a scalar multiple of $\nabla\varphi(P)$ for the defining polynomial $\varphi$ of $\Ovd$. Just as above, Lemma~\ref{contains-pt} provides one linear relation among the coefficients $a_1,\ldots,a_N$ that guarantees $P\in S_f$.

    Picking 2 independent directions in $T_P\Ovd$, we can say that $C_f$ is singular at $P$ if $P\in S$ and $\nabla f(P)$ is perpendicular to both selected directions in $T_P\Ovd$. This induces 2 more linear constraints on the coefficients $a_1,\ldots,a_N$, for a total of 3 constraints which may or may not be linearly independent. Thus these polynomials lie in a subspace of codimension at most 3.
\end{proof}

Of course, the situation where $C_f$ is tangent to a line $L$ at a point $P$, as described in Lemma~\ref{tang-cond}, includes the possibility that $C_f$ is singular at $P$ as described in Lemma~\ref{sing-cond}.

\begin{rmk}\label{deg-r}
    If any of the points $P\in\Ovd$ above are non-$\Fq$-points, say the coordinates of $P$ are all in the degree $r$ extension $\mathbb{F}_{q^r}$. This extension can be seen as an $r$-dimensional vector space over $\Fq$, so each linear condition described above becomes $r$ linear conditions at a degree $r$ point.
\end{rmk}

\begin{prop}\label{codims}
    Fix distinct points $P_1,\ldots,P_a,Q_1,\ldots,Q_b$ in $\Pp^3$, and a line $L_i$ tangent to $\Ovd$ at $P_i$ for each $i=1,\ldots,a$. Suppose each $P_i$ has degree $r_i$ and each $Q_i$ has degree $\rho_i$. The set of polynomials $f\in\mathcal{F}(d)$ defining curves $C_f$ that are tangent to each $L_i$ at $P_i$ and singular at each $Q_i$ has codimension at most
    \[
        \sum_{i=1}^a2r_i + \sum_{i=1}^b3\rho_i
    \]
    in $\mathcal{F}(d)$.
\end{prop}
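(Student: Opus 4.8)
The plan is to express the set in question as an intersection of linear subspaces of $\mathcal{F}(d)$ --- one subspace for each listed point --- and then bound its codimension by the sum of the individual codimensions.

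First I would analyze a single tangency point $P_i$. When $r_i = 1$, Lemma~\ref{tang-cond} already says that the $f \in \mathcal{F}(d)$ with $C_f$ tangent to $L_i$ at $P_i$ form a subspace of codimension at most $2 = 2r_i$. When $r_i > 1$, the coordinates of $P_i$ and a direction vector of $L_i$ lie in $\mathbb{F}_{q^{r_i}}$, so the two defining equations from the proof of Lemma~\ref{tang-cond} --- namely $f(P_i) = 0$ and the vanishing of the directional derivative of $f$ along $L_i$ at $P_i$ --- take values in $\mathbb{F}_{q^{r_i}}$. Expanding each such equation against an $\Fq$-basis of $\mathbb{F}_{q^{r_i}}$, exactly as in Remark~\ref{deg-r}, replaces it by $r_i$ honest $\Fq$-linear conditions on the coefficient vector $(a_1,\dots,a_N)$ (honest because the $a_k$ are $\Fq$-scalars while $f_k(P_i)$ and $\partial f_k/\partial x_\ell(P_i)$ are fixed elements of $\mathbb{F}_{q^{r_i}}$). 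Hence the $f$ tangent to $L_i$ at $P_i$ cut out a subspace $W_i \subseteq \mathcal{F}(d)$ of codimension at most $2r_i$. Running the same argument on Lemma~\ref{sing-cond} gives, for each $j$, a subspace $V_j \subseteq \mathcal{F}(d)$ of codimension at most $3\rho_j$ consisting of the $f$ with $C_f$ singular at $Q_j$.

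Next I would intersect: the polynomials described in the proposition are exactly $W_1 \cap \dots \cap W_a \cap V_1 \cap \dots \cap V_b$, since each tangency (resp.\ singularity) requirement is literally the conjunction of the conditions listed in Lemma~\ref{tang-cond} (resp.\ Lemma~\ref{sing-cond}). Using that $\operatorname{codim}$ is subadditive over intersections of subspaces of a finite-dimensional vector space, the codimension of this intersection is at most $\sum_{i=1}^{a} 2r_i + \sum_{j=1}^{b} 3\rho_j$, which is the claim.

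I do not anticipate a genuine obstacle; the proposition is a consolidation of Lemmas~\ref{contains-pt}, \ref{tang-cond}, and \ref{sing-cond} together with Remark~\ref{deg-r}. The only point needing a moment's care is the reduction at a non-$\Fq$ point --- checking that expanding an $\mathbb{F}_{q^{r}}$-valued equation coordinatewise yields $\Fq$-linear functionals of the coefficients, and that it simultaneously (for free) enforces the Galois-conjugate conditions at the conjugates of the point, so these conjugates need not be listed separately. It is also worth noting that distinctness of the $P_i$ and $Q_j$ is not actually used for this upper bound: any coincidences or conjugacies among the listed points only shrink the true codimension.
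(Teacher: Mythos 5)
Your proposal is correct and follows essentially the same route as the paper: invoke Lemmas~\ref{tang-cond} and \ref{sing-cond} together with Remark~\ref{deg-r} to bound the codimension of each individual condition, then intersect the resulting subspaces and use subadditivity of codimension. The extra care you take at non-$\Fq$ points (expanding $\mathbb{F}_{q^{r}}$-valued equations over an $\Fq$-basis) is a welcome elaboration of what the paper leaves implicit in Remark~\ref{deg-r}.
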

\begin{proof}
    From Lemma~\ref{tang-cond} and Remark~\ref{deg-r}, the condition that a curve $C_f$ is tangent to $L_i$ at $P_i$ determines a subspace of codimension at most $2r_i$ in $\mathcal{F}(d)$. And from Lemma~\ref{sing-cond} and Remark~\ref{deg-r}, the condition that a curve $C_f$ is singular at $Q_i$ determines a subspace of  codimension at most $3\rho_i$.

    Combining all the tangency and singularity conditions amounts to intersecting all the subspaces described above, and the codimension of an intersection is at most the sum of the individual codimensions. All the subspaces are linear, since the zero polynomial satisfies all the homogeneous constraints listed, so the intersection is non-empty.
\end{proof}

In particular, if $\sum 2r_i + \sum 3\rho_i < N$, there are definitely non-zero polynomials defining curves with all the listed conditions. Of course, for any polynomials $f$ and $g$ such that $g-f$ is divisible by the quadratic polynomial $\varphi$ defining $\Ovd$, the polynomials $f$ and $g$ define the same inversive curve. To eliminate this over-counting, we consider the space of degree $d$ polynomials that are multiples of the quadratic $\varphi$. These are precisely the surfaces that contain $\Ovd$.

\begin{lem}\label{ovoidspace}
    The dimension of degree $d$ polynomials which define surfaces containing $\Ovd$ is $\binom{d+1}{3}$. 
\end{lem}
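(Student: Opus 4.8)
The statement to prove is that the space of degree-$d$ polynomials in $\F_q[x,y,z,w]$ that define surfaces containing $\Ovd$ has dimension $\binom{d+1}{3}$. A surface $S_g$ contains $\Ovd$ precisely when $g$ vanishes on every point of $\Ovd$; since $\varphi$ is irreducible (being the defining equation of a smooth irreducible quadric), this happens if and only if $\varphi \mid g$. So I want to compute the dimension of the ideal component $\varphi \cdot \F_q[x,y,z,w]_{d-2}$, i.e.\ the image of the multiplication-by-$\varphi$ map $\mathcal{F}(d-2) \to \mathcal{F}(d)$.

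The plan is to show this multiplication map is injective. Because $\F_q[x,y,z,w]$ is an integral domain and $\varphi \neq 0$, the map $h \mapsto \varphi h$ has trivial kernel: if $\varphi h = 0$ then $h = 0$. Hence the image has dimension equal to $\dim \mathcal{F}(d-2)$. By Lemma~\ref{dims}, $\dim \mathcal{F}(d-2) = \binom{(d-2)+3}{3} = \binom{d+1}{3}$, which is exactly the claimed number. (One should note the edge cases $d < 2$, where $\mathcal{F}(d-2)$ is interpreted as the zero space and $\binom{d+1}{3} = 0$ — indeed a polynomial of degree less than $2$ cannot be a multiple of the quadratic $\varphi$, so the only such polynomial defining a surface containing $\Ovd$ is... well, none, consistent with dimension $0$.)

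The one genuine point requiring care is the claim that $S_g \supseteq \Ovd$ forces $\varphi \mid g$, rather than merely $g$ vanishing on the $\overline{\F_q}$-points of $\Ovd$ lying on some subvariety. This follows because $\varphi$ is irreducible over $\F_q$ (indeed over $\overline{\F_q}$, since the parametrization by $(s,t) \in \overline{\F_q}^2$ together with the point $(1:0:0:0)$ exhibits $\Ovd$ as a rational — hence irreducible — surface), so the radical ideal of the hypersurface $\Ovd$ is the principal ideal $(\varphi)$; any $g$ vanishing on all of $\Ovd$ lies in this radical ideal, and since $\varphi$ is squarefree the radical is $(\varphi)$ itself, giving $\varphi \mid g$. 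This is standard Nullstellensatz-type reasoning.

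I expect no real obstacle here — this is a short lemma. If one wanted to avoid invoking irreducibility of $\varphi$ and the Nullstellensatz, an alternative is to argue more elementarily: reduce $g$ modulo $\varphi$ using $\varphi$ to eliminate (say) the monomial $xw$, writing $g = \varphi q + r$ where $r$ has no factor of $xw$ in a chosen monomial order; then the parametrization $(s^2+st+\gamma t^2 : s : t : 1)$ forces $r \equiv 0$ as a polynomial identity in $s,t$ (a nonzero $r$ of bounded degree cannot vanish on all of $\overline{\F_q}^2$), hence $r = 0$ and $\varphi \mid g$. Either way, the dimension count via injectivity of multiplication by $\varphi$ and Lemma~\ref{dims} completes the proof.
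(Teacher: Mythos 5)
Your proposal is correct and follows essentially the same route as the paper: a surface containing $\Ovd$ is defined by a multiple of $\varphi$, and the space of such multiples of degree $d$ is identified with $\mathcal{F}(d-2)$, of dimension $\binom{d+1}{3}$. The only difference is that you carefully justify the two steps the paper takes for granted --- that containment forces $\varphi \mid g$ (via irreducibility of $\varphi$) and that multiplication by $\varphi$ is injective --- which is a welcome but not essential addition.
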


\begin{proof}
    If a hypersurface $S_f$ contains $\Ovd = S_{\varphi}$, then $f$ may be factored as $f = \varphi \cdot g$. The factor $g$ must have degree $d - 2$ as $\varphi$ has degree 2, and the space of polynomials with degree $d-2$ is given by $\binom{d + 1}{3}$. Thus the space of polynomials of degree $d$ defining surfaces containing $\Ovd$ is $\binom{d + 1}{3}$. 
\end{proof}

\begin{rmk}\label{same for hyp}
    All of the results of this section, from Lemma~\ref{contains-pt} through Lemma~\ref{ovoidspace}, hold also in the space $\mathcal{H}(d)$ of $xw$-symmetric polynomials. In particular, recall $\Ovd$ is $xw$-symmetric to justify Lemma~\ref{ovoidspace}.
\end{rmk}

\subsection{Constructions of \ctf\ curves}\label{constrCTF}
Now we turn to lower bounds on degrees of hypersurfaces that guarantee the existence of \ctf\ curves. For arbitrary $q$ we use the notion of a \textbf{blocking set}, i.e. a set $\mathcal{B}\subset\Ovd(\Fq)$ such that every circle in $\Ovd$ contains at least one point of $\mathcal{B}$ in order to construct extremely singular curves. For even $q$ we can apply ideas about partial spreads in $\Pp^3$ to construct \ctf\ curves without enforcing singularities.

A curve $C_f$ with a singularity at every point of a blocking set must be \ctf, since every circle of $\Ovd$ passes through a singularity of $C_f$ and is thus tangent to $C_f$. To be as efficient as possible, though, we prefer to use small blocking sets. From \cite{blocking}, we have the following.

\begin{lem}[\cite{blocking}, Theorem 2.2]\label{blockingset}
    There exists a blocking set $\mathcal{B}\subset\mathcal{O}(\Fq)$ with size
    \[
        |\mathcal{B}|\leq q^2+1-\sqrt[q+1]{\binom{q^2}{q}(q-1)!}.
    \]
\end{lem}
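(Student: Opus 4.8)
The statement is a purely combinatorial fact about the incidence structure of $\Ovd(\Fq)$, which by Section~\ref{sec:inv} has $q^2+1$ points, $q(q^2+1)$ circles, and exactly $q+1$ points on each circle. I would prove it by an averaging (double-counting) argument, since that produces exactly the stated bound; the probabilistic alteration method (include each point of $\Ovd(\Fq)$ in $\mathcal{B}$ independently with probability $p$, then adjoin one point from each still-unblocked circle) would also give a bound of the same shape after optimising $p$, but with slightly worse constants.

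\emph{Step 1: an existence criterion.} First I would show that if $b$ is a positive integer with $\binom{q^2+1}{b} > q(q^2+1)\binom{q^2-q}{b}$, then $\Ovd(\Fq)$ admits a blocking set of size $b$. Suppose otherwise, so that every $b$-element subset $\mathcal{B}\subseteq\Ovd(\Fq)$ is disjoint from some circle. Count pairs $(\mathcal{B},C)$ with $\mathcal{B}$ a $b$-element subset, $C$ a circle, and $\mathcal{B}\cap C=\emptyset$. Summing over $\mathcal{B}$ gives at least $\binom{q^2+1}{b}$ such pairs; summing over the $q(q^2+1)$ circles, each of which omits exactly $(q^2+1)-(q+1)=q^2-q$ points and hence is disjoint from exactly $\binom{q^2-q}{b}$ of the $b$-element subsets, gives exactly $q(q^2+1)\binom{q^2-q}{b}$ pairs. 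The two counts contradict the hypothesis.

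\emph{Step 2: simplifying the criterion.} Cancelling the common factorials, $\binom{q^2+1}{b}\big/\binom{q^2-q}{b}$ is a quotient of two products of $q+1$ consecutive integers, and the criterion of Step 1 rearranges (writing $t = q^2+1-b$ for the size of the complement of $\mathcal{B}$) to $t(t-1)(t-2)\cdots(t-q) < q(q^2-1)(q^2-2)\cdots(q^2-q+1)$. I would then record the elementary identity $q(q^2-1)(q^2-2)\cdots(q^2-q+1) = (q^2)!/\big(q\,(q^2-q)!\big) = \binom{q^2}{q}(q-1)!$, so the criterion reads $t(t-1)\cdots(t-q) < \binom{q^2}{q}(q-1)!$. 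Now take $t = \big\lceil \sqrt[q+1]{\binom{q^2}{q}(q-1)!}\,\big\rceil$, so that $b = q^2+1-t \le q^2+1-\sqrt[q+1]{\binom{q^2}{q}(q-1)!}$ and $1\le b\le q^2$; if $t\le q$ the left side of the criterion is $\le 0$, and otherwise $t(t-1)\cdots(t-q)$ is a product of $q+1\ge 3$ distinct positive integers of arithmetic mean $t-\tfrac q2$, hence strictly less than $(t-\tfrac q2)^{q+1}$ by AM--GM, and since $t-1\le\sqrt[q+1]{\binom{q^2}{q}(q-1)!}$ and $q\ge 2$ we get $t-\tfrac q2\le\sqrt[q+1]{\binom{q^2}{q}(q-1)!}$, yielding the required strict inequality. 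The handful of small $q$ can be checked directly.

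The main obstacle is bookkeeping rather than conceptual: making the binomial ratio in Step 2 collapse cleanly to $\binom{q^2}{q}(q-1)!$, and handling the ceiling together with the strict-versus-nonstrict inequalities carefully enough that the final bound comes out as $q^2+1-\sqrt[q+1]{\binom{q^2}{q}(q-1)!}$ exactly, with no off-by-one loss. Everything else is a short double count.
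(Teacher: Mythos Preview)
The paper does not prove this lemma; it is quoted directly from \cite{blocking}. Your argument is correct and self-contained, so there is nothing to fault.

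It is worth noting, however, that your method differs from the one the paper adopts for the parallel hyperbolic statement (Lemma~\ref{hyp-block-stat} and Theorem~\ref{hypblock}), which presumably mirrors the argument in \cite{blocking}. There the authors build the blocking set greedily: at each stage an averaging argument locates a new point lying on many as-yet-unblocked circles, and an induction tracks the resulting lower bound on $\beta(X)$. You instead run a single global double count (equivalently, a union bound over circles) to show that some $b$-subset must be blocking once $\binom{q^2+1}{b} > q(q^2+1)\binom{q^2-q}{b}$. After the same algebraic simplification both routes reduce to the identical criterion $t(t-1)\cdots(t-q) < \binom{q^2}{q}(q-1)!$ with $t=q^2+1-b$, so neither is sharper; your version is shorter and avoids the induction, while the greedy version has the mild advantage of actually exhibiting a construction and of giving information about $\beta(X)$ for every intermediate $n$, not just the threshold value.
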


Now we can construct (very singular) \ctf\ curves.

\begin{thm}\label{invsingdeg}
    For $d > -1 + \sqrt{3\left(q^2+1-\sqrt[q+1]{\binom{q^2}{q}(q-1)!}\right)}$, there exists a polynomial $f$ of degree $d$ such that $C_f$ is \ctf.
\end{thm}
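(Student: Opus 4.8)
The plan is to combine the blocking set of Lemma~\ref{blockingset} with the codimension count of Proposition~\ref{codims} and the over-counting correction of Lemma~\ref{ovoidspace}. First I would take a blocking set $\mathcal{B}\subset\Ovd(\Fq)$ with $|\mathcal{B}|\leq q^2+1-\sqrt[q+1]{\binom{q^2}{q}(q-1)!}=:\beta$. Every point of $\mathcal{B}$ is an $\Fq$-point, hence of degree $1$, so by Proposition~\ref{codims} (with no tangency conditions, only singularity conditions $Q_i$ each of degree $\rho_i=1$), the set of polynomials $f\in\mathcal{F}(d)$ defining curves $C_f$ singular at every point of $\mathcal{B}$ is a linear subspace of $\mathcal{F}(d)$ of codimension at most $3|\mathcal{B}|\leq 3\beta$.

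Next I would check that this subspace genuinely contains a polynomial that does not come from a multiple of $\varphi$, i.e. that imposing the singularity conditions does not force $\varphi\mid f$. The quickest way is a dimension inequality: by Lemma~\ref{dims} the space $\mathcal{F}(d)$ has dimension $\binom{d+3}{3}$, so the space of $f$ singular at all of $\mathcal{B}$ has dimension at least $\binom{d+3}{3}-3\beta$, whereas by Lemma~\ref{ovoidspace} the polynomials defining surfaces containing $\Ovd$ form a subspace of dimension only $\binom{d+1}{3}$. So as long as
\[
    \binom{d+3}{3}-3\beta > \binom{d+1}{3},
\]
there is a polynomial $f$ of degree $d$, with $\varphi\nmid f$, defining an honest inversive curve $C_f$ that is singular at every point of $\mathcal{B}$. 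Since $\mathcal{B}$ is a blocking set, every $\Fq$-circle of $\Ovd$ passes through some singular point of $C_f$, hence is tangent to $C_f$ there (by Definition~\ref{tangline}, the tangent line at a singular point is all of $T_P\Ovd$, which contains the circle's tangent line), so $C_f$ is \ctf.

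It then remains to translate the inequality $\binom{d+3}{3}-\binom{d+1}{3}>3\beta$ into the stated bound on $d$. A direct computation gives $\binom{d+3}{3}-\binom{d+1}{3} = \frac{(d+3)(d+2)(d+1)-(d+1)d(d-1)}{6} = (d+1)^2$ (the two cubic terms cancel, leaving $(d+1)\cdot\frac{(d+3)(d+2)-d(d-1)}{6} = (d+1)\cdot\frac{6d+6}{6}=(d+1)^2$). So the condition becomes $(d+1)^2 > 3\beta = 3\left(q^2+1-\sqrt[q+1]{\binom{q^2}{q}(q-1)!}\right)$, i.e. exactly $d > -1+\sqrt{3\left(q^2+1-\sqrt[q+1]{\binom{q^2}{q}(q-1)!}\right)}$, as claimed.

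The main obstacle, and the one deserving the most care, is the step that the singularity conditions do not collapse onto the subspace of multiples of $\varphi$: one must argue that the ``at most $3\beta$'' codimension bound, combined with the dimension count from Lemma~\ref{ovoidspace}, really leaves room outside $\varphi\cdot\mathcal{F}(d-2)$. The dimension inequality above handles this cleanly because $\binom{d+1}{3}+3\beta<\binom{d+3}{3}$ is precisely the hypothesis; no finer analysis of which constraints are independent is needed. A secondary point to state carefully is that a curve singular at a point of a blocking set is tangent there to any circle through that point — this is immediate from the convention $T_PC_f=T_P\Ovd$ at a non-smooth point together with the fact that every circle's tangent line lies in $T_P\Ovd$, so every such circle is transverse-free-blocked.
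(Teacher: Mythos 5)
Your proposal is correct and follows essentially the same route as the paper: impose a singularity at each point of the blocking set from Lemma~\ref{blockingset}, bound the codimension by $3|\mathcal{B}|$ via Proposition~\ref{codims}, compare against the $\binom{d+1}{3}$-dimensional space of multiples of $\varphi$ from Lemma~\ref{ovoidspace}, and use $\binom{d+3}{3}-\binom{d+1}{3}=(d+1)^2$ to extract the degree bound. Your added justification that a circle through a singular point is automatically tangent there is a detail the paper states earlier in Section~\ref{constrCTF} rather than inside the proof, but it is the same argument.
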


\begin{proof}
    Fix a blocking set $\mathcal{B}\subset\Ovd(\Fq)$. We consider the space of homogeneous polynomials of degree $d$ and the subspace of those that have a singularity at each point $P\in \mathcal{B}$. From Proposition~\ref{codims}, the space of solutions to this system thus has dimension at least
    \[
        \binom{d+3}{3} - 3|\mathcal{B}|.
    \]

    Of course we still want to avoid polynomials that are multiples of the quadratic $\varphi$ defining $\Ovd$. From Lemma ~\ref{ovoidspace} the dimension of the space of these polynomials is $\binom{d+1}{3}$. Not all such polynomials necessarily define surfaces tangent to $\Ovd$ at every $P\in B$, but this does not cause a difficulty. A surface $S=S_f$ of degree $d$ that does not contain $\Ovd$ but where $C_f$ is singular at every $P\in B$ is guaranteed to exist if
    \[
        \binom{d+3}{3} - 3|\mathcal{B}| > \binom{d+1}{3}.
    \]
    Since $\binom{d+3}{3}-\binom{d+1}{3} = (d+1)^2$, this amounts to saying
    \[
        d > -1 + \sqrt{3|\mathcal{B}|}.
    \]

    If $\mathcal{B}$ is from Lemma~\ref{blockingset} with size $|\mathcal{B}|\leq q^2+1-\sqrt[q+1]{\binom{q^2}{q}(q-1)!}$, then the degree bound above is satisfied by hypothesis. Thus a degree $d$ polynomial that defines a \ctf\ curve certainly exists.
\end{proof}

\begin{rmk}\label{inv-simp}
    Numerical data and limits computed in Mathematica suggest that
    \[
        q^2+1-\sqrt[q+1]{\binom{q^2}{q}(q-1)!} \leq 3q\ln(q).
    \]
    If this inequality holds then we may simplify (if weaken) the statement of Lemma~\ref{blockingset} to say there is a blocking set of size at most $3q\ln(q)$, and the statement of Theorem~\ref{invsingdeg} to say degree $d$ polynomials exist defining \ctf\ curves for all $d>-1+3\sqrt{q\ln(q)}$.
\end{rmk}

Another approach to constructing \ctf\ curves is to specify an $\Fq$-point on each circle at which a curve shall be tangent to that circle. Fortunately, once a curve is tangent to one circle at one $\Fq$-point, it is automatically tangent to $q-1$ more circles at the same point by Lemma~\ref{bundletan}. Towards this end, we will need to discuss \textbf{partial spreads}, collections $\mathcal{L}$ of lines in $\Pp^3$ that are pairwise disjoint.

\begin{lem}\label{spread-ctf}
    Consider a polynomial $f$ that defines a curve $C_f$ such that $\Ovd(\Fq)\subset C_f$. At each $P\in\Ovd(\Fq)$, choose a line $L_P \subset T_PC_f$ (there is only one choice if $C_f$ is smooth). If the collection $\mathcal{L} = \{L_P\mid P\in\Ovd(\Fq)\}$ is a partial spread of $\Pp^3$ then $C_f$ is \ctf.
\end{lem}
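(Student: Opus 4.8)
The plan is to reduce \ctf-ness of $C_f$ to a single combinatorial fact: a spread of $\Pp^3$ meets every plane in exactly one of its lines.

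First I would pin down the right tangency criterion. The curve $C_f$ is tangent to a circle $C$ at a point $P\in C(\Fq)$ precisely when $T_PC\subseteq T_PC_f$, and since $L_P\subseteq T_PC_f$ by construction it suffices to find, for each circle $C$, one point $P\in C(\Fq)$ with $T_PC=L_P$; note this works uniformly whether or not $C_f$ is smooth at $P$, which is exactly why one wants the containment formulation rather than an equality of tangent lines. Writing $\pi_C=S_\ell$ for the plane cutting out $C$, a hyperplane is its own tangent space, so $T_PC=T_PS_\ell\cap T_P\Ovd=\pi_C\cap T_P\Ovd$; since $L_P\subseteq T_PC_f\subseteq T_P\Ovd$ is a line, the condition $T_PC=L_P$ is the same as $L_P\subseteq\pi_C$. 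Thus the goal becomes: every circle $C$ has a point $P\in C(\Fq)$ whose chosen line $L_P$ lies in the plane $\pi_C$.

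Next I would upgrade the partial spread $\mathcal L$ to a full spread. Each $L_P$ lies in $T_P\Ovd$ and passes through $P$, so, using that the tangent plane of the ovoid $\Ovd$ meets $\Ovd$ only at $P$, we get $L_P\cap\Ovd=\{P\}$; in particular distinct points of $\Ovd(\Fq)$ yield distinct lines, so $|\mathcal L|=|\Ovd(\Fq)|=q^2+1$. A partial spread of $\Pp^3(\Fq)$ has at most $q^2+1$ lines, and one of exactly that size is a spread (its lines partition the $q^3+q^2+q+1$ points), so $\mathcal L$ is a spread. The key lemma is then that a spread of $\Pp^3(\Fq)$ meets every plane $\pi$ in exactly one of its lines: each point of $\pi$ lies on a unique spread line, at most one spread line can lie inside $\pi$ (two coplanar lines meet), and the remaining spread lines meet $\pi$ in one point each, all distinct, so if $t$ is the number of spread lines inside $\pi$ then $t(q+1)+(q^2+1-t)=|\pi(\Fq)|=q^2+q+1$, forcing $t=1$.

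Applying this lemma with $\pi=\pi_C$ produces a line $L_0=L_{P_0}\in\mathcal L$ with $L_0\subseteq\pi_C$. Then $P_0\in L_0\subseteq\pi_C$ and $P_0\in\Ovd$ give $P_0\in\Ovd\cap\pi_C=C$, so $P_0\in C(\Fq)$, and $L_{P_0}\subseteq\pi_C\cap T_{P_0}\Ovd=T_{P_0}C$ forces $L_{P_0}=T_{P_0}C$; by the reduction of the first step, $C_f$ is tangent to $C$ at $P_0$. As $C$ was an arbitrary $\Fq$-circle, $C_f$ is \ctf. The crux — and the only place the hypothesis is really used — is recognizing that the cardinality count forces $\mathcal L$ to be a full spread and invoking the elementary fact that full spreads of $\Pp^3$ hit every plane exactly once; the rest is just unwinding the definitions of the relevant tangent spaces.
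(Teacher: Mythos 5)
Your proof is correct, but it proceeds by a genuinely different route than the paper's. The paper's argument is a double count: by Lemma~\ref{bundletan}, each point $P\in\Ovd(\Fq)$ contributes the $q$ circles cut by the planes through $L_P$ other than $T_P\Ovd$, the partial-spread hypothesis guarantees no circle is counted twice (a plane containing both $L_P$ and $L_Q$ would force them to meet), and $q(q^2+1)$ is exactly the total number of circles, so every circle is accounted for. You instead observe that the $q^2+1$ pairwise disjoint lines $L_P$ cover all $(q^2+1)(q+1)=q^3+q^2+q+1$ points of $\Pp^3(\Fq)$, hence form a full spread, and then invoke the standard fact that a spread meets every plane in exactly one of its lines; applying this to the plane $\pi_C$ of an arbitrary circle $C$ produces a point $P_0\in C(\Fq)$ with $T_{P_0}C=L_{P_0}\subseteq T_{P_0}C_f$. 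The two arguments lean on the same geometric input (disjoint lines cannot be coplanar) but package it differently: the paper needs the count $q(q^2+1)$ of circles in the inversive plane, while you need the maximality of the partial spread and the plane-covering property of spreads. Your version is slightly more structural --- it exhibits, for each circle, an explicit point of tangency rather than concluding by exhaustion of a count --- and it makes transparent exactly where the cardinality $|\Ovd(\Fq)|=q^2+1$ is used. Your preliminary reductions (the tangency criterion $T_PC\subseteq T_PC_f$, the identification $T_PC=\pi_C\cap T_P\Ovd$, and the fact that each $L_P$ meets $\Ovd$ only at $P$) are all consistent with the paper's definitions, so the argument goes through.
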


\begin{proof}
    At each $P\in\Ovd(\Fq)$, $C_f$ is tangent to $q$ circles by Lemma~\ref{bundletan}. If the lines of $\mathcal{L}$ form a partial spread of $\Pp^3$ then no plane contains more than one of these lines (else they would intersect). Thus no circle in $\Ovd$ tangent to $L_P$ is also to $L_Q$ for $Q\neq P$.
    Thus the $q^2+1$ points of $\Ovd(\Fq)$ contribute a total of $q(q^2+1)$ circles tangent to the lines of $\mathcal{L}$ and thus tangent to $C_f$. These are all the circles in $\Ovd$, so $C_f$ is \ctf.
\end{proof}

Thas' paper \cite{spread} on ovoidal translation planes gives us the following. 

\begin{lem}[\cite{spread}]\label{partspr}
    There exists a partial spread $\mathcal{L}$ of $\Pp^3$ consisting of $q^2+1$ lines such that each line in $\mathcal{L}$ is tangent to $\Ovd$ (necessarily at distinct $\Fq$-points) if and only if $q$ is even.
\end{lem}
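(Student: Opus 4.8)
The plan is to prove the two implications separately. The easier direction is ``if $q$ is even, such an $\mathcal{L}$ exists'', which I would establish by exhibiting one explicitly using the symplectic structure that $\Ovd$ carries in even characteristic; the converse, that no such $\mathcal{L}$ exists when $q$ is odd, is where the real difficulty lies, and I expect to have to invoke Thas's classification \cite{spread} for it. As a preliminary remark, note that a partial spread of $q^2+1$ tangent lines is automatically a genuine spread of $\Pp^3$, with its $q^2+1$ lines in bijection with $\Ovd(\Fq)$ via tangency: the point set $\Pp^3(\Fq)$ has $(q^2+1)(q+1)$ points and each line has $q+1$ of them, so $q^2+1$ pairwise disjoint lines partition $\Pp^3(\Fq)$; each such line meets $\Ovd$ in exactly one $\Fq$-point (a line meeting the ovoid in a single point lies in the tangent plane there), and two of them cannot share that point without meeting, so the $q^2+1$ tangency points are distinct and exhaust $\Ovd(\Fq)$. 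This justifies the parenthetical claim and shows that the existence of $\mathcal{L}$ is equivalent to the existence of a spread of $\Pp^3$ all of whose lines are tangent to $\Ovd$.

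Suppose $q$ is even. Let $B(u,v)=\varphi(u+v)-\varphi(u)-\varphi(v)$, so that $\varphi(au+bv)=a^2\varphi(u)+ab\,B(u,v)+b^2\varphi(v)$ and $\nabla\varphi(P)\cdot v = B(P,v)$; a short computation with the explicit $\varphi$ shows $B$ is a nondegenerate alternating form, whence $T_P\Ovd=P^\perp$ for every $P\in\Ovd$, with $\perp$ the induced symplectic polarity. I would then show the tangent lines of $\Ovd$ are exactly the totally isotropic lines of the symplectic space $W(3,q)$: a line $L=\langle u,v\rangle$ tangent at $P$ lies in $P^\perp$, forcing $B(u,v)=0$; conversely, if $B(u,v)=0$ then $\varphi|_L=a^2\varphi(u)+b^2\varphi(v)$ is a perfect square in characteristic $2$, hence has a unique zero unless it vanishes identically, and the latter would give $L\subseteq\Ovd$, impossible for an ovoid. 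Thus a spread of $\Pp^3$ by tangent lines is precisely a symplectic spread, and symplectic spreads of $W(3,q)$ are well known to exist (for instance a Desarguesian spread is symplectic). By the preliminary remark, this spread is the desired $\mathcal{L}$.

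The main obstacle is the reverse implication, that no such $\mathcal{L}$ exists when $q$ is odd. Here $\perp$ is instead the nondegenerate orthogonal polarity of the quadric $\Ovd$; one still has $T_P\Ovd=P^\perp$ for $P\in\Ovd$, and for $X\notin\Ovd$ the tangent lines through $X$ are exactly the $q+1$ lines $XP$ as $P$ runs over the conic $X^\perp\cap\Ovd$ --- but now $X\notin X^\perp$, so these $q+1$ tangent lines are not coplanar, reflecting the absence of a nucleus for a conic in odd characteristic. The idea would be to contradict this using the elementary fact that every plane of $\Pp^3$ contains exactly one line of any spread: in a hypothetical tangent-line spread each secant plane $\pi$ must contain a unique $L_{Q_\pi}\in\mathcal{L}$, which is then forced to be the tangent of the conic $\pi\cap\Ovd$ at $Q_\pi$, and tracing these forced tangents around the pencil of planes through a fixed tangent line should produce an inconsistency when $q$ is odd. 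Carrying this out rigorously is exactly the content of Thas's theorem on ovoidal translation planes, so in the final write-up I would cite \cite{spread} for this direction rather than reproduce the argument. The forward direction, by contrast, is routine once the symplectic picture is set up.
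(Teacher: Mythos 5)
Your proposal is correct, and it does strictly more than the paper, which offers no argument at all and attributes the entire equivalence to Thas \cite{spread}. Your preliminary counting remark is exactly right: $q^2+1$ pairwise disjoint lines cover $(q^2+1)(q+1)=|\Pp^3(\Fq)|$ points and hence form a full spread, each plane then contains exactly one spread line, and the $q^2+1$ distinct tangency points must exhaust $\Ovd(\Fq)$ --- this is precisely the content of the parenthetical in the statement. Your symplectic argument for even $q$ is also sound: the polarization $B(u,v)=\varphi(u+v)-\varphi(u)-\varphi(v)$ of $\varphi=xw-y^2-yz-\gamma z^2$ reduces in characteristic $2$ to the nondegenerate alternating form $x_1w_2+x_2w_1+y_1z_2+y_2z_1$, the identity $\nabla\varphi(P)\cdot v=B(P,v)$ gives $T_P\Ovd=P^\perp$, and the perfect-square computation correctly identifies the tangent lines of $\Ovd$ with the totally isotropic lines of $W(3,q)$, so any symplectic spread (e.g.\ the regular spread arising from the $\F_{q^2}$-structure on $\Fq^4$, transported to the polarity of $B$ since all symplectic forms on $\Fq^4$ are equivalent) serves as $\mathcal{L}$. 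What this buys is a self-contained proof of the only direction the paper actually uses later (Theorems \ref{linctf} and \ref{hypltf} both assume $q$ even); the cost is that the odd-$q$ impossibility still rests on \cite{spread}, and your pencil-of-planes sketch for it is a plausible outline rather than a proof, so you are right to keep that citation. If you write this up, also supply a reference (or the two-line trace-form construction) for the existence of symplectic spreads of $W(3,q)$, which you currently assert as well known.
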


When $q$ is even, the partial spreads guaranteed by Lemma~\ref{partspr} contain one line tangent at each $\Fq$-point of $\Ovd$, since there are $q^2+1$ points. We can construct \ctf\ curves by imposing the tangencies suggested by the partial spread.

\begin{thm}\label{linctf}
    For $d > -1 + \sqrt{2(q^2+1)}$ and $q$ even, there exists a polynomial $f$ of degree $d$ such that $C_f$ is \ctf.
\end{thm}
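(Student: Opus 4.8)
The plan is to mimic the proof of Theorem~\ref{invsingdeg}, but with tangency conditions in place of singularity conditions, exploiting the partial spread from Lemma~\ref{partspr}. First I would fix a partial spread $\mathcal{L}$ of $\Pp^3$ as in Lemma~\ref{partspr}, which exists since $q$ is even. This spread consists of $q^2+1$ lines, each tangent to $\Ovd$ at a distinct $\Fq$-point; since $\Ovd(\Fq)$ has exactly $q^2+1$ points, there is exactly one line $L_P\in\mathcal{L}$ through each $P\in\Ovd(\Fq)$, and $L_P\subset T_P\Ovd$.

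Next I would count dimensions. Consider the space $\mathcal{F}(d)$ of degree $d$ homogeneous polynomials in four variables, with $\dim\mathcal{F}(d) = \binom{d+3}{3}$ by Lemma~\ref{dims}. For each $P\in\Ovd(\Fq)$, imposing that $C_f$ be tangent to $L_P$ at $P$ cuts down the dimension by at most $2$ by Lemma~\ref{tang-cond} (the points here are all $\Fq$-points, so Remark~\ref{deg-r} does not inflate anything). Applying Proposition~\ref{codims} with $a = q^2+1$, all $r_i = 1$, and $b = 0$, the subspace of polynomials $f$ defining curves tangent to $L_P$ at every $P\in\Ovd(\Fq)$ has dimension at least $\binom{d+3}{3} - 2(q^2+1)$. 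To avoid polynomials that are multiples of $\varphi$ (which define surfaces containing $\Ovd$ and give the same curve up to this redundancy), I subtract off the $\binom{d+1}{3}$-dimensional space from Lemma~\ref{ovoidspace}; using $\binom{d+3}{3} - \binom{d+1}{3} = (d+1)^2$ as in the proof of Theorem~\ref{invsingdeg}, a polynomial $f$ with $S_f\not\supset\Ovd$ satisfying all the tangency conditions exists provided
\[
    (d+1)^2 > 2(q^2+1),
\]
i.e. $d > -1 + \sqrt{2(q^2+1)}$, which is exactly the hypothesis.

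Finally I would verify that such an $f$ actually yields a \ctf\ curve. Since $S_f$ does not contain $\Ovd$, the curve $C_f$ is a genuine (one-dimensional) curve, and it passes through every $\Fq$-point of $\Ovd$ and is tangent to $L_P$ at each such $P$ by construction — in particular $\Ovd(\Fq)\subset C_f$, so the hypothesis of Lemma~\ref{spread-ctf} is met, with the chosen lines being precisely $\mathcal{L}$. Since $\mathcal{L}$ is a partial spread, Lemma~\ref{spread-ctf} immediately gives that $C_f$ is \ctf.

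The main obstacle I anticipate is a subtle one about the line choices: Lemma~\ref{spread-ctf} wants $L_P\subset T_PC_f$, but when $C_f$ fails to be smooth at $P$ we have $T_PC_f = T_P\Ovd$ (Definition~\ref{tangline}), and $L_P\subset T_P\Ovd$ holds automatically since $L_P$ is tangent to $\Ovd$; when $C_f$ is smooth at $P$, the tangency condition we imposed forces $L_P = T_PC_f$. Either way the hypothesis of Lemma~\ref{spread-ctf} is satisfied, so this turns out not to be a real gap — but it is the point that needs the most care in writing the argument, along with noting that the constructed curves are not claimed to be smooth, consistent with the paper's stated caveat.
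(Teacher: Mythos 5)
Your proposal is correct and follows essentially the same route as the paper: fix the partial spread of Lemma~\ref{partspr}, impose the $q^2+1$ tangency conditions via Proposition~\ref{codims}, compare against the $\binom{d+1}{3}$-dimensional space of multiples of $\varphi$ from Lemma~\ref{ovoidspace}, and conclude via Lemma~\ref{spread-ctf}. Your closing remark about singular points (where $T_PC_f=T_P\Ovd\supset L_P$ automatically) is a point the paper glosses over, and you resolve it correctly.
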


\begin{proof}
    Let $\mathcal{L}$ be a partial spread of $\Pp^3$ guaranteed by Lemma~\ref{partspr}. For each $P\in\Ovd(\Fq)$, let $L_P\in\mathcal{L}$ be the line tangent to $\Ovd$ at $P$.
    
    If we construct a hypersurface $S=S_f$ with $f(P)=0$ for each $P\in\Ovd(\Fq)$ then we will have $\Ovd(\Fq)\subset C_f$. If, in addition, $S_f$ is tangent to $L_P$ at $P$ for each $P\in \Ovd(\Fq)$, then the collection of lines tangent to $C_f$ at the points $P\in\Ovd(\Fq)$ will be precisely the spread $\mathcal{L}$. Then by Lemma~\ref{spread-ctf} the curve $C_f$ will be \ctf.

    The result of Proposition~\ref{codims} is that our $q^2+1$ tangency conditions above amount to at most $2(q^2+1)$ linear conditions in the space of polynomials of degree $d$. In the space of polynomials of degree $d$, the dimension of those polynomials satisfying the $2(q^2+1)$ (not necessarily independent) conditions is at least
    \[
        \binom{d+3}{3} - 2(q^2+1).
    \]
    Of course, all polynomials with the quadratic $\varphi$ as a factor are counted within these solutions, but we do not wish to consider them. So a polynomial $f$ of degree $d$ does exist making $C_f$ \ctf\ without $\Ovd\subset S_f$ if
    \[
        \binom{d+3}{3}-2(q^2+1) > \binom{d+1}{3}.
    \]
    Since $\binom{d+3}{3}-\binom{d+1}{3} = (d+1)^2$, this amounts to saying
    \[
        (d+1)^2>2(q^2+1),
    \]
    which is equivalent to the degree assumption in the Theorem statement.
\end{proof}

Note that Theorem~\ref{linctf} guarantees \ctf\ curves of any degree $d > -1 + \sqrt{2(q^2+1)}.$ In particular, we can find \ctf\ curves of any degree
\[
    d>\sqrt{2}q.
\]
This degree bound is linear in the size of the field, analogous to the degree $q+2$ curves in $\Pp^2$ discussed in \cite{transverse-free}. Unfortunately, the curves we have constructed here are not guaranteed to be smooth.

We do not have sufficiently powerful tools here to construct \emph{smooth} \ctf\ curves, but a start may be the following. Instead of merely stipulating that $C_f$ is tangent to each $L_P$ at $P$, we could choose a non-zero value for $\nabla f(P)$ to enforce tangencies while avoiding singularities at these points. Such a procedure can be accomplished e.g. using Lemma 2.1 of \cite{poonen}. We do not go into the technical details here, but simply remark that the degree bound becomes quadratic in $q$, much weaker than the linear bounds derived above.

\section{Curves in the hyperbolic plane}\label{sec:hyp}

Using the algebra of $xw$-symmetric curves introduced in Section~\ref{hyperbolic algebra}, we can begin to search for \ltf\ hyperbolic curves in analogy with our investigations into \ctf\ inversive curves from Section~\ref{sec:ctf}. Using results of subsection~\ref{curve conditions} we can set up a system of equations for the coefficients of an $xw$-symmetric polynomial to enforce tangency or singularity conditions.

Our two approaches in this section mirror the two approaches taken in Section~\ref{constrCTF}, though we present them here in the opposite order. In subsection~\ref{hyper ltf} we use the same tools from Theorem~\ref{linctf} to produce \ltf\ curves. Then in subsection~\ref{hyper blocking} we must first construct blocking sets in the hyperbolic plane before modifying Theorem~\ref{invsingdeg} to produce highly singular \ltf\ hyperbolic curves.

\subsection{Hyperbolic \ltf\ curves with tangency conditions}\label{hyper ltf}

Using the same partial spreads from Lemma~\ref{partspr} we can find tangencies to some of the $xw$-symmetric circles in $\Ovd$ at $\Fq$-points. Because there are more lines, $q^2-1$, than points, $\frac{q^2-q}{2}$, in the hyperbolic plane, a smooth \ltf\ hyperbolic curve must have tangencies at some non-$\Fq$ points, unlike the \ctf\ curves of Theorem~\ref{linctf}.

\begin{thm}\label{hypltf}
    For $d > -3/2+\sqrt{14q^2+10q-95/4}$, there exists a hypersurface $S$ of degree $d$ defining a \ltf\ curve in the hyperbolic plane.
\end{thm}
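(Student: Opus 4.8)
The plan is to mirror the proof of Theorem~\ref{linctf}, using the partial spread $\mathcal{L}$ of Lemma~\ref{partspr}, but now working inside the subspace $\mathcal{H}(d)$ of $xw$-symmetric polynomials and accounting for the mismatch between the number of hyperbolic lines ($q^2-1$) and the number of hyperbolic points ($\frac{q^2-q}{2}$). First I would set up the tangency conditions: for each hyperbolic line, i.e. each $xw$-symmetric $\Fq$-circle other than $B$, I want the curve $C_f$ to be tangent to it at some point. By Lemma~\ref{bundletan}-style reasoning (adapted to the hyperbolic setting via Lemma~\ref{crowe} and Definition~\ref{hyptangline}), imposing tangency to one hyperbolic line at a point $P$ forces tangency to a whole pencil of $xw$-symmetric circles through $P$ and $P^*$, so I should think of the conditions as being indexed by hyperbolic points and the lines through them, and use a partial-spread-type argument to guarantee that distinct tangency points cover distinct hyperbolic lines.

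The key counting step is this: a partial spread $\mathcal{L}$ from Lemma~\ref{partspr} has one tangent line at each of the $q^2+1$ $\Fq$-points of $\Ovd$; of these, $q+1$ lie on the base circle $B$ and must be discarded (points of $B$ are not hyperbolic points), leaving $q^2-q$ usable $\Fq$-tangent lines. Each such $\Fq$-point $P\notin B$ comes paired with $P^*$ — but $P$ and $P^*$ are generally distinct $\Fq$-points both carrying spread lines, so each hyperbolic point is covered by (generically) two spread lines, and each spread line $L_P$ lies in $q+1$ $xw$-symmetric circles through $P$, of which the ones also passing through $P^*$ are hyperbolic lines. This accounts for some but not all of the $q^2-1$ hyperbolic lines, so I must supplement the $\Fq$-tangencies with tangencies at non-$\Fq$-points (necessarily of small degree $r$), invoking Remark~\ref{deg-r} so that each degree-$r$ tangency costs $2r$ linear conditions on the coefficients in $\mathcal{H}(d)$. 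Then by Proposition~\ref{codims} (valid in $\mathcal{H}(d)$ by Remark~\ref{same for hyp}) the total codimension of the solution space is at most twice the total degree-weighted count of tangency points, which I would bound by something on the order of $2(q^2-1)$ plus a correction for the non-$\Fq$ points; combined with the dimension formula for $\mathcal{H}(d)$ from Lemma~\ref{dims} and the dimension $\binom{d+1}{3}$ of multiples of $\varphi$ (Lemma~\ref{ovoidspace}, also valid in $\mathcal{H}(d)$), I get a nonzero $xw$-symmetric $f$ not divisible by $\varphi$ once $d$ satisfies the stated inequality.

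The inequality $\dim\mathcal{H}(d) - (\text{codim}) > (\text{dim of }\varphi\text{-multiples in }\mathcal{H}(d))$ is where the precise constant $14q^2+10q-95/4$ and the shift $-3/2$ emerge; since $\dim\mathcal{H}(d)-\dim(\varphi\cdot\mathcal{H}(d-2))$ is roughly quadratic in $d$ with leading behaviour like $\frac14(d+c)^2$ for an appropriate $c$ (contrast with the clean $(d+1)^2$ in the full space $\mathcal{F}(d)$), solving for $d$ produces the square-root expression with a half-integer shift. I would carry out this algebra carefully, treating the even and odd $d$ cases from Lemma~\ref{dims} uniformly by taking the weaker of the two bounds, and checking that the floor/ceiling discrepancies do not affect the final statement.

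The main obstacle I expect is the combinatorial bookkeeping of which hyperbolic lines are already forced to be tangent to $C_f$ once the $\Fq$-spread tangencies are imposed, and hence exactly how many extra non-$\Fq$ tangency points of what degrees are needed: I need a hyperbolic analog of Lemma~\ref{bundletan} and Lemma~\ref{spread-ctf} guaranteeing that the $xw$-symmetric circles tangent to the spread lines $L_P$ for $P\notin B$ are pairwise distinct hyperbolic lines, and a clean count of the deficit $q^2-1$ minus (lines covered), together with a choice of supplementary points whose Frobenius orbits have small total size. Getting that deficit-and-supplement count to come out to exactly the weight that yields $14q^2+10q-95/4$ — rather than merely $O(q^2)$ — is the delicate part; everything else is a routine transcription of Section~\ref{constrCTF} into the $xw$-symmetric setting.
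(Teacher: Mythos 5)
Your outline follows the same route as the paper --- spread-induced tangencies at $\Fq$-points, supplemented by tangencies at non-$\Fq$-points, counted in $\mathcal{H}(d)$ against the $xw$-symmetric multiples of $\varphi$ --- but two things keep it from being a proof. First, the structural claim that imposing tangency at $P$ ``forces tangency to a whole pencil of $xw$-symmetric circles through $P$ and $P^*$'' is false. Of the $q$ circles tangent to $L_P$ at $P$ (Lemma~\ref{bundletan}), exactly \emph{one} is $xw$-symmetric: the $q+1$ circles through $P$ and $P^*$ are pairwise non-tangent (Lemma~\ref{circtangency}), so their tangent lines at $P$ are distinct and only one equals $L_P$ (this is the observation preceding Definition~\ref{hyptangline}). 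Hence each $\Fq$-tangency buys exactly one hyperbolic line; the paper accordingly uses one point from each of the $(q^2-q)/2$ pairs $\{P,P^*\}$ in $\Ovd(\Fq)\setminus B$ to cover $(q^2-q)/2$ distinct hyperbolic lines (distinct because the spread lines are pairwise disjoint), not the larger collection your pencil argument suggests.

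Second, and more seriously, the entire quantitative content of the theorem lives in the step you explicitly defer: how many supplementary non-$\Fq$ tangency points are needed and of what degree. The paper's resolution: each of the remaining $(q^2+q-2)/2$ $xw$-symmetric circles receives one tangency point chosen, via the parametrization $(s^2+st+\gamma t^2:s:t:1)$ with $s\in\mathbb{F}_{q^3}\setminus\Fq$, to be defined over $\mathbb{F}_{q^6}$; such points are automatically distinct across circles because two circles meet in a line section of $\Ovd$ and so cannot share a point of degree greater than $2$. Each costs $2\cdot 6=12$ conditions by Remark~\ref{deg-r}, giving a total codimension of $(q^2-q)+6(q^2+q-2)=7q^2+5q-12$. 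Combined with $\dim\mathcal{H}(d)-\dim\mathcal{H}(d-2)=(d^2+3d+2)/2$ (leading coefficient $1/2$, not the $1/4$ you guessed; and note the space of $xw$-symmetric multiples of $\varphi$ has dimension $\dim\mathcal{H}(d-2)$, not $\binom{d+1}{3}$), this yields $d^2+3d+2>14q^2+10q-24$, which is exactly the stated bound. Without fixing the degree of the supplementary points and verifying their distinctness, the constant $14q^2+10q-95/4$ cannot be recovered, so as written the argument establishes only an $O(q)$ bound in spirit rather than the theorem as stated.
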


\begin{proof}
    We will use the same strategy as in Theorem \ref{linctf}. From Lemma~\ref{partspr} we have a partial spread $\mathcal{L}$ of $\Pp^3$ consisting of $q^2+1$ lines tangent to $|Ovd$. For each $P\in\Ovd(\Fq)$ there is a line $L_P\in\mathcal{L}$ tangent to $\Ovd$ at $P$.
    
    For each of the $(q^2-q)/2$ pairs $\{P,P^*\}$ in $\Ovd(\Fq)\backslash B$ defining points of a hyperbolic plane, choose one point in the pair, $P$. There is a unique $xw$-symmetric circle in $\Ovd$ tangent to $L_P$ at $P$, thus defining a hyperbolic line $\ell_P$. This of course only accounts for $(q^2-q)/2$ of the $q^2-1$ hyperbolic lines. For the remaining $(q^2+q-2)/2$ $xw$-symmetric circles $\ell$, we must select a non-$\Fq$ point $P_\ell$ on the circle.

    To be explicit about finding non-$\Fq$ points of low degree on circles, use the parametric description of the points of $\Ovd$ from subsection~\ref{sec:inv}, $(s^2+st+\gamma t^2:s:t:1)$. We can safely ignore $(1:0:0:0)$ as it is certainly an $\Fq$-point. For a circle defined by the linear function $Ax+By+Cz+Dw$, we search for solutions to
    \[
        A(x^2+st+\gamma t^2)+Bs+Ct+D=0.
    \]
    For each choice of $t\in\mathbb{F}_{q^r}$, we can solve for $s\in\mathbb{F}_{q^{2r}}$. To avoid the possibility of having both $s$ and $t$ in $\Fq$, we may simply choose $s\in\mathbb{F}_{q^r}\backslash\Fq$.

    Two planes in $\Pp^3$ intersect in a line, and a line may only intersect $\Ovd$ in at most 2 points. If we choose a point $P=(p_0:p_1:p_2:p_3)\in\Ovd\cap L$ defined over $\mathbb{F}_{q^r}$, then its Galois conjugates $(p_0^q:p_1^q:p_2^q:p_3^q),\ldots,(p_0^{q^{r-1}}:p_1^{q^{r-1}}:p_2^{q^{r-1}}:p_3^{q^{r-1}})$ must all be in $\Ovd\cap L$ which can thus only happen for $r=2$. That is, two circles in $\Ovd$ cannot share a point defined over $\mathbb{F}_{q^r}$ for $r>2$.

    Choosing $s\in\mathbb{F}_{q^3}\backslash\mathbb{F}_{q^2}$, we find a distinct point $P_\ell$ on each $\ell$ defined over $\mathbb{F}_{q^6}$

    Within the space $\mathcal{H}(d)$ of degree $d$ $xw$-symmetric polynomials from subsection~\ref{curve conditions}, consider the conditions imposed by demanding $C_f$ be tangent to each $\ell_P$ at $P$ for the first $(q^2-q)/2$ hyperbolic lines, and tangent to $\ell$ at $P_\ell$ for the remaining $(q^2+q-2)/2$ hyperbolic lines. From Lemma~\ref{tang-cond}, each of the $(q^2-q)/2$ $\Fq$-points contributes 2 linear constraints, for a total of $q^2-q$. From Lemma~\ref{deg-r}, each of the remaining $(q^2+q-2)/2$ points $P$ defined over $\mathbb{F}_{q^6}$ contribute 12 constraints, for a total of $6(q^2+q-2)$. Thus Proposition~\ref{codims} says the space of $xw$-symmetric polynomials of degree $d$ satisfying these constraints has dimension at least
    \[
        dim(\mathcal{H}(d)) - (q^2-q) - 6(q^2+q-2) = dim(\mathcal{H}(d)) - (7q^2+5q-12).
    \]

    But since $\Ovd$ is $xw$-symmetric, multiplying its defining polynomial $\varphi$ by any degree $d-2$ $xw$-symmetric polynomial yields a degree $d$ $xw$-symmetric surface that contains $\Ovd$. To avoid this, we simply need
    \[
        dim(\mathcal{H}(d)) - (7q^2+5q-12) > dim(\mathcal{H}(d-2)).
    \]
    Fortunately, whether $d$ is even or odd, $dim(\mathcal{H}(d))-dim(\mathcal{H}(d-2)) = (d^2+3d+2)/2$. Thus a solution to our problem exists as long as
    \[
        d^2+3d+2 > 14q^2+10q-24.
    \]
    The larger root (in $d$) to the quadratic $d^2+3d-(14q^2+10q-26)$ is $-3/2+\sqrt{14q^2+10q-95/4}$, so the hypothesis of the theorem suffices to produce polynomials that define \ltf\ hyperbolic curves.
\end{proof}

Next, we will find an analogous result using the familiar idea of placing a singularity at every point of a blocking set. Thus we must first explore blocking sets in hyperbolic planes.

\subsection{Blocking sets in hyperbolic planes}\label{hyper blocking}

We will first find an upper bound on the size of a blocking set in finite hyperbolic planes, which we will then use to achieve a bound on degrees of \ltf\ hyperbolic curves. We directly modify an argument given in \cite{blocking}, first finding a lower bound for the maximal number of lines which are hit by an $n$-element set of points in a finite hyperbolic plane. 
\begin{defn}
    Given a set $X$ of points in a hyperbolic plane $H$ of even order $q$, define the blocking number of $X$ by
    \[
        \beta(X)\colonequals|\{L : L\cap X\neq\emptyset\}|.
    \]
    We say a line $L$with $L\cap X\neq\emptyset$ is blocked by $X$.
\end{defn}

\begin{lem}\label{hyp-block-stat}
    For every $0\leq n\leq q(q-2)/2$ there exists an $n$ element set of points $X$ such that
    \[
        \beta(X)\geq (q^2-1)\bigg(1-\frac{\binom{q(q-1)/2-n}{q/2}}{\binom{q(q-1)/2}{q/2}}\bigg).
    \]
\end{lem}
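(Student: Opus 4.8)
The plan is to follow the averaging argument from \cite{blocking} adapted to the hyperbolic setting. The key combinatorial data we need from subsection~\ref{hyperbolic algebra} are: there are $(q^2-q)/2$ points and $q^2-1$ lines in the hyperbolic plane, each line contains exactly $q/2$ points, and each point lies on exactly $q+1$ lines. Rather than choosing $X$ directly, I would estimate the \emph{expected} number of blocked lines when $X$ is a uniformly random $n$-element subset of the $(q^2-q)/2 = q(q-1)/2$ points, and then conclude that some particular $X$ achieves at least the expectation.

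First I would fix a line $L$ and compute the probability that $L$ is \emph{not} blocked by a random $n$-set $X$, i.e. that $X$ misses all $q/2$ points of $L$. Since $X$ is a uniformly random $n$-subset of the full point set of size $q(q-1)/2$, this probability is
\[
    \Pr[L\cap X=\emptyset] = \frac{\binom{q(q-1)/2 - q/2}{n}}{\binom{q(q-1)/2}{n}} = \frac{\binom{q(q-1)/2 - n}{q/2}}{\binom{q(q-1)/2}{q/2}},
\]
where the second equality is the standard hypergeometric identity obtained by counting pairs (choose the $q/2$ points of $L$ to avoid, then the remaining $n$ points, versus choose $n$ points then check $L$ is avoided). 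This requires $n \le q(q-1)/2 - q/2 = q(q-2)/2$ for the numerator binomial to make sense, which is exactly the hypothesis on $n$. By linearity of expectation over all $q^2-1$ lines,
\[
    \mathbb{E}[\beta(X)] = (q^2-1)\bigg(1 - \frac{\binom{q(q-1)/2-n}{q/2}}{\binom{q(q-1)/2}{q/2}}\bigg).
\]

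Finally, since $\beta(X)$ takes its expectation as an average over all $n$-subsets, there must exist at least one $n$-element set $X$ with $\beta(X) \ge \mathbb{E}[\beta(X)]$, which is the claimed bound. I do not anticipate a serious obstacle here; the only points requiring care are verifying the exact hyperbolic-plane parameters (number of points $q(q-1)/2$, points per line $q/2$) from the axiom list in subsection~\ref{hyperbolic algebra}, and confirming the binomial identity together with the range of validity of $n$. The subsequent step — translating this blocking-number bound into a statement about actual blocking sets and then into a degree bound for \ltf\ hyperbolic curves via Proposition~\ref{codims} and Remark~\ref{same for hyp} — presumably occurs in the lemmas following this one and mirrors the passage from Lemma~\ref{blockingset} to Theorem~\ref{invsingdeg}.
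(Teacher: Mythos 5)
Your proof is correct, and it reaches exactly the stated bound, but by a genuinely different route from the paper. You take $X$ to be a uniformly random $n$-subset of the $q(q-1)/2$ points, compute $\Pr[L\cap X=\emptyset]=\binom{q(q-1)/2-q/2}{n}/\binom{q(q-1)/2}{n}=\binom{q(q-1)/2-n}{q/2}/\binom{q(q-1)/2}{q/2}$ (the hypergeometric identity you invoke does check out: both sides of the cross-multiplied form equal $N!/(n!\,k!\,(N-n-k)!)$ with $N=q(q-1)/2$, $k=q/2$), apply linearity of expectation over the $q^2-1$ lines, and conclude that some $X$ attains at least the mean. The paper instead builds $X$ greedily: starting from $X=\emptyset$, it counts incidences between points and already-blocked lines to find, at each step, a point lying on at most the average number $\frac{\beta(X)q/2-n(q+1)}{q(q-1)/2-n}$ of blocked lines, adds it, and verifies the stated bound by induction using $\binom{a}{b}=\frac{a}{a-b}\binom{a-1}{b}$. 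The two arguments use the same plane parameters (points $q(q-1)/2$, lines $q^2-1$, $q/2$ points per line, $q+1$ lines per point, all available from Section~\ref{hyperbolic algebra}) and yield identical bounds; yours is a one-shot expectation computation that is shorter and avoids the inductive bookkeeping, while the paper's gives an explicit greedy procedure and a nested chain of sets $X_0\subset X_1\subset\cdots$. Your closing remark about the parameter check and about the downstream use of the bound is also accurate; nothing further is needed.
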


\begin{proof}
    We inductively construct the desired point set $X$, first observing that the claim clearly holds for $n=0,X=\emptyset$. Now if $X$ is an $n$-element set with $n<q(q-2)/2$ satisfying $\beta(X)\geq(q^2-1)\bigg(1-\frac{\binom{q(q-1)/2-n}{q/2}}{\binom{q(q-1)/2}{q/2}}\bigg)$,  we claim there is a point $P\not\in X$ that is contained in at most $\frac{\beta(X)q/2-n(q+1)}{q(q-1)/2-n}$ lines which are already blocked by $X$. Adding this point to $X$ will complete the inductive construction of the sets we desire.
    
    Consider the set of pairs of points and lines
    \[
        Q_X:=\{(P,L) : P\in L, L\cap X\neq\emptyset\}.
    \]
    On the one hand, the cardinality of this set is just the number of lines blocked by $X$ times the number of points on a hyperbolic line, i.e. $\beta(X)q/2$. On the other hand, we have
    \[
        |Q_X|=\sum_{P\in X}(q+1)+\sum_{P\notin X}|\{L : P\in L, L\cap X\neq\emptyset\}|.
    \]
    
    We can now compute the average, over $P\not\in X$, of the number of lines through $P$ blocked by $X$. This average is given by
    \[
        \frac{1}{q(q-1)/2-n}\sum_{P\notin X}|\{L : P\in L, L\cap X\neq\emptyset\}|=\frac{\beta(X)q/2-n(q+1)}{q(q-1)/2-n},
    \]
    and hence there must exist a point $P$ contained in at most $\frac{\beta(X)q/2-n(q+1)}{q(q-1)/2-n}$ blocked lines as we claimed. If we add this point to $X$, we obtain
    \[
        \beta(X\cup\{P\})\geq \beta(X)+(q+1)-\frac{\beta(X)q/2-n(q+1)}{q(q-1)/2-n}.
    \]
    Applying the hypothesis $\beta(X)\geq(q^2-1)\bigg(1-\frac{\binom{q(q-1)/2-n}{q/2}}{\binom{q(q-1)/2}{q/2}}\bigg)$ and the identity $\binom{a}{b} = \frac{a}{a-b}\binom{a-1}{b}$, we can see
    \[
        \beta(X\cup\{P\}) \geq \frac{q^2-2q-2n}{q^2-q-2n}\beta(X) + (q+1) + \frac{n(q+1)}{q(q-1)/2} \geq (q^2-1)\left(1-\frac{\binom{q(q-1)/2-(n+1)}{q/2}}{\binom{q(q-1)/2}{q/2}}\right).
    \]
\end{proof}

This allows us to bound the size of a blocking set in the hyperbolic plane.

\begin{thm}\label{hypblock}
    In a hyperbolic plane of order $q$, there exists a blocking set $X$ of size
    \[
        |X| \leq \left\lceil q(q-1)/2-\sqrt[q/2]{\binom{q(q-1)/2}{q/2}\frac{(q/2)!}{q^2-1}} \right\rceil
    \]
\end{thm}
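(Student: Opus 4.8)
The plan is to show that a sufficiently large set produced by Lemma~\ref{hyp-block-stat} is forced to block every line, and then solve for the threshold size. First I would take the set $X$ of size $n$ guaranteed by Lemma~\ref{hyp-block-stat}, so that the number of \emph{unblocked} lines is at most
\[
    (q^2-1)\frac{\binom{q(q-1)/2-n}{q/2}}{\binom{q(q-1)/2}{q/2}}.
\]
The key observation is that $X$ is a blocking set precisely when this quantity is strictly less than $1$, i.e.\ when there are no unblocked lines at all (the count of unblocked lines is a non-negative integer). So the task reduces to finding the smallest $n$ with
\[
    (q^2-1)\binom{q(q-1)/2-n}{q/2} < \binom{q(q-1)/2}{q/2}.
\]

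Next I would estimate the falling-factorial ratio. Writing $m = q(q-1)/2$ and $k=q/2$, we have $\binom{m-n}{k}/\binom{m}{k} = \prod_{i=0}^{k-1}\frac{m-n-i}{m-i} \leq \left(\frac{m-n}{m}\right)^{k}$ as long as $m-n-i\geq 0$, since each factor $\frac{m-n-i}{m-i}$ is at most $\frac{m-n}{m}$ (decreasing $i$ decreases both numerator and denominator by the same amount, which increases the ratio when the ratio is below $1$). Hence it suffices to require
\[
    (q^2-1)\left(\frac{m-n}{m}\right)^{k} \leq 1,
\]
equivalently $m-n \leq m\,(q^2-1)^{-1/k}$, that is
\[
    n \geq m - m\,(q^2-1)^{-1/k} = m - \left(\frac{m^{k}}{q^2-1}\right)^{1/k}.
\]
Substituting $m^{k} = (q(q-1)/2)^{q/2}$ and recognizing this is essentially $\binom{q(q-1)/2}{q/2}(q/2)!$ up to the falling-factorial correction — here I would instead keep the exact binomial form to get the clean statement, noting $\binom{m}{k}k! = m(m-1)\cdots(m-k+1) \leq m^{k}$, so replacing $m^k$ by $\binom{m}{k}k!$ only strengthens the needed inequality. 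This yields
\[
    n \geq q(q-1)/2 - \sqrt[q/2]{\binom{q(q-1)/2}{q/2}\frac{(q/2)!}{q^2-1}},
\]
and taking the ceiling gives an integer $n$ in the valid range $0\le n\le q(q-2)/2$ for which Lemma~\ref{hyp-block-stat} applies and forces $X$ to be a blocking set.

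The main obstacle I anticipate is the bookkeeping around which binomial/falling-factorial bound to insert: one wants a bound on $\binom{m-n}{k}/\binom{m}{k}$ that is simultaneously tight enough to produce a threshold \emph{linear-in-$q^2$-minus-a-root} (matching the claimed form) and simple enough to invert in closed form. The inequality $\binom{m-n}{k}\le (m-n)^k/k!$ together with $\binom{m}{k} = m(m-1)\cdots(m-k+1)/k!$ is the natural choice, but one must check carefully that $m-n$ stays non-negative over the relevant range and that the ceiling does not push $n$ outside $[0,q(q-2)/2]$ — this requires a mild size estimate on the root, which for $q$ even and not too small is routine since $\sqrt[q/2]{\binom{q(q-1)/2}{q/2}(q/2)!/(q^2-1)}$ is of order $m/(q^2-1)^{2/q}\sim m$, so $n$ is small compared to $m$. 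A secondary check is that the final bound is genuinely an \emph{integer} upper bound on \emph{some} blocking set's size, which follows because Lemma~\ref{hyp-block-stat} guarantees a set of \emph{exactly} $\lceil\,\cdot\,\rceil$ points whose blocking number then equals $q^2-1$.
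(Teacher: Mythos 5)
Your proposal is correct and takes essentially the same approach as the paper: you feed Lemma~\ref{hyp-block-stat} into the integrality condition that the number of unblocked lines be less than $1$, and your two-step estimate via $\left(\tfrac{m-n}{m}\right)^{q/2}$ followed by $\binom{m}{q/2}(q/2)!\le m^{q/2}$ (with $m=q(q-1)/2$) composes to exactly the paper's single inequality $\binom{m-n}{q/2}\le (m-n)^{q/2}/(q/2)!$, giving the identical threshold. The only differences are this cosmetic repackaging and your (welcome) explicit attention to the range check $n\le q(q-2)/2$, which the paper merely asserts.
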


\begin{proof}
    Defining $g(n):=(q^2-1)(1-\frac{\binom{q(q-1)/2-n}{q/2}}{\binom{q(q-1)/2}{q/2}}),$ we have to find $n$ such that $g(n)\geq q^2-1$ or equivalently $g(n)>q^2-2$. Expanding this last inequality and using the inequality
    \[
        \binom{q(q-1)/2-n}{q/2}\leq \frac{(q(q-1)/2-n)^{q/2}}{(q/2)!},
    \]
    we find it suffices to have
    \[
        n > q(q-1)/2-\sqrt[q/2]{\binom{q(q-1)/2}{q/2}\frac{(q/2)!}{q^2-1}}.
    \]
    Let $n$ be the ceiling of this last expression, which is less than $q(q-2)/2$. Then Lemma~\ref{hyp-block-stat} implies there is an $n$-element set $X$ with $\beta(X)\geq g(n)\geq q^2-1$, so $X$ is blocking.
\end{proof}
We can now use this blocking set to create a hyperbolic \ltf\ curve similarly to Theorem~\ref{invsingdeg}.

\begin{thm}\label{hypsingdeg}
    For all $d>-3+\sqrt{12q^2-12q+1-24\sqrt[q/2]{\binom{q(q-1)/2}{q/2}\frac{(q/2)!}{q^2-1}}}$ there exists an $xw$-symmetric polynomial $f$ of degree $d$ defining an \ltf\ hyperbolic curve $C_f$.
\end{thm}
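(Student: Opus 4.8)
The plan is to adapt the proof of Theorem~\ref{invsingdeg} to the hyperbolic setting, replacing the inversive blocking set of Lemma~\ref{blockingset} by the hyperbolic blocking set of Theorem~\ref{hypblock} and working throughout in the space $\mathcal{H}(d)$ of $xw$-symmetric degree-$d$ polynomials rather than in $\mathcal{F}(d)$. Fix a blocking set $X$ of the hyperbolic plane with $|X|\leq\left\lceil q(q-1)/2-\sqrt[q/2]{\binom{q(q-1)/2}{q/2}\frac{(q/2)!}{q^2-1}}\right\rceil$ as guaranteed by Theorem~\ref{hypblock}. Each element of $X$ is a hyperbolic conjugate pair $\{P,P^*\}\subset\Ovd(\Fq)\setminus B$. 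I would arrange for $C_f$ to be singular at $P$ (equivalently, at $P^*$) for each such pair; then every hyperbolic line, i.e. every $xw$-symmetric $\Fq$-circle other than $B$, passes through a point where $C_f$ is singular, hence is tangent to $C_f$ by the convention in Definition~\ref{hyptangline}, so $C_f$ is \ltf.

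First I would count linear conditions. By Lemma~\ref{sing-cond} together with Remark~\ref{same for hyp}, requiring $C_f$ to be singular at a fixed $\Fq$-point of $\Ovd$ cuts out a subspace of codimension at most $3$ in $\mathcal{H}(d)$; and since $f\in\mathcal{H}(d)$ is $xw$-symmetric, singularity of $C_f$ at $P$ is automatically equivalent to singularity at $P^*$, so each of the $|X|$ hyperbolic points costs only $3$ conditions, not $6$. Hence, by Proposition~\ref{codims} (valid in $\mathcal{H}(d)$ via Remark~\ref{same for hyp}), the $f\in\mathcal{H}(d)$ whose curve is singular at every point of $X$ form a subspace of dimension at least $\dim\mathcal{H}(d)-3|X|$. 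To discard the degenerate solutions with $\varphi\mid f$ (for which $C_f=\Ovd$), I would again use Remark~\ref{same for hyp} and Lemma~\ref{ovoidspace}: since $\Ovd$ is $xw$-symmetric, the $xw$-symmetric degree-$d$ multiples of $\varphi$ form a space of dimension $\dim\mathcal{H}(d-2)$. So a genuine hyperbolic curve with the desired singularities exists provided $\dim\mathcal{H}(d)-3|X|>\dim\mathcal{H}(d-2)$.

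The remaining step is arithmetic. Exactly as in the proof of Theorem~\ref{hypltf}, $\dim\mathcal{H}(d)-\dim\mathcal{H}(d-2)=(d^2+3d+2)/2$ regardless of the parity of $d$, so the inequality becomes $(d+1)(d+2)>6|X|$. Substituting the bound on $|X|$ from Theorem~\ref{hypblock}, bounding the ceiling crudely by $\lceil x\rceil\le x+1$, and solving the resulting quadratic inequality in $d$ produces a degree bound of the stated shape; the displayed hypothesis on $d$ in the theorem statement then suffices. Any $f$ in the solution space that is not a multiple of $\varphi$ defines an $xw$-symmetric curve $C_f\subsetneq\Ovd$ that is singular at every point of $X$, hence tangent to every hyperbolic line, hence \ltf, which proves the theorem.

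I expect the only real subtleties to be the two bookkeeping points just noted: (i) confirming that a surface-level singularity of $S_f$ at an $\Fq$-point $P\in\Ovd$ genuinely forces $C_f$ to be tangent to \emph{every} $xw$-symmetric circle through $P$ (this is precisely where the convention $T_PC=\Ovd$ at non-smooth points in Definition~\ref{hyptangline} does the work), and (ii) making sure $xw$-symmetry lets each conjugate pair be blocked for the price of a single point's worth of conditions, so the final codimension is $3|X|$ and not $6|X|$. Neither is difficult, but together they are what keep the degree bound linear in $q$ (up to the roughly $q\ln q$-sized blocking set) rather than quadratic.
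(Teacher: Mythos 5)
Your proposal is correct and follows essentially the same route as the paper: impose a singularity at each point of the hyperbolic blocking set from Theorem~\ref{hypblock}, count $3$ conditions per point in $\mathcal{H}(d)$ via Remark~\ref{same for hyp}, subtract the multiples of $\varphi$ using $\dim\mathcal{H}(d)-\dim\mathcal{H}(d-2)=(d^2+3d+2)/2$, and solve the resulting quadratic in $d$. Your two bookkeeping remarks (that $xw$-symmetry makes singularity at $P$ and at $P^*$ one condition's worth, and that a singular point is tangent to every circle through it) are exactly the points the paper uses implicitly, so nothing is missing.
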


\begin{proof}
    We create a hyperbolic-line-transverse-free curve by enforcing a singularity at each point of a hyperbolic blocking set $\mathcal{B}$. As discussed in Remark~\ref{same for hyp}, each singularity incurs 3 linear constraints on the coefficients of a polynomial. As in all previous such arguments, we must also ensure that the space of polynomials satisfying our singularity conditions also includes polynomials $f$ so that $\Ovd\not\subset S_f$. In the space $\mathcal{H}(d)$ of degree $d$, $xw$-symmetric polynomials, there are some satisfying all of our constraints as long as
    \[
        dim(\mathcal{H}(d)) - 3|B| > dim(\mathcal{H}(d-2).
    \]
    As in the proof of Theorem~\ref{hypltf}, $dim(\mathcal{H}(d))-dim(\mathcal{H}(d-2)) = (d^2+3d+2)/2$, so it suffices to have
    \[
        (d^2+3d+2)/2 - 3|\mathcal{B}|>0.
    \]

    The above inequality is certainly satisfied if $d>-3+\sqrt{1+24|\mathcal{B}|}$. Using a blocking set $\mathcal{B}$ from Theorem~\ref{hypblock}, we find a hyperbolic \ltf\ curve of degree
    \[
        d>-3+\sqrt{12q^2-12q+1-24\sqrt[q/2]{\binom{q(q-1)/2}{q/2}\frac{(q/2)!}{q^2-1}}}
    \]
    as yearned for.
\end{proof}

\begin{rmk}\label{hyp-simp}
    Numerical data and limits computed in Mathematica suggests that
    \[
        q(q-1)/2-\sqrt[q/2]{\binom{q(q-1)/2}{q/2}\frac{(q/2)!}{q^2-1}} \leq 2q\ln(q).
    \]
    If this inequality holds then we may simplify (if weaken) the statement of Theorem~\ref{hypblock} to say there is a blocking set of size at most $\lceil 2q\ln(q)\rceil$, and the statement of Theorem~\ref{hypsingdeg} to say degree $d$ polynomials exist defining \ltf\ hyperbolic curves for all $d>-3+\sqrt{1+48q\ln(q)}$.
\end{rmk}

\bibliographystyle{plain}

\end{document}